\documentclass[11pt,reqno]{amsart}
\usepackage{amsgen, amstext,amsbsy,amsopn, amsthm, amsfonts,amssymb,amscd,amsmath,euscript,enumerate,url,verbatim,calc,xypic,tikz}
 \usepackage{xcolor}
\usepackage{amsgen, amstext,amsbsy,amsopn, amsthm, amsfonts,amssymb,amscd,amsmath,euscript,enumerate,url,verbatim,calc,xypic,tikz}
\usepackage{amsfonts}
\usepackage{amssymb} 
\usepackage{graphicx}
\usepackage{mathrsfs}  
\usepackage{hyperref}
\usepackage{MnSymbol}
\usepackage{pgfkeys}
\usepackage{mathtools}
\usepackage{hyperref}
\usepackage{MnSymbol}
\usepackage{pgfkeys}
\usepackage{mathtools}
\def\multiset#1#2{\ensuremath{\left(\kern-.3em\left(\genfrac{}{}{0pt}{}{#1}{#2}\right)\kern-.3em\right)}}

\usetikzlibrary{arrows}

\oddsidemargin -0.2cm
\evensidemargin -0.2cm
\textwidth 5.8in
\parindent 0in

\usepackage{latexsym}
\usepackage{graphics}
\usepackage{color}
\usepackage{lastpage}
\usepackage{fancyhdr}
\usepackage{multirow}
\allowdisplaybreaks
\usepackage{graphicx}
\graphicspath{ {F:/IMAGES/} }

\newcommand{\m}{\mathfrak{m} }

\newcommand{\Ass}{\operatorname{Ass}}

 \newcommand{\lcm}{\operatorname{lcm}}

\newcommand{\sdefect}{\operatorname{sdefect}}
\newcommand{\sdefectideal}{\operatorname{sdefectideal}}

\parindent 0in
\newcommand{\proset}{\,\mathrel{\lower 4pt\hbox{$\scriptscriptstyle/$}
		\mkern -14mu\subseteq }\,} 

\newtheorem{theorem}{Theorem}[section]
\newtheorem{corollary}[theorem]{Corollary}
\newtheorem{lemma}[theorem]{Lemma}

\newtheorem{notation}[theorem]{Notation}
\usepackage{amsmath}

\theoremstyle{definition}

\newtheorem{remark}[theorem]{Remark}
\newtheorem{definition}[theorem]{Definition}

\newtheorem{example}[theorem]{Example}
\title[Symbolic defects of edge ideals  of unicyclic graphs] {Symbolic defects of edge ideals  of unicyclic graphs}
\author[ M. Mandal and D.K. Pradhan ]{Mousumi Mandal$^*$ and Dipak Kumar Pradhan}
 \thanks{$^*$ Supported by SERB(DST) grant No.: $\mbox{EMR}/2016/006997$, India}        
\thanks{AMS Classification 2010: 13F20, O5C25}
\address{Department of Mathematics, Indian Institute of Technology Kharagpur, 721302, India} \email{mousumi@maths.iitkgp.ac.in}
\address{Department of Mathematics, Indian Institute of Technology Kharagpur, 721302, India}\email{dipakkumar@iitkgp.ac.in}

\begin{document}
\maketitle
\begin{abstract}
 
 We introduce the concept of minimum edge cover for an induced subgraph in a graph.
 Let $G$ be a unicyclic graph with a unique odd cycle and $I=I(G)$ be its edge ideal. We compute the exact values of all symbolic defects of $I$ using the concept of minimum edge cover for an induced subgraph in a graph. We describe one method to find the quasi-polynomial associated with the symbolic defects of edge ideal $I$. We classify the class of unicyclic graphs  when some power of maximal ideal annihilates $ I^{(s)}/I^s $ for any fixed  $ s $. Also for those class of graphs, we compute the Hilbert function of the module $I^{(s)}/I^s$ for all $s.$  
 \vspace*{0.2cm}\\
 \textbf{Keywords:} Unicyclic graph, edge ideal, minimum edge cover, symbolic defect, Hilbert function.\\
\end{abstract}
\section{Introduction}  
Let $k$ be a field and $R=k[x_1,\ldots ,x_n]$ be a polynomial ring in $n$ variables. Let $I$ be a homogeneous ideal of $R$. 
Then for $s\geq 1$, the $s$-th symbolic power of $I$ is defined as $I^{(s)}=\displaystyle{\bigcap_{p\in \Ass I}(I^sR_p\cap R)}$. If minimal generators of $I$   are given, then we can easily understand the minimal generators of ordinary powers of $I,$ but it is still difficult many times to understand the minimal generators of symbolic powers of $I.$  Geometrically, one classical result of  Zairiski and Nagata says that the $s$-th symbolic power of a radical ideal consists of the elements that vanish up to order $s$ on the corresponding variety.  By definition, $I^s\subseteq I^{(s)}$ holds for all $s\geq 1$. The opposite containment is not true in general. In  \cite{ggsv}, F. Galetto et al.  have introduced an invariant known as the symbolic defect, that counts the number of generators which must be added to $I^s$ to make $I^{(s)}$.  The
 $ s $-th symbolic defect of $I$ is defined as $$ \sdefect(I,s) = \mu\left(I^{(s)}/I^s\right),$$ where $ \mu\left(I^{(s)}/I^s\right)$ denotes  the number of minimal generators of the module $ I^{(s)}/I^s.$ In other words, it is the measure of failure of $I^s$ to be  equal with $I^{(s)}.$ 
If $I^{(s)} \neq I^s ,$ J. Herzog 
asked the following questions  in \cite{herzog}:     
\begin{enumerate}
	\item  What is the number of generators of $ I^{(s)}/I^s ~   ?$
	
    \item What is the annihilator of $ I^{(s)}/I^s,$ and how does the exponent $\gamma(s)$ for which
    $ \m ^{ \gamma(s)}. (I^{(s)}/I^s) = 0 $ depend on $  s~   ?$    
	
\end{enumerate}  
In this paper, we are able to give complete answers of questions $(1)$ and  $(2)$ if $I$ is the edge ideal of a unicyclic graph with a unique odd cycle.
 In   \cite{ggsv}, F. Galetto et al. give an understanding of the symbolic defect of $I$ when $I$ is either the defining ideal of a star configuration or the ideal associated with a finite set of points in $ \mathbb{P}^2. $  If  $  I $ is an ideal whose symbolic Rees algebra is Noetherian, B. Drabkin and L. Guerrieri in   \cite{drabkin} prove that $ \sdefect(I,s) $ is eventually a quasi-polynomial as a function of $ s.$   They compute the symbolic defects explicitly for the cover ideals of complete graphs and odd cycles. Also, they find the quasi-polynomial associated with the symbolic defects of the cover ideal of a complete graph.
 
Our approach to compute $ \sdefect(I,s)$, is to find the number of minimal generators of ideal $ D(s) $ such that
$ I^{(s)} =  I^s + D(s) 
.$ If we consider $I$ to be the edge ideal of a bipartite graph $G,$ then by \cite[Theorem 5.9]{simis},  we have $I^{(s)} = I^s $ and hence $D(s)=0.$  But it is still  difficult to compute the number of minimal generators of $D(s)$ if $G$ contains an induced odd cycle.

Recently,  the  invariants associated with the symbolic powers of  edge ideals of some simple graphs  have been studied in \cite{bidwan,Y.GU,janssen}. M. Janssen et al. compute the $ \sdefect(I, s) $ for $n+1 \leq s \leq 2n+1,$ where $I$ is the edge ideal of an odd cycle $C_{2n+1}$ in \cite{janssen}. In  \cite{bidwan}, B. Chakraborty and M. Mandal
give an upper bound for the $ \sdefect(I, s) $ for $n+1 \leq s \leq m+1,$  where $I$ is the  edge ideal of a clique sum of two odd cycles    $ C_{2n+1}=(x_1,\ldots,x_{2n+1})   $ and  $ C_{2m+1}=(x_1,y_2,\ldots,y_{2n+1}) $  joined at  $x_1$ with $n  <   m$. In this paper, we give the exact value of $ \sdefect(I, s) $ for all $ s ,$ where $I$ is the edge ideal of a unicyclic graph with a unique odd cycle, using the concept of minimum edge cover for an induced subgraph in a graph.
\vspace*{0.1cm}\\
In \cite{arsie}, A. Arsie and J. E. Vatne compute the Hilbert function of the module $I^{(s)}/I^s,$ where $I$ is an ideal associated to  $n+1$ general points in $ \mathbb{P}^n .$ We compute the Hilbert function of the module $I^{(s)}/I^s$  where $I$ is the edge ideal of a unicyclic graph with a unique odd cycle such that some power of maximal ideal annihilates $ I^{(s)}/I^s $.
     
In Section 2, we recall all the definitions and results which will be required for the rest of the paper. In Section 3, we define the notion of minimum edge cover for an induced subgraph in a graph.  For $G$ to be a graph containing an induced odd cycle $C_{2n+1},$  we give the structure of each minimum edge cover for $C_{2n+1}$ in $G$. Also, we compute the number of possible edge sets of size $n+1+k$ for $k \geq 0$ containing a  minimum edge cover for $C_{2n+1}$ in $G$ (see Lemma \ref{type1} and Lemma \ref{type2}).  

In Section 4, we compute all the symbolic defects of the edge ideal of a unicyclic graph with a unique odd cycle in Theorem \ref{sd.unicyclic}. In Remark \ref{sd.unicyclic.2}, we give one procedure to find the quasi-polynomial associated with the symbolic defects of edge ideal of a unicyclic graph with a unique odd cycle.

In Section 5,  if $I$ is the edge ideal of a unicyclic graph with a unique odd cycle, we categorize the class of unicyclic graphs when some power of maximal ideal annihilates $ I^{(s)}/I^s$ in Theorem \ref{annhilator}.
Additionally, for the same class of graphs,  we compute the Hilbert function of the module $I^{(s)}/I^s$  in  Theorem \ref{hilbert.thm}.

\section{Preliminaries} 
In this section,  we recall some definitions and results that will be used throughout the paper. Let $G$ denotes a finite simple graph over the vertex set $V(G)$ and edge set $E(G).$
\begin{definition}
	 A subgraph $ H $ of $ G $ is called an induced subgraph if $ \{u,v\} \in E(G) $
implies $ \{u,v\} \in E(H)  $ for all vertices $ u $ and $ v $ of $ H. $ 
\end{definition}
\begin{definition}
For a vertex $ u $ in a graph $ G , $ let $ N_G(u) := \{v \in V(G) ~|~ \{u,v\} \in E(G)\} $ be the set of
neighbours of $ u   $ and set $N_G[u] := N_G(u)\cup \{u\}. $  For a subset of the vertices
$ W \subseteq  V (G),  $ $ N_G(W) $ and $ N_G[W] $ are defined similarly.   If $ |N_G(u)| = 1, $ then $ u $ is called a leaf of $ G.$ 
For a subset $W \subseteq V(G) $ of the vertices in $ G, $ define $ G \setminus  W $ to
be the subgraph of $ G $   with the vertices in $ W $ (and their incident edges) deleted.  If
$ W = \{u\} $ consists of a single vertex, we write $ G \setminus u $ instead of $ G \setminus \{u\}. $ For a vertex $ u $ in  $ G $, let  $ N_{E(G)}(u) := \{\{u,v\} \in E(G) ~|~ v \in N_G(u)\} $, i.e.,   $ N_{E(G)}(u)$  is the set of
all the edges of $G$ connected to the vertex $ u $. For a subset $Y \subseteq E(G) $ of the edges in $ G, $ define $ G \setminus  Y $ to
be the subgraph of $ G $ with the  edge set $E(G)  \setminus  Y$  and  vertex set  $V(G)  \setminus  V_Y$  where  $V_Y = \{u \in V(G)~|~  N_{E(G)}(u)  \subseteq  Y   \}$.       
\end{definition}        

\begin{definition} 
Let $G$ be a simple graph on $n(G)$ vertices.    
\begin{enumerate}
\item An edge cover of $ G  $ is a set $ L \subseteq E(G)$   such that every vertex of $ G $   is incident to some edge of $ L. $
\item  An  edge cover of $G$ is minimal if no proper subset of it is an edge cover of $G.$            
\item A minimum edge cover of $G$ is a minimal edge cover of $G$ of the smallest possible size.    
So every minimum edge cover is a minimal edge cover but the converse need not be true.
\item The size of a minimum edge cover in a graph $ G $ is known as the edge cover number of $ G, $ denoted by  $ \beta^\prime(G). $
\item A collection of edges $ \{e_1, \ldots , e_t\} \subseteq E(G)$ for some $t \geq 1$ is called a matching if they are pairwise
disjoint.  Let $ M $ be a matching of $ G. $  A set $ S $ of vertices
in $ G $ is saturated by $ M $ if every vertex of $ S $ is in an  edge of  $ M. $  A set $ U $ of vertices
in $ G $ is unsaturated by $ M $ if no vertex of $ U $ is in 
an edge of  $ M. $  The maximum matching in $G$ is a matching of largest size and it is denoted by
 $\alpha^\prime(G).$  By \cite[Theorem 3.1.22]{west},  it is  known that $   \beta^\prime(G) = n(G)- \alpha^\prime(G)  $.
\end{enumerate} 
\end{definition}

\begin{notation}
	The number of $k$-element multisets  on $ n $ elements is  termed as ``$ n $ multichoose $ k,$'' denoted by $ \displaystyle{   \multiset{n}{ k}       }  .$ Here $ n $ multichoose $ k$ is given by the simple formula $$ \displaystyle{   \multiset{n}{ k} = \binom{n+k-1}{k}      }  $$	
and we assume     $\displaystyle{\multiset{n}{k}=0}$ if $k < 0 $. 	
	
\end{notation}
Let $ k $ be a field and  $ R = k[x_1,\ldots, x_n] $ be a standard graded polynomial ring over
$ n $ variables. 	The	degree of a monomial $f \in  R$ is denoted by $\deg(f).$ 
\begin{definition}    
	
	Let $I \subset R$ be a monomial ideal. Let $ \mathcal{G}(I) $ be the set of minimal generators of the ideal $I.$		
	Let $D(s)$ be the ideal we 
	need to add to $I^s $ to achieve $ I^{(s)} ,$ i.e.,
	$ I^{(s)} = I^s + D(s).$
	\begin{enumerate}
		\item We set $ \sdefectideal (I^{(s)}) = D(s) .$ Then we can write $$ I^{(s)} = I^s + \sdefectideal (I^{(s)}) .$$
		\item We set    $ \sdefect(I,s) $ is the number of elements of $\mathcal{G}(D(s))$, i.e.,
		$$ \sdefect(I,s)= |\mathcal{G}(\sdefectideal (I^{(s)}) ) |.$$  
	\end{enumerate}
	
\end{definition}

\begin{definition}
The edge ideal of $ G $ is defined to be
$$ I(G) = \left\langle uv ~|~ \{u, v\} \in E(G)  \right\rangle  \subset  R .$$
If $L \subseteq E(G)$ is an edge set, then $\{ uv ~|~ \{u, v\} \in  L     \}$ is the set of minimal generators of $I(G)$ corresponding to that edge set.    
\end{definition}

In order to describe the symbolic defects of edge ideal of a unicyclic graph  and the quasi-polynomial associated  with the  symbolic defects, we  used the following two results.  
\begin{lemma}\cite[Theorem 3.4]{Y.GU}\label{unicyclic}
	Let $ G $ be a unicyclic graph with a unique cycle $ C_{2n+1} = (x_1, \ldots , x_{2n+1}), $ and
	let $ I = I(G) $ be its edge ideal. Let $ s \in \mathbb{N} $ and write $ s = k(n + 1) + r $ for some $ k \in \mathbb{Z} $ and
	$ 0 \leq r \leq n. $ Then   
	$${I}^{(s)} =   \displaystyle{ \sum_{t=0}^{k}  {I}^{s-t(n+1)}  (x_1 \cdots x_{2n+1})^t  }. $$ 
	
\end{lemma}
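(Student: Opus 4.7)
The plan is to prove the two inclusions separately, with the crucial ingredient being that the vertex cover number of $C_{2n+1}$ is $n+1$, so that every minimal vertex cover of $G$ hits the odd cycle in at least $n+1$ places.

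For the inclusion $\supseteq$, I would first establish that $x_1 \cdots x_{2n+1} \in I^{(n+1)}$. Writing $I^{(n+1)} = \bigcap_{P} P^{n+1}$ over the minimal primes $P$ of $I$ (which correspond to minimal vertex covers of $G$), any such cover $C$ must contain at least $n+1$ of the vertices $x_1,\dots,x_{2n+1}$, so $x_1\cdots x_{2n+1} \in P^{n+1}$ for every minimal prime. Raising to the $t$-th power gives $(x_1\cdots x_{2n+1})^t \in I^{(t(n+1))}$, and combining with $I^{s-t(n+1)} \subseteq I^{(s-t(n+1))}$ and the standard containment $I^{(a)}I^{(b)} \subseteq I^{(a+b)}$, each summand on the right-hand side lies in $I^{(s)}$.

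For the reverse inclusion $\subseteq$, the cleanest route is through the symbolic Rees algebra $\mathcal{R}_s(I) = \bigoplus_{s\geq 0} I^{(s)} t^s$. A structural theorem for edge ideals of graphs (due to Herzog--Hibi--Trung, building on Simis--Vasconcelos--Villarreal) asserts that $\mathcal{R}_s(I)$ is generated as an $R$-algebra by $\{e \cdot t : e \in E(G)\}$ together with monomials $\bigl(\prod_{v \in V(C')} v\bigr)\cdot t^{(|C'|+1)/2}$ for each induced odd cycle $C'$. Since $G$ is unicyclic with a unique odd cycle, the only additional generator is $(x_1\cdots x_{2n+1})\, t^{n+1}$, so $\mathcal{R}_s(I) = R\bigl[It,\, (x_1\cdots x_{2n+1})\, t^{n+1}\bigr]$. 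Extracting the graded piece of degree $s$ in $t$ and grouping monomials by the exponent $t$ on the cycle generator yields exactly the claimed decomposition, with $t$ ranging from $0$ to $k = \lfloor s/(n+1) \rfloor$.

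Alternatively, one can argue directly on monomials. Given a monomial $m \in I^{(s)}$, let $t$ be the largest integer such that $(x_1\cdots x_{2n+1})^t \mid m$, and set $m' = m/(x_1\cdots x_{2n+1})^t$; then one must show that $m' \in I^{s-t(n+1)}$, adjusting $t$ down to $k$ in the case $t > k$ by absorbing surplus copies of $x_1\cdots x_{2n+1}$ into $m'$. Verifying the membership $m' \in I^{s-t(n+1)}$ requires translating the vertex-cover inequality $\sum_{x_i \in C} a_i(m) \geq s$ (valid for every minimal vertex cover $C$) into the existence of a decomposition of $m'$ as a product of $s - t(n+1)$ edges; the main obstacle is handling the coupling between the exponents on the cycle vertices and those outside the cycle, which requires a König-type argument applied to the bipartite subgraph obtained after stripping the cycle. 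For that reason I would strongly prefer the symbolic Rees algebra route, which black-boxes the combinatorial difficulty into a single appeal to the generation theorem.
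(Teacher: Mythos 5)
First, a point of comparison: the paper does not prove this lemma at all --- it is imported verbatim as \cite[Theorem 3.4]{Y.GU}, so there is no internal proof to measure your attempt against, and it must stand on its own. Your inclusion $\supseteq$ is fine: for the squarefree monomial ideal $I$ one has $I^{(s)}=\bigcap_{C}P_C^{\,s}$ over minimal vertex covers $C$, every minimal vertex cover of $G$ contains at least $n+1$ of the cycle vertices, hence $x_1\cdots x_{2n+1}\in I^{(n+1)}$, and $I^{(a)}I^{(b)}\subseteq I^{(a+b)}$ finishes that direction.

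The genuine gap is in $\subseteq$. The generation theorem you propose to black-box --- that $\mathcal{R}_s(I(G))$ is generated by $\{e\,t : e\in E(G)\}$ together with $\bigl(\prod_{v\in V(C')}v\bigr)t^{(|C'|+1)/2}$ over induced odd cycles $C'$ --- is not a correct general statement about edge ideals, and it is not what Herzog--Hibi--Trung prove (their theorem concerns vertex cover algebras, i.e.\ symbolic Rees algebras of \emph{cover} ideals). Concretely, for $I=I(K_5)$ the minimal vertex covers are the $4$-element vertex subsets, so $x_1\cdots x_5\in P^4$ for every minimal prime and hence $x_1\cdots x_5\in I^{(4)}$; but the only induced odd cycles of $K_5$ are triangles, and every element of $t$-degree $4$ in the subalgebra generated by edges and triangles has monomial degree at least $6$, so $x_1\cdots x_5\,t^4$ is an extra generator not accounted for by your description. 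The description by induced odd cycles does hold for unicyclic graphs, but that fact is essentially the lemma you are asked to prove (it is the content of \cite[Theorem 3.4]{Y.GU}), so appealing to it is circular. Your fallback direct argument is where all the real work sits: one must convert the system of inequalities $\sum_{x_i\in C}a_i(m)\geq s$ over all minimal vertex covers into an actual factorization of $m/(x_1\cdots x_{2n+1})^t$ into $s-t(n+1)$ edges, and you explicitly leave this undone; note too that the maximal $t$ with $(x_1\cdots x_{2n+1})^t\mid m$ is not obviously the exponent that works, so even the reduction step needs justification. As written, the inclusion $I^{(s)}\subseteq\sum_{t=0}^{k}I^{s-t(n+1)}(x_1\cdots x_{2n+1})^t$ is not established.
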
       

\begin{lemma}\cite[Theorem 2.4]{drabkin}\label{quasi}  
Let $ R $ be a Noetherian local or graded-local ring with maximal (or homogeneous maximal) ideal $ \m $ and residue field $ R/\m = k. $ Let $ I $ be an ideal (or homogeneous ideal)
of $ R $ such that $ R_s(I) $ is a Noetherian ring, and let $ d_1,\ldots, d_r $ be the degrees of the generators
of $ R_s(I) $ as an $ R- $algebra. Then $ \sdefect(I, s) $ is eventually a quasi-polynomial in $ s $ with
quasi-period $ d =$    $\lcm(d_1,\ldots , d_r).  $     
\end{lemma}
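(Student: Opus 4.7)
The plan is to express $\sdefect(I,s)$ as the difference of two Hilbert functions and then invoke the classical fact that a finitely generated $\mathbb{N}$-graded $k$-algebra with generators in degrees $d_1,\ldots,d_r$ has Hilbert function eventually quasi-polynomial with period dividing $\lcm(d_1,\ldots,d_r)$.

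First I would tensor the short exact sequence $0\to I^s\to I^{(s)}\to I^{(s)}/I^s\to 0$ with $k=R/\m$ over $R$. Right-exactness of tensor product gives
\[
\sdefect(I,s)=\mu(I^{(s)}/I^s)=\dim_k(I^{(s)}/\m I^{(s)})-\dim_k((I^s+\m I^{(s)})/\m I^{(s)}),
\]
so it is enough to show that each term on the right is eventually quasi-polynomial in $s$ with period dividing $d=\lcm(d_1,\ldots,d_r)$.

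For the first term I would set $\overline{R_s(I)}:=R_s(I)/\m R_s(I)$. This is an $\mathbb{N}$-graded $k$-algebra in the $t$-grading with $\overline{R_s(I)}_0=k$ and degree-$s$ piece $I^{(s)}/\m I^{(s)}$, of $k$-dimension $\mu(I^{(s)})$. Since $R_s(I)$ is generated as an $R$-algebra by $g_1,\ldots,g_r$ in $t$-degrees $d_1,\ldots,d_r$, the quotient $\overline{R_s(I)}$ is generated over $k$ by the images $\bar g_1,\ldots,\bar g_r$ in the same degrees. Its Hilbert series is therefore rational of the form $P(t)/\prod_{i=1}^r(1-t^{d_i})$ with $P(t)\in\mathbb{Z}[t]$, and a standard partial fraction expansion shows that the coefficient sequence is eventually a quasi-polynomial with period dividing $d$, because every pole of the denominator is a $d$-th root of unity.

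For the second term, the inclusion $R(I)\hookrightarrow R_s(I)$ descends to a graded $k$-algebra map $R(I)/\m R(I)\to \overline{R_s(I)}$ whose image $\bar B$ has degree-$s$ piece $(I^s+\m I^{(s)})/\m I^{(s)}$. Since the fiber cone $R(I)/\m R(I)$ is standard graded (generated over $k$ in degree $1$ by $I/\m I$), so is the quotient $\bar B$, hence $\dim_k \bar B_s$ is eventually an ordinary polynomial in $s$. Combining, $\sdefect(I,s)$ is an eventual difference of a quasi-polynomial of period dividing $d$ and an eventual polynomial, so it is itself eventually a quasi-polynomial with period dividing $d$. The only delicate point is the period bound on the Hilbert series of $\overline{R_s(I)}$; this requires tracing through the partial-fraction expansion to confirm that only roots of unity whose orders divide some $d_i$ contribute, so that the aggregate quasi-period divides $\lcm(d_1,\ldots,d_r)$ and no larger cyclotomic appears.
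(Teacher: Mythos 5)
The paper itself gives no proof of this lemma: it is quoted verbatim from \cite[Theorem 2.4]{drabkin}, so there is no internal argument to compare yours against. That said, your proof is correct and is essentially the standard (and, as far as I can tell, the intended) argument. By Nakayama, $\sdefect(I,s)=\dim_k I^{(s)}/(I^s+\m I^{(s)})$, and your identity from right-exactness of $-\otimes_R k$ is precisely this quantity written as $\dim_k\bigl(I^{(s)}/\m I^{(s)}\bigr)-\dim_k\bigl((I^s+\m I^{(s)})/\m I^{(s)}\bigr)$. The first term is the Hilbert function of the finitely generated graded $k$-algebra $R_s(I)\otimes_R k$, generated in degrees $d_1,\ldots,d_r$; by Hilbert--Serre its generating function is $P(t)/\prod_{i}(1-t^{d_i})$, and since every pole of that expression is a root of unity whose order divides some $d_i$ and hence divides $d$, the coefficient sequence is eventually quasi-polynomial with quasi-period dividing $d$ (so the ``delicate point'' you flag at the end is in fact routine and needs no further tracing). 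The second term is the Hilbert function of the image of the standard graded fiber cone $R(I)\otimes_R k$ inside $R_s(I)\otimes_R k$; this image is a finitely generated standard graded $k$-algebra, so its Hilbert function is eventually an honest polynomial. The difference is therefore eventually quasi-polynomial with quasi-period dividing $d=\lcm(d_1,\ldots,d_r)$, which is exactly the assertion of the lemma. I see no gaps.
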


\section{Edge cover  for an induced Subgraph in a Graph}     
In this section,  we define the minimum edge cover  for an   induced subgraph in a graph. 
\begin{definition} 
Let $G$ be a finite simple graph and $H$ be an  induced subgraph of $G$.
	\begin{enumerate}    
		\item An edge cover    for $H$ in $ G  $ is a set $ L_H \subseteq E(G)$  such that every vertex of $ H $   is incident to some edge of $ L_H$ in $G.$ Note that an edge cover    for $H$ in $ G  $ may contain some edge of $G$ which are not in  $E(H).$ If all the edges of $L_H$ lie in $E(H),$ we say that $ L_H $  is a Type-I  edge cover for $H$ in $G$ and if some edge of $ L_H $  lies in $E(G) \setminus E(H),$ we call it as a Type-II  edge cover for $H$ in $G.$  
	\item A Type-I (respectively Type-II) edge cover $L_H$ for $H$ in $ G  $ is minimal if no proper subset of $L_H$  is  a Type-I (respectively Type-II) edge cover for $H$ in $ G  .$  
\item  A Type-I (respectively Type-II) minimum edge cover for $H$ in $ G  $ is a Type-I (respectively Type-II) minimal edge cover for $H$ in $ G  $ of the smallest possible size.  
	So every Type-I (respectively Type-II) minimum edge cover for $H$ in $ G  $ is a Type-I (respectively Type-II) minimal edge cover for $H$ in $ G  $ but the converse may not be true.        
\item The size of a  Type-I (respectively Type-II) minimum edge cover for $H$ in $ G  $ is known as the Type-I (respectively Type-II) edge cover number for $H$ in $ G, $ denoted by  $ \gamma^\prime(H) $(respectively $ \gamma^{\prime\prime}(H) $).
    
\end{enumerate}
\end{definition}

\textbf{Minimum edge cover for an induced odd cycle in a graph:}  
 Let   $H = C_{2n+1}=(x_1,\ldots,x_{2n+1}) $ be an induced subgraph of $G.$  
Let $ e_j $ represent the  $ j $-th edge in the cycle, i.e., $  e_j = \{x_j,x_{j+1}\} $ for $ 1 \leq j \leq 2n  $
and $ e_{2n+1} = \{x_{2n+1},x_1\}.$
\vspace*{0.2cm}\\	
First, we give the structure of each Type-I minimum edge cover for $C_{2n+1}$ in $G$ in the following remark.    
\begin{remark}\label{remark1}
  Since $|E(C_{2n+1})| = 2n+1,$ $n$ alternate edges of $C_{2n+1}$ forms a maximum matching in $C_{2n+1},$ i.e., $ \alpha^\prime(C_{2n+1}) = n.$  Then $ \beta^\prime(C_{2n+1})=|V(C_{2n+1})|-\alpha^\prime(C_{2n+1})=(2n+1) - n = n+1  $.  Note that  every  minimum edge cover of  $C_{2n+1}$   contains some maximum matching $M$ in $C_{2n+1}$.	
Here every maximum matching $M$ in $C_{2n+1}$ saturates $2n$ vertices of $C_{2n+1}$ and one vertex remains unsaturated. Thus every  minimum edge cover of  $C_{2n+1}$  consists of edges  of a maximum matching $M$ in $C_{2n+1}$ and one of the two edges of $ C_{2n+1}$ connected with the  vertex of $C_{2n+1}$ not saturated by the edges of $M.$ 
By definition, every minimum edge cover of $C_{2n+1}$ is equivalent to a Type-I minimum edge cover for $C_{2n+1}$ in $G$ which implies $  \gamma^\prime(C_{2n+1}) = \beta^\prime(C_{2n+1}) = n+1. $ Hence every Type-I minimum edge cover for  $C_{2n+1}$ in $G$ consists of edges of a maximum matching $M$ in $C_{2n+1}$ and one of the two edges of $ C_{2n+1}$ connected with the  vertex of $C_{2n+1}$ not saturated by the edges of $M.$
\end{remark}
Next, we describe the structure of each Type-II minimum edge cover for $C_{2n+1}$ in $G$ in the following remark.
\begin{remark}\label{remark2}
 Let $M$ be a maximum matching in $C_{2n+1}.$ The $n$ edges of $M$ saturate $2n$ vertices  and only one vertex remains unsaturated.  We can construct a Type-II  edge cover for $C_{2n+1}$ in $G$ of size $n+1$ by   adding one edge from $E(G)\setminus E(C_{2n+1})$ to $M$ at the unsaturated vertex. Since any  $n$ edges of $G$ can not saturate $2n+1$ vertices of $C_{2n+1}$, $  \gamma^{\prime\prime}(C_{2n+1}) =  n+1. $    
Suppose a Type-II edge cover for $C_{2n+1}$ in $G$ does not contain any maximum matching $M$ of $C_{2n+1}.$
 Even if there is a Type-I edge cover for $C_{2n+1}$  in $G$ which uses only edges of $C_{2n+1}$ and does not contain any maximum matching $M$ of $C_{2n+1},$  its size is greater than $n+1.$  
Since $C_{2n+1}$ is an induced subgraph of $G$ and we want to use at least one edge of $E(G)\setminus E(C_{2n+1})$ in a Type-II edge cover for $C_{2n+1}$ in $G,$ 
 the number of edges of that Type-II edge cover for  $C_{2n+1}$ in $G$ must be greater than $n+1 $. Hence every Type-II minimum edge cover for  $C_{2n+1}$ in $G$ must contain some maximum matching $M$ in $C_{2n+1}$. We can say every Type-II minimum edge cover for  $C_{2n+1}$ in $G$ consists of edges of a maximum matching $M$ in $C_{2n+1}$ and one edge  from $E(G)\setminus E(C_{2n+1})$ connected with the  vertex of $C_{2n+1}$ not saturated by the edges of $M.$

\end{remark}  
By Remark \ref{remark1} and Remark \ref{remark2}, general form of Type-I and Type-II minimum edge cover   for $ C_{2n+1} $ in $ G $ are given below.
\begin{enumerate}
\item \textbf{Type-I minimum edge cover for $ C_{2n+1} $ in $  G $:}\\
A Type-I minimum edge cover  for $ C_{2n+1} $ in $  G $ is a edge set of size $ n+1 $  consists of   a maximum matching of $ n $ alternate edges     $ e_{j+1},e_{j+3},\ldots,e_{j+2n-1} $ mod $2n+1$ for some  $ j \in [2n+1]$ and any one edge between $e_j$ and $ e_{j+2n}$ mod $2n+1.$
  
\item \textbf{Type-II minimum edge cover($x_j$) for $ C_{2n+1} $ in $  G $:}\\
A Type-II minimum edge cover($x_j$)  for $ C_{2n+1} $ in $  G $ is a edge set of size $ n+1 $   consists of the maximum matching of   $ n $ alternate edges     $ e_{j+1},e_{j+3},\ldots,e_{j+2n-1} $  mod $2n+1$ for some  $ j \in [2n+1]$   and any one edge of the set $E(G) \setminus E(C_{2n+1})$ connected with $ x_j .$
\end{enumerate}        
  
\begin{example}
 Consider $G$ to be the whiskered cycle as in Figure \ref{fig1}. Here $V(G) = \{ x_1,\ldots , x_{10}\}$ and $E(G)= \{ e_1,e_2,e_3,e_4,e_5,f_1,f_2,f_3,f_4,f_5   \},$	where $ e_j = \{x_j,x_{j+1}\} $   for $ 1 \leq j \leq 4  ,$ $e_5 = \{x_5,x_1\}$ and     $ f_j = \{x_j,x_{j+5}\} $   for $ 1 \leq j \leq 5.$ Let $H = C_5= (x_1,\ldots,x_5) .$ Note that $C_5$ is an induced subgraph of $G.$ By Remark \ref{remark1} and Remark \ref{remark2}, we have $  \gamma^{\prime}(C_{5}) =  3   $ and $  \gamma^{\prime\prime}(C_{5}) =  3. $      	
 Below we give all the minimum edge covers  for $H= C_{5} $ in $  G .$
 \begin{enumerate}
 	\item Type-I minimum edge covers  for $H= C_{5} $ in $  G $ are $\{e_1,e_2,e_4\},$
 	$\{e_2,e_3,e_5\},$$\{e_3,e_4,e_1\},$\\$\{e_4,e_5,e_2\}$ and $\{e_5,e_1,e_3\}$.	
 	\item Type-II minimum edge covers  for $H= C_{5} $ in $  G $ are 
 	$\{e_2,e_4,f_1\},$$\{e_3,e_5,f_2\},$\\$\{e_4,e_1,f_3\},$ $\{e_5,e_2,f_4\}$ and $\{e_1,e_3,f_5\}$. In particular,
 	\begin{enumerate}
 		
 		\item  Type-II minimum edge cover($x_1$)  for $H= C_{5} $ in $  G $ is $\{e_2,e_4,f_1\},$	
 		\item  Type-II minimum edge cover($x_2$)  for $H= C_{5} $ in $  G $ is $\{e_3,e_5,f_2\},$  
 		\item  Type-II minimum edge cover($x_3$)  for $H= C_{5} $ in $  G $ is $\{e_4,e_1,f_3\}$,
 		\item  Type-II minimum edge cover($x_4$)  for $H= C_{5} $ in $  G $ is $\{e_5,e_2,f_4\}$,
 		\item  Type-II minimum edge cover($x_5$)  for $H= C_{5} $ in $  G $ is $\{e_1,e_3,f_5\}.$	
 	\end{enumerate}	
 	
 \end{enumerate}

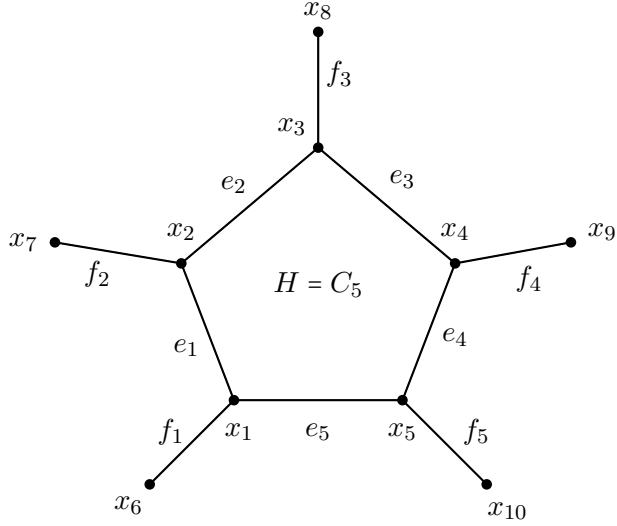
\begin{figure}[!ht]
	\begin{tikzpicture}[scale=1.4]
	\begin{scope}[ thick, every node/.style={sloped,allow upside down}] 
	\definecolor{ultramarine}{rgb}{0.07, 0.04, 0.56} 
	\definecolor{zaffre}{rgb}{0.0, 0.08, 0.66}   

	\draw[fill][-,thick] (-1,0) --(0.6,0); 
	\draw[fill][-,thick] (1.1,1.3) --(0.6,0); 
	\draw[fill][-,thick] (-1.5,1.3) --(-1,0);  
	\draw[fill][-,thick] (1.1,1.3) --(-0.2,2.4); 
	\draw[fill][-,thick] (-1.5,1.3) --(-0.2,2.4);
	\node at (-0.95,-0.3) {$x_1$};
	\node at (0.6,-0.3) {$x_5$};
	\node at (-1.5,1.6) {$x_{2}$};
	\node at (1.1,1.6) {$x_4$};
	\node at (-0.45,2.6) {$x_3$};           
	\draw[fill][-,thick] (-1,0) --(-1.8,-0.8);  
	\draw[fill][-,thick] (0.6,0) --(1.4,-0.8); 
	\draw[fill][-,thick] (-1.5,1.3) --(-2.7,1.5);	
	
	\draw[fill][-,thick] (1.1,1.3) --(2.2,1.5); 
	\draw[fill][-,thick] (-0.2,2.4) --(-0.2,3.5);	
	\draw [fill] [fill] (-1.8,-0.8) circle [radius=0.04];
	\draw [fill] [fill] (1.4,-0.8) circle [radius=0.04];
	\draw [fill] [fill] (-2.7,1.5) circle [radius=0.04];
	\draw [fill] [fill] (2.2,1.5) circle [radius=0.04];
	\draw [fill] [fill] (-0.2,3.5) circle [radius=0.04];
	\node at (-2,-1) {$x_6$};
	\node at (1.6,-1.05) {$x_{10}$};
	\node at (-3,1.5) {$x_{7}$};
	\node at (2.5,1.6) {$x_9$};
	\node at (-0.2,3.7) {$x_8$};  
	
	\node at (-1.45,0.5) {$e_1$};
	\node at (-0.2,-0.3) {$e_5$};
	\node at (-1,2.05) {$e_{2}$};
	\node at (1.1,0.6) {$e_4$};
	\node at (0.6,2.1) {$e_3$};         
	
	\node at (-1.6,-0.3) {$f_1$};
	\node at (1.3,-0.3) {$f_5$};
	\node at (-2.3,1.2) {$f_{2}$};
	\node at (1.8,1.15) {$f_4$};
	\node at (-0.0,3.1) {$f_3$};   
	
	\node at (-0.2,1.1) {$H=C_{5}$};

	\draw [fill] [fill] (-1,0) circle [radius=0.04];
	\draw [fill] [fill] (0.6,0) circle [radius=0.04];
	\draw [fill] [fill] (1.1,1.3) circle [radius=0.04];
	\draw [fill] [fill] (-0.2,2.4) circle [radius=0.04];
	\draw [fill] [fill] (-1.5,1.3) circle [radius=0.04];

	\end{scope}
	\end{tikzpicture}  
	\caption{Whiskered cycle $G=W(C_5)$.}\label{fig1}   	 
\end{figure}

\end{example}

\begin{notation}
Let $ G $   be a graph    containing an induced cycle  $ C_{2n+1}=(x_1,\ldots,x_{2n+1}). $ Note that $G$ may contain an  induced cycle other than $C_{2n+1}$, i.e., $G$ may not be a unicyclic graph. Let $ e_j $ denotes  the  $ j $-th edge in the cycle $C_{2n+1}$, i.e., $  e_j = \{x_j,x_{j+1}\} $   for $ 1 \leq j \leq 2n  $      
and $ e_{2n+1} = \{x_{2n+1},x_1\}.$ Let $G^{\prime}= G \setminus E(C_{2n+1})$ and   $ |E(G^\prime) | = l   $ which implies   $ |E(G)| = 2n+1 + l .  $  
\end{notation}

First, we want to compute the number of possible edge sets  containing  a Type-I minimum  edge cover for    $ C_{2n+1} $ in $G.$
\begin{lemma} \label{type1}    
  Let $ d^\prime_{n+1},d^\prime_{n+2},\ldots,d^\prime_{2n+1+l} $ be the number of     edge sets of  size $ n+1,n+2,\ldots,2n+1+l ,$ respectively       such that each edge set  contains  a Type-I minimum  edge cover for    $ C_{2n+1} $ in $G.$      Then $\displaystyle{ d^\prime_{n+1+k} = (2n+1)\binom{n-1 + l}{k}}$ for $  0\leq k \leq  n-1   ,$  \\ $\displaystyle{ d^\prime_{2n+1+k} = (2n+1)\binom{n-1 + l}{n+k}} + \binom{l}{k}$ for $  0 \leq k \leq l-1   $ and $  d^\prime_{2n+1+l} = 1. $            
\end{lemma}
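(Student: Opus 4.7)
The plan is to reduce the count to counting matchings in the cycle. For each $j\in[2n+1]$, the unique Type-I minimum edge cover with doubly-covered vertex $x_j$ is $C_j=E(C_{2n+1})\setminus M_j$, where $M_j=\{e_{j+1},e_{j+3},\ldots,e_{j+2n-1}\}$ is the maximum matching of $C_{2n+1}$ leaving $x_j$ unsaturated. Hence $S\supseteq C_j$ if and only if $U:=E(C_{2n+1})\setminus S\subseteq M_j$, so $S$ contains some Type-I minimum edge cover precisely when $U$ is a matching of $C_{2n+1}$ contained in some maximum matching. Splitting $S$ according to its intersections with $E(C_{2n+1})$ and $E(G')$ gives
\[
d'_{n+1+k}\;=\;\sum_{m\ge 0} N_m\binom{l}{m+k-n},
\]
where $N_m$ is the number of matchings of size $m$ in $C_{2n+1}$ contained in some maximum matching (with the convention $\binom{l}{\cdot}=0$ for negative or too-large arguments).

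The key step is a bijective lemma: for every non-empty $U$ contained in some $M_j$, there is a unique $j^*\in[2n+1]$ with $e_{j^*+1}\in U$ and $U\subseteq M_{j^*}$. For existence, start from any $M_j\supseteq U$, let $t_0\ge 0$ be minimal with $e_{j+1+2t_0}\in U$, and compute
\[
M_{j+2t_0}\;=\;\bigl(M_j\setminus\{e_{j+1},e_{j+3},\ldots,e_{j+2t_0-1}\}\bigr)\cup\{e_j,e_{j+2},\ldots,e_{j+2t_0-2}\};
\]
by the minimality of $t_0$ the removed edges do not meet $U$, so $U\subseteq M_{j+2t_0}$ and $e_{(j+2t_0)+1}\in U$. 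For uniqueness, if $j_1\ne j_2$ both satisfy the condition then $e_{j_1+1}\in U\subseteq M_{j_2}$ forces $j_1-j_2\in\{0,2,4,\ldots,2n-2\}\pmod{2n+1}$, and symmetrically $j_2-j_1$ lies in the same set; the two conditions intersect only at $j_1-j_2\equiv 0$, so $j_1=j_2$.

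The lemma immediately yields $N_m=(2n+1)\binom{n-1}{m-1}$ for $m\ge 1$, since for each of the $2n+1$ choices of $j^*$ the remaining $m-1$ edges of $U$ are picked from the $n-1$ edges of $M_{j^*}\setminus\{e_{j^*+1}\}$. Substituting into the sum and using $\binom{n-1}{m-1}=\binom{n-1}{n-m}$ reduces the first formula to Vandermonde's identity
\[
\sum_{r\ge 0}\binom{n-1}{r}\binom{l}{k-r}=\binom{n-1+l}{k},
\]
proving $d'_{n+1+k}=(2n+1)\binom{n-1+l}{k}$ for $0\le k\le n-1$. For $0\le k\le l-1$ the same argument applies, but now $U=\emptyset$ is permitted: this empty case contributes $\binom{l}{k}$ (the choice of a $k$-subset of $E(G')$ with $S\supseteq E(C_{2n+1})$), while the sum over $m\ge 1$ yields $(2n+1)\binom{n-1+l}{n+k}$ by the same Vandermonde manipulation. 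Finally $d'_{2n+1+l}=1$ records the unique choice $S=E(G)$. The main obstacle is the uniqueness half of the lemma, which requires the cyclic-parity computation in $\mathbb Z/(2n+1)\mathbb Z$; after that the remaining manipulations are routine Vandermonde bookkeeping.
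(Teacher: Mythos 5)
Your proof is correct, and while the underlying disambiguation device turns out to coincide with the paper's, your packaging of it is genuinely different and in one respect more complete. The paper lists the $2n+1$ Type-I minimum edge covers $E_i$ (consecutive pair $e_i,e_{i+1}$ plus $n-1$ alternate edges) and counts the supersets of $E_i$ obtained by adding $k$ edges from $E(G)\setminus(E_i\cup\{e_{i+2}\})$, i.e.\ supersets avoiding the designated edge $e_{i+2}$; under your complementation $U=E(C_{2n+1})\setminus S$ this is exactly the condition $e_{j^*+1}\in U\subseteq M_{j^*}$ with $j^*=i+1$, so your canonical-representative rule and the paper's exclusion rule are the same bijection viewed from opposite sides. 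The trade-offs are these: the paper reads off $\binom{n-1+l}{k}$ in one stroke as a choice of $k$ edges from the $n-1+l$ permitted ones, whereas you must recombine the cycle and non-cycle contributions via a Vandermonde convolution; on the other hand, the paper only argues that the sets $E_i\cup P$ produced by its rule are pairwise distinct and leaves implicit the surjectivity claim that \emph{every} edge set containing some $E_i$ (and missing at least one cycle edge) actually arises from some $E_j$ with $e_{j+2}$ absent, while your shift identity $M_{j+2t_0}=\bigl(M_j\setminus\{e_{j+1},\ldots,e_{j+2t_0-1}\}\bigr)\cup\{e_j,\ldots,e_{j+2t_0-2}\}$ supplies precisely that existence half, and your parity argument in $\mathbb{Z}/(2n+1)\mathbb{Z}$ gives a crisper uniqueness proof. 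Both treatments handle the supersets of the full cycle separately and agree on the extra term $\binom{l}{k}$ and on $d'_{2n+1+l}=1$.
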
  	
\begin{proof}
 By definition, any Type-I minimum edge cover for $C_{2n+1}$ in $G$ consists of two consecutive edges and $n-1$ alternate edges of $C_{2n+1}$. There are  $ 2n + 1 $ such Type-I minimum edge covers  for $C_{2n+1}$  in  $  G $ and let us denote these by  $ E_1,E_2,E_3,\ldots,E_{2n+1}, $   where    $ E_1= \{e_1,e_2,e_4,e_6,\ldots,e_{2n-2},e_{2n} \},$  $E_2= \{e_2,e_3,e_5,e_7,\ldots,e_{2n-1},e_{2n+1} \} ,$  $ E_3=\{e_3,e_4,e_6,e_8,\ldots,\\e_{2n},e_1 \},$ \ldots $,E_{2n+1} = \{e_{2n+1},e_1,e_3,e_5,\ldots,e_{2n-3},e_{2n-1} \} $. 
	\vspace*{0.2cm}\\  	
	First, we want to find the  number of distinct edge sets of size $ n + 1 + k   $ containing  a Type-I  minimum edge cover for    $ C_{2n+1} $ in $G$  and at most $2n$ edges of $C_{2n+1}$ for $1 \leq k \leq  n-1+l.$ Thus we have to add  $ k $ edges  to each Type-I minimum edge cover for    $ C_{2n+1} $ in $G$. Without loss of generality we can start from the minimum edge cover $ E_1 .$         
	Let the set of $ k $  edges  consists of $ e_3 $ and some $ k-1 $ element set $ K $  where $ K \subseteq E(G) \setminus ( {E_1 \cup \{  e_3     \}}   ) .$  
	 After adding the edge  $ e_3 $ and edge set $ K   $ to $ E_1  ,$ the edge set of size  $ n+1+k $  will be  $ \{e_1,e_2,e_4,e_6,\ldots,e_{2n-2},e_{2n} \} \cup \{ e_3   \} \cup K .$ But we can get the same edge set by adding $ e_2 $ and $ K $ to $ E_3=\{e_3,e_4,e_6,e_8,\ldots,e_{2n},e_1 \}.$	   
Since we need distinct edge sets of size $ n+1+k,$
we do not include $ e_3 $ in the set of  $ k $  edges for $ E_1 $.
Thus for $E_1,$ we choose  the  set of $k$ edges to be a subset of $E(G) \setminus ( {E_1 \cup \{  e_3     \}}   ).$
Note that $e_3$ is the first missing edge after the two  consecutive edges $e_1$ and $e_2$ in $E_1$. Similarly,  we do not include the first missing edge after the  two consecutive edges for other $ E_i $'s, i.e., we do not add   $  e_4,e_6,\ldots,e_{2n},e_1  $ to $ E_2,E_4,\ldots,E_{2n-2},E_{2n} ,$ respectively.          Here each of $ E_3,E_5,\ldots,E_{2n+1} $ contains   $ e_3 .$
If   $P \subseteq  E(G) \setminus ( {E_1 \cup \{  e_3     \}}   )$ is a  set of $k$  edges,
then $ \{e_4,e_6,\ldots,e_{2n},e_1\}  \subset E_1 \cup P  $ and $e_3 \notin E_1 \cup P.$ 
So $E_1 \cup P$ is    distinct from the other edge sets of size $ n+1+k  $ we get,  after adding  $ k $  edges to other $ E_i   $'s by the same procedure.
Thus if we add  $ k $ edges  from $E(G) \setminus ( {E_1 \cup \{  e_3     \}}   )$ to $ E_1 $,  we get an   edge set of size $ n+1 +k $ which  is distinct from other edge sets of size $ n+1+k  $ we get,  after adding $ k $  edges to other $ E_i   $'s in the same procedure.  Hence the number of edge sets   of size  $ n+1+k $ we get, after adding  $ k $ edges  from the remaining $(2n+1+l)-(n+2)=n-1+l$ edges of $E(G) \setminus ( {E_1 \cup \{  e_3     \}}   )$ to $ E_1 ,$ is  $\displaystyle{   \binom{n-1+l}{k} }.$      Similarly, we can add $ k $ edges  to  other $ E_i $'s.  Note that in this technique of addition of  edges,  any edge set of size  $ n+1+k $ contains at most $2n$ edges of $C_{2n+1}.$  
	Since there are $ (2n+1) $ $ E_i $'s,   the number of edge sets of size  $ n+1+k $ containing one of $E_i$'s and at most $2n$ edges of $C_{2n+1}$ is   $\displaystyle{   (2n+1)\binom{n-1+l}{k}}$ for $  0\leq k \leq  n-1+l. $
	\vspace*{0.2cm}\\
	Next, we want to find the number of  edge sets of size $ 2n + 1 + k   $ containing  $2n+1$ edges of $C_{2n+1}$ for $0 \leq k \leq l.$ Since the  $2n+1$ edges of $C_{2n+1}$ are fixed, number of  edge sets of size $ 2n + 1 + k   $ containing  a Type-I  minimum edge cover for    $ C_{2n+1} $ in $G$  and  $2n+1$ edges of $C_{2n+1} $ is $\displaystyle{ \binom{l}{k} } $ for $0 \leq k \leq l.$
	\vspace*{0.2cm}\\   
	Hence $\displaystyle{ d^\prime_{n+1+k} = (2n+1)\binom{n-1 + l}{k}}$ for $  0\leq k \leq n-1   ,$   $\displaystyle{ d^\prime_{2n+1+k} = (2n+1)\binom{n-1 + l}{n+k}} + \binom{l}{k}$ for $ 0 \leq k \leq l-1   $ and $  d^\prime_{2n+1+l} = \displaystyle{ \binom{l}{l} } = 1. $   		
\end{proof}

Next, we want to compute the number of possible edge sets  containing  a Type-II minimum  edge cover but not any Type-I minimum edge cover for    $ C_{2n+1} $ in $G.$
\begin{notation}
Let   there are $ m  $
common vertices of $ C_{2n+1} $ and $G^\prime.$ 
Let    $ {G^\prime}_r = V(C_{2n+1}) \cap V(G^\prime) = \{ y_1,y_2,\ldots,y_m \}  $ where each $ y_i = x_j $ for some $j \in [2n+1]$. Let $ l_1,l_2,\ldots,l_m $      be the number of edges of   $ {G^\prime} $  connected to those $ m $ vertices  $ y_1,y_2,\ldots,y_m   ,$   respectively. Let $ u_i =  (n-1)   + (l-l_i)    $ for $1 \leq i \leq m$.

\end{notation}

\begin{remark}
	If an edge set   contains  a Type-II minimum    edge cover($x_j$) for some $j \in [2n+1]$   but not any Type-I minimum edge cover    for    $ C_{2n+1} $ in $G$, then that edge set can not include any of two edges $e_{j-1}$ and $ e_j$ mod $2n+1.$ Thus  at least two  edges of $C_{2n+1}$ are missing in any edge set  containing  a Type-II minimum    edge cover    but not any Type-I minimum edge cover    for    $ C_{2n+1} $ in $G$. Note that $|E(G)| = 2n+1+l.$ Hence there does not exist any edge set of size $2n+l$ or $2n+1+l$   containing  a Type-II minimum    edge cover   but not any Type-I minimum edge cover    for    $ C_{2n+1} $ in $G$.            
		
\end{remark}
\vspace*{-0.4cm}
\begin{lemma}\label{type2} 
    Let $ d^{\prime\prime}_{n+1},d^{\prime\prime}_{n+2},\ldots,d^{\prime\prime}_{2n-1+l} $ be the number of   edge sets of  size $ n+1,n+2,\ldots,2n-1+l ,$ respectively         such that     each edge set   contains  a Type-II minimum    edge cover   but not any Type-I minimum edge cover    for    $ C_{2n+1} $ in $G$. Then
  	\vspace*{0.2cm}\\
     $ d^{\prime\prime}_{n+k} = \displaystyle{\bigg[\binom{l_1}{1}\binom{u_1}{ k  - 1} + \binom{l_1}{2}\binom{u_1}{ k - 2 }}\displaystyle{ + \cdots + \binom{l_1}{l_1}\binom{u_1}{k - l_1}}\bigg]$
   	\vspace*{0.2cm}\\
   \hspace*{1.1cm}  $ +   
	$ $\cdots$ $  + \displaystyle{\bigg[\binom{l_m}{1}\binom{u_m}{ k   - 1}} + \binom{l_m}{2}\binom{u_m}{k  - 2}+ \cdots +\displaystyle{ \binom{l_m}{l_m}\binom{u_m}{k-l_m}  \bigg]}$   for $  1  \leq k \leq  n-1 + l  ,$
		\vspace*{0.2cm}\\
	 where we assume  $\displaystyle{\binom{i}{j} = 0}$ if $j > i$ or $j < 0 .$  		
	
\end{lemma}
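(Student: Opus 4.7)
The plan is to partition each qualifying edge set according to the unique common vertex $y_i$ at which it contains a Type-II minimum edge cover, and then further according to the exact number $j$ of edges of $G'$ incident to $y_i$ that lie in the set. Summing over all valid pairs $(y_i,j)$ will recover the stated formula.

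Fix $y_i = x_{j_0}$ and let $M_{j_0} = \{e_{j_0+1}, e_{j_0+3}, \ldots, e_{j_0+2n-1}\}$ (indices modulo $2n+1$) be the unique maximum matching of $C_{2n+1}$ leaving $y_i$ unsaturated. Any edge set $S$ of size $n+k$ containing a Type-II minimum edge cover at $y_i$ must contain $M_{j_0}$ together with at least one of the $l_i$ edges of $G'$ meeting $y_i$. Moreover, to exclude the Type-I covers $M_{j_0}\cup\{e_{j_0-1}\}$ and $M_{j_0}\cup\{e_{j_0}\}$, neither $e_{j_0-1}$ nor $e_{j_0}$ may lie in $S$. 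Once one fixes that exactly $j \ge 1$ of the $l_i$ edges at $y_i$ appear in $S$, there are $\binom{l_i}{j}$ choices for them, and the remaining $k-j$ edges can be drawn freely from a pool of size $u_i = (n-1)+(l-l_i)$ formed by the $n-1$ cycle edges outside $M_{j_0}\cup\{e_{j_0-1},e_{j_0}\}$ together with the $l-l_i$ edges of $G'$ not incident to $y_i$. This produces $\binom{l_i}{j}\binom{u_i}{k-j}$ admissible sets for each $(y_i,j)$.

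The crucial step is to verify that there is no double counting: each qualifying $S$ must have a Type-II cover at a unique $y_i$ with a well-defined value of $j$. I would prove this by showing that whenever $j_0 \neq j_0'$, at least one of $e_{j_0-1}, e_{j_0}$ lies in $M_{j_0'}$. The complement of $M_{j_0'}$ in $E(C_{2n+1})$ is $\{e_{j_0'+2t}:0\le t\le n\}$ modulo $2n+1$; if both $e_{j_0-1}$ and $e_{j_0}$ lay in this complement, then $j_0 \equiv j_0'+2t_1$ and $j_0-1 \equiv j_0'+2t_2$ modulo $2n+1$ for some $0\le t_1, t_2\le n$, giving $2(t_1-t_2)\equiv 1$ modulo $2n+1$. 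Since $2(n+1)\equiv 1$ modulo $2n+1$, this forces $t_1-t_2\equiv n+1$ modulo $2n+1$; the only value in $\{-n,\ldots,n\}$ congruent to $n+1$ is $-n$, yielding $t_1=0, t_2=n$ and hence $j_0=j_0'$, a contradiction. Consequently, under the $(y_i,j)$ setup above, $S$ contains no other maximum matching $M_{j_0'}$, so it has no Type-II cover at any $y_{i'}\neq y_i$ and (combined with $e_{j_0-1}, e_{j_0}\notin S$) no Type-I cover whatsoever. Summing $\binom{l_i}{j}\binom{u_i}{k-j}$ over $1\le j\le l_i$ and $1\le i\le m$ then yields the claimed formula, with the convention $\binom{a}{b}=0$ for $b<0$ or $b>a$ handling the boundary values of $k$.

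The main obstacle I anticipate is the uniqueness argument above, whose modular arithmetic hinges on $2n+1$ being odd so that $2$ is invertible modulo $2n+1$; the rest is routine binomial bookkeeping.
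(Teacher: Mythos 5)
Your proposal is correct and follows essentially the same route as the paper: fix the vertex $y_i=x_{j_0}$, observe that the matching $M_{j_0}$ is forced, require at least one of the $l_i$ pendant edges at $y_i$ while excluding $e_{j_0-1}$ and $e_{j_0}$, and count the remaining $k-j$ edges from the pool of size $u_i$. Your explicit modular-arithmetic check that the counts for distinct $y_i$ are disjoint (so the outer sum involves no double counting) is a welcome addition that the paper leaves implicit, but it does not change the underlying argument.
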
        	
\begin{proof}  	
	Here  the number of edges connected with  $ y_1 $ is $ l_1 $ in $ G^\prime $   and $ y_1 = x_j $ for some  $ j \in [2n+1].$ Let the $l_1$ edges connected with $ x_j $ are $f_1,\ldots,f_{l_1}.$  Then the only maximum matching saturating the $2n$ vertices of  $V(C_{2n+1}) \setminus \{x_j\}$ is the set $M_1$ of $ n $ alternate edges  $ e_{j+1},e_{j+3},\ldots,e_{j+2n-1} $ mod $2n+1.$   
	So any Type-II minimum edge cover($x_j$)   for $ C_{2n+1}  $  in $ G $ consists of edges of $M_1$ and one edge from the $l_1$ edges  $f_1,\ldots,f_{l_1}.$ We want to choose   edge sets of size $n+k$ for some $k \geq 1$ containing a Type-II minimum edge cover($x_j$) but not a Type-I minimum edge cover  for $ C_{2n+1}  $  in $ G .$ Since the edges of $M_1$ are fixed in such a  edge set of size $n+k$, we need to choose  only $k$ edges where the $k$ edges consist of at
	least one edge from those $l_1$ edges $f_1,\ldots,f_{l_1}$ and the remaining from the edge set $E(G) \setminus (M_1 \cup \{e_{j-1},e_j,f_1,\ldots,f_{l_1}  \})$ whose size is $(2n+1+l) - (n+2+l_1)=(n-1)+(l-l_1)=u_1.$ Note that if $ e_{j-1}$ or $e_j $ is in the chosen set of $k$ edges, then that edge set of size $ n + k $   contains a    Type-I minimum   edge cover for $ C_{2n+1}  $ in $ G .$    
	Thus the  number of edge sets of size $ n + k $  containing a   Type-II minimum   edge cover($x_j$) but
not any Type-I minimum edge cover for $ C_{2n+1}  $ in $ G $   is      
	$           \displaystyle{\binom{l_1}{1}\binom{u_1}{ k  - 1} + \binom{l_1}{2}\binom{u_1}{ k - 2 }+ \cdots +}\displaystyle{  \binom{l_1}{l_1}\binom{u_1}{k - l_1}} $    for $  1\leq k \leq n-1 + l,$ where 	  $\displaystyle{\binom{i}{j} = 0}$ if $j > i$ or $j < 0 .$     
		\vspace*{0.1cm}\\
		Similarly, we can find the  number of edge sets of size $ n + k $  containing a   Type-II minimum   edge cover($x_j$) but
	 not any Type-I minimum edge cover for $ C_{2n+1}  $ in $ G $ for other $ x_j $'s.

	  Hence   $ d^{\prime\prime}_{n+k} = \displaystyle{\bigg[\binom{l_1}{1}\binom{u_1}{ k  - 1} + \binom{l_1}{2}\binom{u_1}{ k - 2 } + \cdots + \binom{l_1}{l_1}\binom{u_1}{k - l_1}}\bigg]$ $ \displaystyle{  }$  
		\vspace*{0.15cm}\\    
    $ \hspace*{2.3cm}+\cdots + \displaystyle{\bigg[\binom{l_m}{1}\binom{u_m}{ k   - 1}}\displaystyle{ + \binom{l_m}{2}\binom{u_m}{k  - 2} + \cdots + \binom{l_m}{l_m}\binom{u_m}{k-l_m}  }\bigg]$   for $  1\leq k \leq n-1 + l  ,$
    	\vspace*{0.15cm}\\
     where 	  $\displaystyle{\binom{i}{j} = 0}$
 		if $j > i$ or $j < 0 .$
   \end{proof}    

\begin{corollary}\label{ucc3.}
	Let $ d_{n+1},d_{n+2},\ldots,d_{2n+1+l} $ be the number of   edge sets of  size $ n+1,n+2,\ldots,2n+1+l ,$ respectively such that each edge set contains a Type-I or Type-II minimum   edge cover for $C_{2n+1}$  in $ G. $      Then  $ d_{n+k} =    d^\prime_{n+k} + d^{\prime\prime}_{n+k}   $ for $  1\leq k \leq n-1 + l  ,$           $ d_{2n+l} =    d^\prime_{2n+l}  $ and $ d_{2n+1+l} =    d^\prime_{2n+1+l}  .$   
	
\end{corollary}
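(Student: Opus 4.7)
The plan is to prove the corollary by partitioning the collection of edge sets counted by $d_{n+k}$ into two disjoint classes and then invoking Lemma \ref{type1} and Lemma \ref{type2}. Fix $k$ with $1 \leq k \leq n-1+l$. Let $\mathcal{S}_{n+k}$ denote the family of edge sets of size $n+k$ in $E(G)$ that contain some (Type-I or Type-II) minimum edge cover for $C_{2n+1}$ in $G$, so $d_{n+k} = |\mathcal{S}_{n+k}|$. Define $\mathcal{S}'_{n+k} \subseteq \mathcal{S}_{n+k}$ to be those edge sets that contain a Type-I minimum edge cover for $C_{2n+1}$ in $G$, and $\mathcal{S}''_{n+k} \subseteq \mathcal{S}_{n+k}$ to be those edge sets that contain a Type-II minimum edge cover for $C_{2n+1}$ in $G$ but do \emph{not} contain any Type-I minimum edge cover for $C_{2n+1}$ in $G$.

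By construction $\mathcal{S}'_{n+k} \cap \mathcal{S}''_{n+k} = \emptyset$, and every element of $\mathcal{S}_{n+k}$ lies in exactly one of these two subfamilies; hence $\mathcal{S}_{n+k} = \mathcal{S}'_{n+k} \sqcup \mathcal{S}''_{n+k}$. Lemma \ref{type1} tells us that $|\mathcal{S}'_{n+k}| = d'_{n+k}$, and Lemma \ref{type2} tells us that $|\mathcal{S}''_{n+k}| = d''_{n+k}$, giving $d_{n+k} = d'_{n+k} + d''_{n+k}$ for $1 \leq k \leq n-1+l$.

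For the remaining two cases, namely sizes $2n+l$ and $2n+1+l$, the argument reduces to showing $\mathcal{S}''$ is empty at these sizes. This is exactly what the remark preceding Lemma \ref{type2} asserts: any edge set containing a Type-II minimum edge cover($x_j$) for some $j$ but no Type-I minimum edge cover must omit both $e_{j-1}$ and $e_j$ from $E(C_{2n+1})$, so its size is at most $(2n+1+l) - 2 = 2n-1+l$. Consequently there is no such edge set of size $2n+l$ or $2n+1+l$, i.e.\ $d''_{2n+l} = d''_{2n+1+l} = 0$, and the disjoint decomposition yields $d_{2n+l} = d'_{2n+l}$ and $d_{2n+1+l} = d'_{2n+1+l}$.

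The only non-routine point is verifying disjointness of $\mathcal{S}'_{n+k}$ and $\mathcal{S}''_{n+k}$ and the exhaustiveness of their union, but both follow immediately from the definitions once one observes that an edge set is counted in $\mathcal{S}_{n+k}$ precisely when it contains \emph{some} minimum edge cover (of either Type), so the dichotomy ``contains a Type-I'' vs.\ ``contains only Type-II, no Type-I'' partitions $\mathcal{S}_{n+k}$. Thus the corollary follows by direct addition, with the boundary cases handled by the size obstruction from the remark.
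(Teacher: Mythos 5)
Your proof is correct and is essentially the paper's argument made explicit: the paper simply states that the corollary follows directly from Lemma \ref{type1} and Lemma \ref{type2}, and your partition into ``contains a Type-I cover'' versus ``contains a Type-II cover but no Type-I cover,'' together with the size obstruction from the preceding remark for the two largest sizes, is exactly the implicit reasoning.
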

\begin{proof}
	It follows directly from Lemma \ref{type1} and Lemma \ref{type2}.  	
\end{proof}

\section{Symbolic Defects of edge ideals of Unicyclic Graphs}
In this section, we compute the exact values of all symbolic defects of the edge ideal of a unicyclic graph with a unique odd cycle using the concept of minimum edge cover for an induced subgraph in a graph. In the next  lemma, we describe the minimal generators of $I^s$ for all $s \geq 1,$ where $I$ is the edge ideal of $  C_{2n+1}=(x_1,\ldots,x_{2n+1}) .$

\begin{lemma}\label{unique1}
	Let $I=  ( g_1, \ldots , g_{2n+1}  )$ be the edge ideal of the cycle $C_{2n+1}=(x_1,\ldots,x_{2n+1}),$ where  $  g_j = x_jx_{j+1} $   for $ 1 \leq j \leq 2n  $      
	and $ g_{2n+1} = x_{2n+1}x_1.$
	Then
	every minimal  generator $ M  $ of $I^s$ has a unique expression of the
	form $ M = g_1^{\alpha_1} \cdots  g_{2n+1}^{\alpha_{2n+1}}  $ where $ \displaystyle{ \sum_{i=1}^{2n+1} \alpha_i =  s }. $      
\end{lemma}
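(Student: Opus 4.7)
The statement asserts two things about any minimal generator $M$ of $I^s$: (i) $M$ can be written as a product $g_1^{\alpha_1}\cdots g_{2n+1}^{\alpha_{2n+1}}$ with $\sum \alpha_i = s$, and (ii) this expression is unique. I would treat these two parts separately.

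For existence, I would exploit the fact that $I^s$ is a monomial ideal. Any monomial $M \in I^s$ can be written as $g_{i_1}\cdots g_{i_s}\cdot h$ for some monomial $h\in R$. If $M$ is a minimal generator and $h \neq 1$, then any variable $x_j$ dividing $h$ would give $M/x_j \in I^s$, contradicting minimality. Hence $h = 1$ and, by grouping equal factors, $M = g_1^{\alpha_1}\cdots g_{2n+1}^{\alpha_{2n+1}}$ with $\sum_i \alpha_i = s$.

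For uniqueness, which is the heart of the statement, I would compare degrees variable by variable. Suppose
\[
g_1^{\alpha_1}\cdots g_{2n+1}^{\alpha_{2n+1}} \;=\; g_1^{\beta_1}\cdots g_{2n+1}^{\beta_{2n+1}}.
\]
Since $x_i$ appears only in $g_{i-1}$ and $g_i$ (with indices mod $2n+1$), comparing $x_i$-degrees on both sides yields $\alpha_{i-1} + \alpha_i = \beta_{i-1} + \beta_i$ for every $i \in [2n+1]$. Setting $\gamma_i := \alpha_i - \beta_i$, this becomes the system $\gamma_{i-1} + \gamma_i = 0$ for all $i$ mod $2n+1$. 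Iterating gives $\gamma_{i} = -\gamma_{i-1} = \gamma_{i-2} = \cdots = (-1)^{i-1}\gamma_1$. Applying this around the full cycle of length $2n+1$ forces $\gamma_1 = (-1)^{2n+1}\gamma_1 = -\gamma_1$, so $\gamma_1 = 0$, and then $\gamma_i = 0$ for every $i$. Hence $\alpha_i = \beta_i$ for all $i$.

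There is no real obstacle here; the only conceptual point to flag is that the argument relies crucially on $2n+1$ being \emph{odd} (for an even cycle one obtains the nontrivial relation $g_1 g_3 \cdots g_{2n-1} = g_2 g_4 \cdots g_{2n}$, so uniqueness fails). I would make this dependence on parity explicit in the write-up so that the reader sees why the lemma is tailored to odd cycles, which is precisely the setting used in the subsequent symbolic-defect computation via Lemma \ref{unicyclic}.
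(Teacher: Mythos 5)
Your proof is correct and follows essentially the same route as the paper: both arguments compare the exponent of each variable $x_i$ (which equals $\alpha_{i-1}+\alpha_i$) to get a linear system in the exponents and then use the odd length of the cycle to force $\alpha_i=\beta_i$; the paper does this via the even-indexed vertices, a total-degree count, and an appeal to rotational symmetry, while you solve the full circulant system $\gamma_{i-1}+\gamma_i=0$ directly. Your write-up is in fact slightly more complete, since you also verify the existence half (minimal generators of $I^s$ are products of $s$ of the $g_j$) and make the dependence on the parity of $2n+1$ explicit, both of which the paper leaves implicit.
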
    
\begin{proof}
 Suppose $ M = g_1^{\beta_1} \cdots  g_{2n+1}^{\beta_{2n+1}} $ is another expression.    
	Since each $ g_j $ has  degree $2,$ $ \displaystyle{ \sum_{i=1}^{2n+1} \alpha_i =  \sum_{i=1}^{2n+1} \beta_i } .$
	 The exponent of $ x_{2i }$ in $ M $
	is $  \alpha_{2i-1} + \alpha_{2i} = \beta_{2i-1} + \beta_{2i}    $ for $ 1 \leq i \leq n    . $  Therefore $ \displaystyle{ \sum_{i=1}^{2n} \alpha_i =  \sum_{i=1}^{2n} \beta_i } .$ Thus  $   \alpha_{2n+1} =  \beta_{2n+1}    .$  Due to the symmetry of the graph, we have  $   \alpha_{i} =  \beta_{i}   $  for $ 1 \leq i \leq 2n .     $
\end{proof}
  
Let $ G $     be a unicyclic graph     with a unique  cycle  $ C_{2n+1}=(x_1,\ldots,x_{2n+1}). $
Note that  $G  \setminus  E(C_{2n+1})$ is a forest. Let $T = G  \setminus  E(C_{2n+1})$  and  $ |E(T)| = l .$ This implies  $ |E(G)| = 2n+1 + l .  $  Suppose   there are $ m  $ vertices of $ C_{2n+1} $  to which some trees are connected. Let    $ T_r = V(C_{2n+1}) \cap V(T)  = \{ y_1,y_2,\ldots,y_m \}  $ where each $ y_i = x_j $ for some  $ j \in [2n+1] $.   Let $ l_1,l_2,\ldots,l_m $      be the number of edges connected to those $ m $ roots  $ y_1,y_2,\ldots,y_m   $ in   $ T,  $   respectively. Let $ u_i = (l-l_i) + (n-1)  $ for $1 \leq i \leq m$. We use these notations for the remainder of this section. 
The next  lemma describes the minimal generators of $I^s$ for all $s \geq 1,$ where $I$ is the edge ideal of $  G .$

\begin{lemma}\label{unique2}
Let $ I = I(G) $ be   the edge ideal of $G$. Let $ g_1,\ldots, g_q $ be the minimal 
 generators of $ I. $
Then for every $ s \geq  1 $ and non-negative integers $ \alpha_1,\ldots, \alpha_q $ with $\displaystyle{\sum_{i=1}^{q} \alpha_i = s},$ the monomial $ g_1^{\alpha_1}\cdots g_q^{\alpha_q} $ is a unique minimal generator of $I^s.$  	
	
\end{lemma}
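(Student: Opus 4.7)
The plan is to prove separately (a) that every product $M = g_1^{\alpha_1}\cdots g_q^{\alpha_q}$ with $\sum \alpha_i = s$ is a minimal generator of $I^s$, and (b) that the exponent tuple $(\alpha_1,\ldots,\alpha_q)$ is uniquely determined by $M$. Part (a) is immediate from a degree count: the ideal $I$ is generated in degree $2$, so every element of $I^s$ has degree at least $2s$, while $M$ itself has degree exactly $2s$. Hence no proper monomial divisor of $M$ lies in $I^s$, so $M$ is automatically a minimal generator of $I^s$.

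For part (b) I would induct on $l = |E(T)|$, the number of non-cycle edges of $G$. The base case $l = 0$ is exactly Lemma \ref{unique1}, since in that case $G = C_{2n+1}$. For the inductive step, with $l \geq 1$, I want to peel off a leaf from a tree in $T$. Since $T$ has at least one edge, the tree component containing that edge, rooted at its cycle vertex $c$, has at least one graph-theoretic leaf distinct from $c$. Call such a vertex $v$; then $v$ has degree $1$ in $G$, so there is a unique edge generator $g_j = x_v x_w$ of $I$ containing the variable $x_v$.

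Given two expressions $g_1^{\alpha_1}\cdots g_q^{\alpha_q} = g_1^{\beta_1}\cdots g_q^{\beta_q}$ for the same monomial $M$, comparing the exponent of $x_v$ on both sides forces $\alpha_j = \beta_j$, since $x_v$ appears in no other $g_i$. Cancelling $g_j^{\alpha_j}$ from both sides yields an equality of two products of $s - \alpha_j$ edge generators of the smaller edge ideal $I(G \setminus \{v\})$. Because $v$ is off the cycle, $G \setminus \{v\}$ is again a unicyclic graph with the same unique odd cycle $C_{2n+1}$, but with only $l-1$ forest edges. The inductive hypothesis then gives $\alpha_i = \beta_i$ for every $i \neq j$, completing the proof.

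The only real subtlety lies in the base case, which is the content of Lemma \ref{unique1} and was handled there via the exponent count on even-indexed cycle variables. In the inductive step the only point that must be verified is the existence of a leaf of $G$ off the cycle whenever $l \geq 1$; this follows from the standard fact that any finite tree with at least one edge has at least two leaves, only one of which can be the chosen root.
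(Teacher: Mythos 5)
Your proposal is correct and follows essentially the same route as the paper: induct on the number $l$ of non-cycle edges, peel off a leaf $v$ lying off the cycle, read off its exponent $\alpha_j=\beta_j$ from the unique generator containing $x_v$, cancel, and apply the induction hypothesis to $G\setminus v$ (the paper's base case is $l=1$ rather than $l=0$, an immaterial difference, and your explicit degree argument for minimality is a small point the paper leaves implicit).
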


\begin{proof}
	Let  $ E(T) = E(G) \setminus  E(C_{2n+1}) = \{f_1,\ldots, f_l\}. $
	We apply induction on $l$.
	\vspace*{0.2cm}\\
	Base case: $l=1.$ There is a leaf $ z_1 $ in $ G $ such that $   N_G(z_1) =  \{x_i  \} $ for some $x_i \in V(C_{2n+1})$. Without loss of generality let $x_i = x_1$.      
Then $I = ( g_1, \ldots , g_{2n+1} , g_{2n+2} ) ,$ where  $  g_j = x_jx_{j+1} $   for $ 1 \leq j \leq 2n  $, $ g_{2n+1} = x_{2n+1}x_1$ and $  g_{2n+2} = x_1z_1.$   
	We want to prove that
	every minimal  generator $ M  $ of $I^s$ has a unique expression of the
	form $ M = g_1^{\alpha_1} \cdots  g_{2n+1}^{\alpha_{2n+1}} g_{2n+2}^{\alpha_{2n+2}} .$
	\vspace*{0.2cm}\\
	Suppose $ M = g_1^{\beta_1} \cdots  g_{2n+1}^{\beta_{2n+1}}g_{2n+2}^{\beta_{2n+2}} $ is another expression. Since each $ g_j $ is of  degree $2,$ $ \displaystyle{ \sum_{i=1}^{2n+2} \alpha_i =  \sum_{i=1}^{2n+2} \beta_i } $
	. The exponent of $ z_{1}$ in $ M $
	is $  \alpha_{2n+2} = \beta_{2n+2}   .$ Therefore  $ \displaystyle{ \sum_{i=1}^{2n+1} \alpha_i =  \sum_{i=1}^{2n+1} \beta_i } .$ 
	Here    $  g_1^{\alpha_1} \cdots  g_{2n+1}^{\alpha_{2n+1}}  = g_1^{\beta_1} \cdots  g_{2n+1}^{\beta_{2n+1}}$ is a minimal generator of  $I(C_{2n+1})^{s- \alpha_{2n+2}}$. Thus by  Lemma \ref{unique1}, $   \alpha_{i} =  \beta_{i}   $  for $ 1 \leq i \leq 2n + 1.     $
\vspace*{0.1cm}\\      Assume that  $  l \geq  2.  $  There is a leaf $ z_p $ in $ G $ such that $  N_G(z_p) =  \{z_q\} $ for some $z_p\in V(G)\setminus V(C_{2n+1})$ and $z_q \in V(G).$ Let $ G^\prime = G \setminus z_p. $
Let $I = ( g_1, \ldots , g_{2n+1} , g_{2n+2} , \ldots , g_{2n+1+l}  ) ,$ where  $  g_j = x_jx_{j+1} $   for $ 1 \leq j \leq 2n  $, $ g_{2n+1} = x_{2n+1}x_1$ and  $  g_{2n+2} , \ldots , g_{2n+1+l}  $ are the minimal generators of $I$ corresponding to the edges   $ f_1,\ldots, f_l. $  Without loss of generality let $g_{2n+1+l}  = z_q z_p.$  We want to prove that
	every minimal  generator $ M  $ of $I^s$ has a unique expression of the
	form $ M = g_1^{\alpha_1} \cdots  g_{2n+1}^{\alpha_{2n+1}} g_{2n+2}^{\alpha_{2n+2}} \cdots g_{2n+1+l}^{\alpha_{2n+1+l}}.$
\vspace*{0.1cm}\\  
Suppose $ M = g_1^{\beta_1} \cdots  g_{2n+1}^{\beta_{2n+1}}g_{2n+2}^{\beta_{2n+2}} \cdots g_{2n+1+l}^{\beta_{2n+1+l}}$ is another expression.    
Since each $ g_j $ has degree $2,$ $ \displaystyle{ \sum_{i=1}^{2n+1+l} \alpha_i =  \sum_{i=1}^{2n+1+l} \beta_i } .$ The exponent of $ z_{p}$ in $ M $
	is $  \alpha_{2n+1+l} = \beta_{2n+1+l}   .$ Therefore  $ \displaystyle{ \sum_{i=1}^{2n+l} \alpha_i =  \sum_{i=1}^{2n+l} \beta_i } .$ 
	Here    $  g_1^{\alpha_1} \cdots  g_{2n+1}^{\alpha_{2n+1}} g_{2n+2}^{\alpha_{2n+2}} \cdots g_{2n+l}^{\alpha_{2n+l}}  = g_1^{\beta_1} \cdots  g_{2n+1}^{\beta_{2n+1}}g_{2n+2}^{\beta_{2n+2}} \cdots g_{2n+l}^{\beta_{2n+l}}  $ is a minimal generator of  $I(G^\prime )^{s- \alpha_{2n+1+l} }$. Thus by  induction hypothesis,  $\alpha_{i} =  \beta_{i}$    for $ 1 \leq i \leq 2n + l.     $  
\end{proof}
    
\begin{remark}\label{unique3}
	Let $ I = I(G) $ be   the edge ideal of $G$. Here $|E(G)| = 2n+1 + l,$ i.e., $|\displaystyle{\mathcal{G}(I)| = 2n+1+l}$. Hence by Lemma \ref{unique2} for all $s \geq 1,$  
 $$|\displaystyle{\mathcal{G}(I^s)| = \multiset{2n+1+l}{s} = \binom{2n+l+s}{s} }  .$$     	
	
\end{remark}

The next three lemmas are very important to prove our main result in Theorem \ref{sd.unicyclic}.

\begin{lemma}\label{cont.1}
	Let $ I = I(G) $ be   the edge ideal of $G$. Let $ s \in   \mathbb{N}  $ and write $ s = k(n + 1) + r  $ for some $ k \in \mathbb{Z}  $ and
$ 0 \leq  r    \leq  n. $  Let $c = x_1\cdots x_{2n+1}.$   Then
 $$   I^{s-(n+1)} \cap  I^{s-2(n+1)}(c) = I^{s-2(n+1)} (I^{n+1}\cap (c)) .$$

\end{lemma}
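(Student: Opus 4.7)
The strategy is to treat both sides as monomial ideals and verify the two inclusions separately.

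The inclusion $\supseteq$ is immediate: since $I^{n+1}\cap(c)\subseteq I^{n+1}$ and $I^{n+1}\cap(c)\subseteq(c)$, multiplying by $I^{s-2(n+1)}$ yields containment in both $I^{s-2(n+1)}\cdot I^{n+1}=I^{s-(n+1)}$ and $I^{s-2(n+1)}(c)$, hence in their intersection.

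For the nontrivial inclusion $\subseteq$, let $M$ be a monomial in $I^{s-(n+1)}\cap I^{s-2(n+1)}(c)$. From $M\in I^{s-2(n+1)}(c)$ I can write $M=g_0\cdot c\cdot u$ for some product $g_0$ of $m:=s-2(n+1)$ edges of $G$ and some monomial $u$. It suffices to produce a (possibly different) product $g_0'$ of $m$ edges with $cg_0'\mid M$ and $M/g_0'\in I^{n+1}$, since then $M=g_0'\cdot(M/g_0')\in I^{s-2(n+1)}(I^{n+1}\cap(c))$. I will split into two cases based on whether $u$ touches the cycle.

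If some cycle vertex $x_i$ divides $u$, I take $g_0':=g_0$ and verify $cu\in I^{n+1}$ by an explicit edge decomposition of $cx_i$: the two cycle edges $e_{i-1}=\{x_{i-1},x_i\}$ and $e_i=\{x_i,x_{i+1}\}$ absorb the factor $x_{i-1}x_i^2x_{i+1}$, while the remaining $2n-2$ cycle vertices form a path in $C_{2n+1}$ (obtained by deleting $x_{i-1},x_i,x_{i+1}$) and therefore admit a perfect matching of $n-1$ alternate cycle edges; the resulting $n+1$ cycle edges multiply to exactly $cx_i$, so $cu=(cx_i)(u/x_i)\in I^{n+1}$.

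The main obstacle is the case where $u$ has no cycle vertex, which forces $\alpha_{x_i}(g_0)=\alpha_{x_i}(M)-1$ for every $x_i\in V(C_{2n+1})$; here $g_0$ itself need not work. Using $M\in I^{s-(n+1)}$, I pick an edge factorization $M=p_1\cdots p_{s-(n+1)}\cdot r$ and set $P:=\{p_1,\ldots,p_{s-(n+1)}\}$ (a multiset) and $S_r:=\{x_i\in V(C_{2n+1}):\alpha_{x_i}(r)=0\}$. My plan is to select a sub-multiset $Q\subseteq P$ of size $n+1$ whose edges cover every vertex of $S_r$, and then take $g_0':=P\setminus Q$; then $M/g_0'=Q\cdot r$ lies in $I^{n+1}\cap(c)$, because every cycle vertex is supplied either by $r$ (when outside $S_r$) or by some edge of $Q$. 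Since a minimum edge cover of $V(C_{2n+1})$ in $G$ has exactly $n+1$ edges (Remarks~\ref{remark1} and~\ref{remark2}), the target cardinality of $Q$ is feasible; the combinatorial subtlety is to realise such a cover \emph{within} $P$ itself. For this, I will leverage the second hypothesis $M/c\in I^m$ (equivalent to $M\in I^{s-2(n+1)}(c)$): it forces the cycle content of $P$ and $r$ to be structured enough that, after possibly adjusting the decomposition $P$, the required edge cover of $S_r$ is contained in $P$, and any remaining $P$-edges may be used to pad $Q$ to size exactly $n+1$.
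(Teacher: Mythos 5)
Your reverse inclusion and your Case A are sound: the decomposition of $cx_i$ as the product of the two cycle edges at $x_i$ with a perfect matching of the remaining $2n-2$ cycle vertices is exactly a Type-I minimum edge cover in the sense of Remark \ref{remark1}, so $cx_i\in\mathcal{G}(I^{n+1})\cap(c)$ and the case where a cycle vertex divides $u$ goes through. (This is the paper's Case (1), stated there for a single extra vertex in $N[V(C_{2n+1})]$; note your split sends a $u$ containing a \emph{neighbour} of the cycle into Case B, although the same whisker-plus-matching cover would dispose of it immediately.)

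The difficulty is that Case B, which is the entire content of the lemma, is not proved. The existence of a sub-multiset $Q\subseteq P$ of size $n+1$ whose edges cover $S_r$ is precisely the statement that has to be established, and the sentence asserting that the hypothesis $M/c\in I^{s-2(n+1)}$ ``forces the cycle content of $P$ and $r$ to be structured enough'' is a promise, not an argument. The obstruction is genuine: a priori the edges of $P$ meeting the cycle could, for instance, all be pendant edges covering one cycle vertex each, in which case no $n+1$ of them cover $S_r$; excluding such configurations requires combining both membership hypotheses with the global structure of $G$, namely that $G\setminus E(C_{2n+1})$ is a forest so that every vertex outside $N[V(C_{2n+1})]$ reaches the cycle by a unique path. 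That is exactly where the paper does its real work: it reduces to minimal generators of $I^{s-(n+1)}$ (so the cofactor of $g_0c$ is a single variable $y_{i_1}$), invokes the unique factorization of Lemma \ref{unique2}, argues that every edge on the path from $y_{i_1}$ to the cycle must occur in the factorization of $h$ --- otherwise some variable of $g$ remains unpaired, contradicting $g\in\mathcal{G}(I^{s-(n+1)})$ --- and then explicitly re-pairs the variables along that path so that the surplus lands on a vertex of $N[V(C_{2n+1})]$ and can be absorbed into $cy_{i_t}$ or $cx_i$. Some argument of this kind (or a correct selection principle with proof) must replace your final sentence before Case B can be accepted; as written it is a gap at the heart of the proof rather than an alternative route to it.
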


\begin{proof}
Here $  ``\supseteq "$  is obvious. Now we prove the other containment.
\vspace*{0.1cm}\\  
Suppose $g \in \mathcal{G}(I^{s-(n+1)}) \cap  I^{s-2(n+1)}(c) $. Here $ \deg(g) = 2 (s-(n+1))= 2 (s-2(n+1)) +   (2n+1)  + 1.$ So    
   $g=hcy_{i_1}$ for some $h     \in  \mathcal{G}(I^{s-2(n+1)})$ and $y_{i_1}$ $\in V(G).$
  \vspace*{0.1cm}\\
  \textbf{Case (1)} Assume that  $y_{i_1}    \in N[V(C_{2n+1})]. $\\
   Note that $cy_{i_1}   $ can be expressed as the product of exactly $n+1$ minimal generators of $I$. Thus  $ cy_{i_1} \in  \mathcal{G}(I^{n+1}).$ Also $ cy_{i_1} \in  \mathcal{G}(I^{n+1})\cap (c).$ Hence 
   $g=h(cy_{i_1})\in I^{s-2(n+1)} (I^{n+1}\cap (c)).$  
  \vspace*{0.1cm}\\ 
  \textbf{Case (2)} Assume that  $y_{i_1}    \notin N[V(C_{2n+1})]. $\\ Here $g=hcy_{i_1}$ and $h     \in  \mathcal{G}(I^{s-2(n+1)})$. 
Suppose $y_{i_1} $ is in some tree $T_1$ of the forest $T$ where $V(C_{2n+1}) \cap V(T_1) = \{x_i\}$ for some $i\in  [2n+1].$ As $T_1$ is a tree, there is only one path $P$ from $y_{i_1} $ to $x_i.$ Let $P = y_{i_1} y_{i_2}\ldots y_{i_t} x_i$ be that path whose length is $t$. By Lemma \ref{unique2}, $h$ has a unique representation. Let $h = g_1 g_2\cdots g_{s-2(n+1)}$ where each $g_i \in \mathcal{G}(I)$ and all $g_i$'s may not be distinct. Let $M(h)$ be the collection of $g_i$'s for $1 \leq i \leq {s-2(n+1)}.$  Then $g=hcy_{i_1} = g_1 g_2\cdots g_{s-2(n+1)} cy_{i_1}.$
\vspace*{0.1cm}\\  
$\textbf{\underline{\mbox{Case  (2.a)}}}$ Assume that the  path $P$ is  of odd length. Then $t$ is odd.\\
 Suppose $y_{i_2} y_{i_3},y_{i_4} y_{i_5}, \ldots, y_{i_{t-1}} y_{i_t}$ are present in    the collection  $ M(h) .$ Without loss of generality let   they are $ g_1, g_2,\ldots, g_{\frac{t-1}{2}}.$ Then
 \begin{align*}
 g&=hcy_{i_1}\\
  &= g_1 g_2\cdots g_{s-2(n+1)} cy_{i_1}\\
  &= g_1 g_2\cdots g_{\frac{t-1}{2}}g_{\frac{t-1}{2}+1} \cdots g_{s-2(n+1)} cy_{i_1}\\
  &= g_{\frac{t-1}{2}+1} \cdots g_{s-2(n+1)} y_{i_1} g_1 g_2\cdots g_{\frac{t-1}{2}}c\\
  &= g_{\frac{t-1}{2}+1} \cdots g_{s-2(n+1)} y_{i_1} (y_{i_2} y_{i_3})(y_{i_4} y_{i_5}) \cdots  (y_{i_{t-1}} y_{i_t}) c\\
  &= g_{\frac{t-1}{2}+1} \cdots g_{s-2(n+1)}( y_{i_1} y_{i_2}) (y_{i_3}y_{i_4})  \cdots ( y_{i_{t-2}} y_{i_{t-1}}) cy_{i_t} .    
 \end{align*} 
 Here $ g_{\frac{t-1}{2}+1} \cdots g_{s-2(n+1)}( y_{i_1} y_{i_2}) (y_{i_3}y_{i_4})  \cdots ( y_{i_{t-2}} y_{i_{t-1}}) \in I^{s -2(n+1)- \frac{t-1}{2} }I^{ \frac{t-1}{2} } = I^{s-2(n+1)}.$ Since  $y_{i_t}    \in N[V(C_{2n+1})], $   $ cy_{i_t} \in  \mathcal{G}(I^{n+1})$    by  \textbf{Case (1)} and $g  \in I^{s-2(n+1)} (I^{n+1}\cap (c)).$
\vspace*{0.1cm}\\
 Suppose  $y_{i_2} y_{i_3}$ is absent in  the  collection  $ M(h) .$ Since $g \in \mathcal{G}({I}^{s-(n+1)})$, there exists some minimal generator  $g_j=y_{j_1} y_{j_2}$ of $I$ where $y_{j_1} \in   N_G(y_{i_1})$ such that  
  $y_{i_1}$  pair up  with   $y_{j_1} $ to get some element of $\mathcal{G}(I)$ and $ y_{j_2}$ remains unpaired. If there is some minimal generator  $g_k=y_{j_3} y_{j_4}$ of $I$ where $y_{j_3} \in   N_G(y_{j_2}),$  then $y_{j_2}$ can pair up   with
 $y_{j_3} $ to get some element of $\mathcal{G}(I)$ and 
  $ y_{j_4}$ again remains unpaired.  We continue this process. Since
   $G$ is a unicyclic graph, 
   there is no  path from  $y_{i_1}$ to any vertex of $N[V(C_{2n+1})]$ along   $y_{j_1}$. Thus if $y_{i_2} y_{i_3}$ is absent in    the collection  $ M(h) ,$ then one variable always remains unpaired in $g.$ Since $g \in \mathcal{G}(I^{s-(n+1)})  ,$ its a contradiction.
 
   By the same argument, if any one of  $y_{i_4} y_{i_5},y_{i_6} y_{i_7},\ldots,y_{i_{t-1}} y_{i_t}$ is absent in    the collection  $ M(h) ,$ then one variable remains unpaired in $g.$     Since $g \in \mathcal{G}(I^{s-(n+1)})  ,$  its a contradiction.
     $\textbf{\underline{\mbox{Case  (2.b)}}}$   Assume that the  path $P$ is  of even length. Then $t$ is even.\\
  Suppose $y_{i_2} y_{i_3},y_{i_4} y_{i_5}, \ldots,y_{i_{t-2}} y_{i_{t-1}}, y_{i_{t}} x_{i}$ are present in    the collection  $ M(h) .$ Without loss of generality let they are $ g_1, g_2,\ldots,g_{\frac{t}{2}-1}, g_{\frac{t}{2}}.$ Then    
  \begin{align*}
  g&=hcy_{i_1}\\
  &= g_1 g_2\cdots g_{s-2(n+1)} cy_{i_1}\\
  &= g_1 g_2\cdots g_{\frac{t}{2}}g_{\frac{t}{2}+1} \cdots g_{s-2(n+1)} cy_{i_1}\\  
  &= g_{\frac{t}{2}+1} \cdots g_{s-2(n+1)} y_{i_1} g_1 g_2\cdots g_{\frac{t}{2}-1} g_{\frac{t}{2}}c\\
  &= g_{\frac{t}{2}+1} \cdots g_{s-2(n+1)} y_{i_1} (y_{i_2} y_{i_3})(y_{i_4} y_{i_5}) \cdots  (y_{i_{t-2}} y_{i_{t-1}})(y_{i_{t}} x_{i}) c\\    
  &= g_{\frac{t}{2}+1} \cdots g_{s-2(n+1)}( y_{i_1} y_{i_2}) (y_{i_3}y_{i_4})  \cdots ( y_{i_{t-1}} y_{i_{t}}) cx_{i} .    
  \end{align*} 
  Here $ g_{\frac{t}{2}+1} \cdots g_{s-2(n+1)}( y_{i_1} y_{i_2}) (y_{i_3}y_{i_4})  \cdots ( y_{i_{t-1}} y_{i_{t}}) \in I^{s -2(n+1)- \frac{t}{2} }I^{ \frac{t}{2} } = I^{s-2(n+1)}.$ Since  $x_{i}    \in N[V(C_{2n+1})], $  $ cx_{i} \in  \mathcal{G}(I^{n+1})$    by  \textbf{Case (1)} and $g  \in I^{s-2(n+1)} (I^{n+1}\cap (c)).$
 \vspace*{0.1cm}\\
 By the same argument as in $\textbf{\underline{\mbox{Case  (2.a)}}}$, if any one of  $y_{i_2} y_{i_3},y_{i_4} y_{i_5},\ldots,y_{i_{t}} x_{i}$ is absent in    the collection  $ M(h) ,$  we get a contradiction.                   
\end{proof}  
\begin{lemma}\label{uc1}      
	Let $ I = I(G) $ be   the edge ideal of $G$. Let $ s \in   \mathbb{N}  $ and write $ s = k(n + 1) + r  $ for some $ k \in \mathbb{Z}  $ and
	$ 0 \leq  r    \leq  n. $ Let $c = x_1\cdots x_{2n+1}.$ Then  for $2 \leq t \leq k-1 ,$ 
	$$  \mathcal{G}({I}^{s-(n+1)}) \cap  {I}^{s-(n+1)-t(n+1)}(c)^{t}   \subseteq {I}^{s-2(n+1)}(c).$$
\end{lemma}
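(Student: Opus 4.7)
My plan is to combine Lemma \ref{cont.1} (with a shifted value of $s$) with the unique edge factorization of minimal generators of $I^{s-(n+1)}$ provided by Lemma \ref{unique2}. Substituting $s'=s-(t-1)(n+1)$ into Lemma \ref{cont.1} yields
\[
I^{s-t(n+1)}\cap I^{s-(t+1)(n+1)}(c) \;=\; I^{s-(t+1)(n+1)}\bigl(I^{n+1}\cap (c)\bigr).
\]
Any $g\in \mathcal{G}(I^{s-(n+1)})\cap I^{s-(t+1)(n+1)}(c)^t$ lies in the left-hand side because $I^{s-(n+1)}\subseteq I^{s-t(n+1)}$ (as $t\ge 1$) and $c^t\in (c)$ gives $I^{s-(t+1)(n+1)}c^t\subseteq I^{s-(t+1)(n+1)}(c)$; hence $g$ also lies in the right-hand side.

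Since $g$ is a monomial, I would then write $g=m\cdot q$ for monomials $m\in I^{s-(t+1)(n+1)}$ and $q\in I^{n+1}\cap (c)$. The minimal generators of $I^{n+1}\cap (c)$ are exactly the monomials $cy$ for which $cy$ is the product of a minimum edge cover of $C_{2n+1}$ in $G$; this forces $y\in V(C_{2n+1})\cup N_G(V(C_{2n+1}))$ (a Type-I or Type-II cover in the terminology of Section 3). Hence $q=\mu\cdot cy$ for some monomial $\mu$ and admissible $y$, and
\[
g \;=\; (m\mu)\cdot cy \;=\; (m\mu\,y)\cdot c.
\]
A degree check gives $\deg(m\mu)=2(s-(n+1))-(2n+2)=2(s-2(n+1))$, exactly the minimum degree of $I^{s-2(n+1)}$; so it suffices to prove that $m\mu$ is itself a product of $s-2(n+1)$ edges, for then $m\mu y\in I^{s-2(n+1)}$ and $g\in I^{s-2(n+1)}(c)$.

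This last step is the main obstacle. By Lemma \ref{unique2}, $g$ admits a unique expression $g=F_1\cdots F_{s-(n+1)}$ as a product of edges; if the $n+1$ edges $N_1,\ldots,N_{n+1}$ whose product equals $cy$ sit as a sub-multiset inside $\{F_1,\ldots,F_{s-(n+1)}\}$, then $m\mu=g/(cy)$ is automatically the product of the remaining $s-2(n+1)$ edges. The challenge is that $cy\mid g$ as a monomial does not automatically force this containment (e.g.\ in $C_3$, $e_2$ divides $e_1e_3$ in $R$ while $e_2\notin\{e_1,e_3\}$). The hypothesis $t\ge 2$ is essential here: it forces every cycle vertex to have multiplicity at least two in $g$, providing enough flexibility to select the particular generator $cy$ of $I^{n+1}\cap (c)$ so that its $n+1$ edges are all drawn from the unique factorization of $g$. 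This combinatorial selection, built on the structure of minimum edge covers developed in Section 3, is the crux of the argument and will likely require case analysis on whether a Type-I or Type-II minimum edge cover can be read off from $\{F_1,\ldots,F_{s-(n+1)}\}$.
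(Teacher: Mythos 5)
There is a genuine gap: your argument stops exactly where the proof has to begin. After reducing to $g=(m\mu)\cdot cy$ with $\deg(m\mu)=2(s-2(n+1))$, everything hinges on showing that one can \emph{choose} $y$ with $cy\in\mathcal{G}(I^{n+1})$ and $cy\mid g$ so that $g/(cy)$ is actually a product of $s-2(n+1)$ edges. You correctly observe that mere divisibility does not give this (your $C_3$ example), but you then offer only a heuristic --- ``$t\ge 2$ forces every cycle vertex to have multiplicity at least two, providing enough flexibility'' --- with no argument for why that flexibility suffices. Multiplicity $\ge 2$ on the cycle vertices says nothing about the vertices of $g$ lying in the trees hanging off $C_{2n+1}$, and that is where the real danger lies: the obstruction to $g/(cy)\in I^{s-2(n+1)}$ comes from tree variables that cannot be paired into edges once $cy$ is removed. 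A second, smaller unsupported claim is that the minimal generators of $I^{n+1}\cap(c)$ are exactly the monomials $cy$; a priori the intersection of two monomial ideals can have minimal generators (lcm's) that are not of this form, so this needs justification before you can write $q=\mu\cdot cy$.

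For comparison, the paper does not apply Lemma \ref{cont.1} at a shifted level at all. It proves the one-step descent $\mathcal{G}(I^{s-(n+1)})\cap I^{s-(n+1)-t(n+1)}(c)^t\subseteq I^{s-(n+1)-(t-1)(n+1)}(c)^{t-1}$ and iterates. Within each step it writes $g=hc^ty_{i_1}\cdots y_{i_t}$ with $h\in\mathcal{G}(I^{s-(n+1)-t(n+1)})$ and runs a case analysis: if some $y_{i_j}\in N[V(C_{2n+1})]$ it absorbs $cy_{i_j}$ into $I^{n+1}$; if two of the $y_{i_j}$ can be absorbed into $h$ it peels off $x_1\cdots x_{2n}\in\mathcal{G}(I^n)$; and otherwise it traces the path from $y_{i_1}$ through the forest back to the cycle, using unicyclicity and the unique factorization of $h$ (Lemma \ref{unique2}) to show $hcy_{i_1}$ factors into $s-(n+1)-t(n+1)+n+1$ edges. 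That path-tracing/pairing argument is precisely the ``combinatorial selection'' you defer, so as written your proposal does not establish the lemma.
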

\begin{proof}
	First we want to prove that	for $2 \leq t \leq k-1 ,$
	$$  \mathcal{G}({I}^{s-(n+1)}) \cap  {I}^{s-(n+1)-t(n+1)}(c)^{t}   \subseteq I^{s-(n+1)-(t-1)(n+1)}(c)^{t-1}.$$ 
	Let $g \in  \mathcal{G}({I}^{s-(n+1)}) \cap  {I}^{s-(n+1)-t(n+1)}(c)^{t}$ for some $t$ where $2 \leq t \leq k-1 .$ Here $ \deg(g) = 2 (s-(n+1))= 2 (s-(n+1)-t(n+1)) +   t(2n+1)  + t.$ So $g = hc^t{y_{i_1}\cdots   y_{i_{t}}}$ for some $h \in \mathcal{G}({I}^{s-(n+1)-t(n+1)})$ and $y_{i_j} \in V(G)$ for $1 \leq j \leq t.$
	\vspace*{0.1cm}\\ 
	\textbf{Case (1)}  Assume that there exists one  $y_{i_j}$ $ \in   N[V(C_{2n+1})]$ for some $j\in [t].$    
	\vspace*{0.1cm}\\
	Without loss of generality let it be  $y_{i_1}$. 
	Then $ g = hc^t{y_{i_1}\cdots   y_{i_{t}}}
	=h(c{y_{i_{1}}})(c^{t-1} ){{y_{i_2}}{y_{i_{3 }}}\cdots y_{i_{t}}} .$\\ 
	Note that $cy_{i_{1}}$  $\in \mathcal{G}(I^{n+1}).$ Hence
	$ 	g \in I^{s-(n+1)-t(n+1)}{I^{n+1}}(c)^{t-1}={I}^{s-(n+1)-(t-1)(n+1)}(c)^{t-1} .$ 
	\vspace*{0.1cm}\\
	\textbf{Case (2)} Assume that  no $y_{i_j} \in   N[V(C_{2n+1})]$  where $1 \leq j \leq t.$
	\vspace*{0.1cm}\\
	Then   
	$g = hc^t{y_{i_1}\cdots   y_{i_{t}}}$ for some $h \in \mathcal{G}({I}^{s-(n+1)-t(n+1)})$ and $y_{i_j} \in  V(G) \setminus N[V(C_{2n+1})]$  for $1 \leq j \leq t.$ By Lemma \ref{unique2}, $h$ has a unique representation. Let $h = g_1 g_2\cdots g_{s-(n+1)-t(n+1)}$ where each $g_i \in \mathcal{G}(I)$ and all $g_i$'s may not be distinct.
 Then $g=hc^t{y_{i_1}\cdots   y_{i_{t}}} = g_1 g_2\cdots g_{s-(n+1)-t(n+1)}\\ c^t{y_{i_1}\cdots   y_{i_{t}}}.$
	\vspace*{0.1cm}\\ 
	$\textbf{\underline{\mbox{Case  (2.a)}}}$ Assume that there exist  $y_{i_j}$ and  $y_{i_k}$ for some $j,k \in [t]$ such that $y_{i_j}h y_{i_k}$ can be expressed as the   product of $ {s-(n+1)-t(n+1)} + 1   $ minimal generators of $I.$  
	\vspace*{0.1cm}\\
	Without loss of generality, let  $y_{i_j} = y_{i_1}$ and  $y_{i_k}= y_{i_2}$. Then $y_{i_1}h y_{i_2}\in \mathcal{G}({I}^{s-(n+1)-t(n+1)+1}).$ Here we can express
	\begin{align*}
	g&=hc^t{y_{i_1}\cdots   y_{i_{t}}}\\
	&=(y_{i_1}h y_{i_2})c^t{y_{i_3}\cdots   y_{i_{t}}}\\
	&=(y_{i_1}h y_{i_2})(c)c^{t-1}{y_{i_3}\cdots   y_{i_{t}}}\\
	&=(y_{i_1}h y_{i_2})(x_1\cdots x_{2n}x_{2n+1})c^{t-1}{y_{i_3}\cdots   y_{i_{t}}}\\
	&=(y_{i_1}h y_{i_2})(x_1\cdots x_{2n})c^{t-1}{y_{i_3}\cdots   y_{i_{t}}}x_{2n+1}.
	\end{align*}
	Since $x_1\cdots x_{2n}= (x_1x_2)\cdots (x_{2n-1}x_{2n}) \in \mathcal{G}({I}^{n})$, we have \begin{align*}
	g &= (y_{i_1}h y_{i_2})(x_1\cdots x_{2n})c^{t-1}{y_{i_3}\cdots   y_{i_{t}}}x_{2n+1}\\&
	\in {I}^{s-(n+1)-t(n+1)+1}{I}^{n}(c)^{t-1}\\
	&={I}^{s-(n+1)-(t-1)(n+1)}(c)^{t-1}.
	\end{align*}    
	$\textbf{\underline{\mbox{Case  (2.b)}}}$ Assume that there are no  $y_{i_j}$ and  $y_{i_k}$ for  $j,k \in [t]$ such that $y_{i_j}h y_{i_k}$ can be expressed as the   product of $ {s-(n+1)-t(n+1)} + 1   $ minimal generators of $I.$
	\vspace*{0.1cm}\\  
	Here $y_{i_1} \notin   N[V(C_{2n+1})].$ 	Since $g \in \mathcal{G}({I}^{s-(n+1)})$, there exists some minimal generator $g_j=y_{j_1} y_{j_2}$ of $I$ where $y_{j_1} \in   N_G(y_{i_1})$ such that 
	$y_{i_1}$  pair up  with $y_{j_1}  $ to get some element of $\mathcal{G}(I)$ and $ y_{j_2}$ remains unpaired.\\ 
	If there is $y_{i_{r_1}} \in   N_G(y_{j_2})$ for some $r_1\in [t]$, then $y_{j_2}$  can pair up  with $y_{i_{r_1}}$ and $y_{i_1}g_jy_{i_{r_1}}=y_{i_1}(y_{j_1} y_{j_2})y_{i_{r_1}}=(y_{i_1}y_{j_1}) (y_{j_2}y_{i_{r_1}})  \in \mathcal{G}({I}^{2})$. Thus  $y_{i_1}h y_{i_{r_1}}\in \mathcal{G}({I}^{s-(n+1)-t(n+1)+1}),$ i.e., a contradiction by our assumption.\\
	If  $y_{j_2} \in   N[V(C_{2n+1})],$ then $y_{j_2} $ can combine with $c$ to get some element of $\mathcal{G}(I^{n+1})$ and the proof follows by the same argument as in \textbf{Case (1)}. Otherwise,   there exists some minimal generator  $g_k=y_{j_3} y_{j_4}$ of $I$ where $y_{j_3} \in   N_G(y_{j_2})$  such that $y_{j_2}$  pair up  with
	$y_{j_3}$ to get some element of $\mathcal{G}(I)$  and 
	$ y_{j_4}$ again remains unpaired.\\
	If there is $y_{i_{r_2}} \in   N_G(y_{j_4})$ for some $r_2\in [t]$. Then $y_{j_4}$  can pair up  with $y_{i_{r_2}}$ and $y_{i_1}g_jg_ky_{i_{r_2}}=y_{i_1}(y_{j_1} y_{j_2})(y_{j_3} y_{j_4})y_{i_{r_2}}=(y_{i_1}y_{j_1}) (y_{j_2}y_{j_3})( y_{j_4}y_{i_{r_2}})  \in \mathcal{G}({I}^{3})$. Thus  $y_{i_1}h y_{i_{r_2}}\in \mathcal{G}({I}^{s-(n+1)-t(n+1)+1}),$ i.e., again a contradiction by our assumption.\\
	If  $y_{j_4} \in   N[V(C_{2n+1})],$ then $y_{j_4} $ can combine with $c$  to get some element of $\mathcal{G}(I^{n+1})$ and the proof follows by the same argument as in \textbf{Case (1)}. Otherwise,   there exists some minimal generator  $g_l=y_{j_5} y_{j_6}$ of $I $ where $y_{j_5} \in   N_G(y_{j_4})$  such that $y_{j_4}$  pair up  with
	$y_{j_5}$ to get some element of $\mathcal{G}(I)$ and 
	$ y_{j_6}$ again remains unpaired.\\  If we keep  continue this process, we will find that the unpaired variable can not pair up with any   $y_{i_{r}}$ for some $r\in [t]$. 
	Since $g \in \mathcal{G}({I}^{s-(n+1)})$,
	the unpaired variable must combine with $c$ after some steps. 
	Thus by the same procedure as in Lemma \ref{cont.1},   $ hc{y_{i_1}} $ can be expressed as the product of $s-(n+1)-t(n+1) + n+ 1$ minimal generators of $I,$ i.e., $ hc{y_{i_1}}   \in \mathcal{G}({I}^{s-(n+1)-t(n+1) + n+ 1})=\mathcal{G}({I}^{s-(n+1)-(t-1)(n+1) }).$
	Then  $$  g=hc^t{y_{i_1}\cdots   y_{i_{t}}} = (hc{y_{i_1}} )c^{t-1}{y_{i_2}\cdots   y_{i_{t}}} \in  {I}^{s-(n+1)-(t-1)(n+1)}(c)^{t-1}.$$
	Hence 
	$ \mathcal{G}({I}^{s-(n+1)}) \cap  {I}^{s-(n+1)-t(n+1)}(c)^{t}  \subseteq {I}^{s-(n+1)-(t-1)(n+1)}(c)^{t-1}$ for $2 \leq t \leq k-1 .$\\
	For a fixed $t,$
	\begin{align*}
	&\mathcal{G}({I}^{s-(n+1)}) \cap  {I}^{s-(n+1)-t(n+1)}(c)^{t}  \subseteq {I}^{s-(n+1)-(t-1)(n+1)}(c)^{t-1},\\
	&\mathcal{G}({I}^{s-(n+1)}) \cap  {I}^{s-(n+1)-(t-1)(n+1)}(c)^{t-1}  \subseteq {I}^{s-(n+1)-(t-2)(n+1)}(c)^{t-2},\\
	&\hspace*{2cm}\vdots\hspace*{3cm}\vdots\hspace*{3cm}\vdots\\  
	&\mathcal{G}({I}^{s-(n+1)}) \cap  {I}^{s-(n+1)-2(n+1)}(c)^{2}  \subseteq {I}^{s-(n+1)-(n+1)}(c) = {I}^{s-2(n+1)}(c).
	\end{align*} 
	Therefore for $2 \leq t \leq k-1 ,$
	\begin{align*}
	\mathcal{G}({I}^{s-(n+1)}) \cap  {I}^{s-(n+1)-t(n+1)}(c)^{t}  &\subseteq \mathcal{G}({I}^{s-(n+1)}) \cap {I}^{s-(n+1)-(t-1)(n+1)}(c)^{t-1}\\
	&\subseteq \mathcal{G}({I}^{s-(n+1)}) \cap  {I}^{s-(n+1)-(t-2)(n+1)}(c)^{t-2}\\
	&\hspace*{0.2cm}\vdots\hspace*{2.2cm}\vdots\hspace*{2.2cm}\vdots\\ 
	&\subseteq \mathcal{G}({I}^{s-(n+1)}) \cap  {I}^{s-(n+1)-2(n+1)}(c)^{2}  \\
	&\subseteq   {I}^{s-(n+1)-(n+1)}(c) ={I}^{s-2(n+1)}(c).
	\end{align*}  
	Hence 	$  \mathcal{G}({I}^{s-(n+1)}) \cap  {I}^{s-(n+1)-t(n+1)}(c)^{t} $ $  \subseteq  {I}^{s-2(n+1)}(c)$  for $2 \leq t \leq k-1 .$ 
\end{proof}

\begin{remark}\label{ucc3}
Let $ d_{n+1},d_{n+2},\ldots,d_{2n+1+l} $ be the number of   edge sets of  size $ n+1,n+2,\ldots,2n+1+l ,$ respectively such that each edge set contains a Type-I or Type-II minimum   edge cover for $C_{2n+1}$  in $ G. $ Then the values of $ d_{n+1},d_{n+2},\ldots,d_{2n+1+l} $ are the same as computed in Corollary  \ref{ucc3.}.

\end{remark}

\begin{lemma}\label{uc2}  
	Let $ I = I(G) $ be   the edge ideal of $G$. Let $ s \in   \mathbb{N}  $ and write $ s = k(n + 1) + r  $ for some $ k \in \mathbb{Z}  $ and
$ 0 \leq  r    \leq  n. $ Let $c = x_1\cdots x_{2n+1}.$ Let $ d_{n+1},d_{n+2},\ldots,d_{2n+1+l} $ are same as defined in Remark \ref{ucc3}.   Let $ \mathscr{G}_{s-(n+1)} $ be the set   of elements of $ \mathcal{G}(I^{s-(n+1)}) $ which are not multiple of some element of   $ \mathcal{G}(I^{s-2(n+1)}(c))$  for $ s \geq 2n+2.$  Then for $ s \geq 2n+2,$ 
\vspace*{0.1cm}\\
$\displaystyle{ |\mathscr{G}_{s-(n+1)}| =     \multiset{2n+1+l}{s-(n+1)}  - \bigg[    d_{n+1}\multiset{n+1}{s-2(n+1)}+d_{n+2}\multiset{n+2}{s-2(n+1)-1} +\cdots + }\vspace*{0.2cm}\\
\hspace*{1.9cm}\displaystyle{   d_{2n+1+l} \multiset{2n+1+l}{s-2(n+1)-n-l}        \bigg]}.      
$ 
	
\end{lemma}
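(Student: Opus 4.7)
The plan is to compute $|\mathcal{G}(I^{s-(n+1)})|$ and subtract the cardinality of the ``bad'' subset $\mathcal{B} := \mathcal{G}(I^{s-(n+1)}) \cap I^{s-2(n+1)}(c)$. By Remark~\ref{unique3} the total is $\multiset{2n+1+l}{s-(n+1)}$, so the task reduces to enumerating $\mathcal{B}$.

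Using Lemma~\ref{unique2}, I identify every $g \in \mathcal{G}(I^{s-(n+1)})$ with a unique multiset $M_g$ of $s-(n+1)$ edges and aim to prove the key equivalence: $g \in \mathcal{B}$ if and only if the support of $M_g$ contains, as a subset, a Type-I or Type-II minimum edge cover for $C_{2n+1}$ in $G$. For $(\Leftarrow)$, if $E' \subseteq \mathrm{supp}(M_g)$ is such a min edge cover of size $n+1$, then the explicit structure described in Remarks~\ref{remark1} and~\ref{remark2} gives $\prod_{e\in E'} e = cy$ for some variable $y$; the remaining $s-2(n+1)$ edges of $M_g$ multiply to some $h \in \mathcal{G}(I^{s-2(n+1)})$, so $g = h\cdot cy$ lies in $I^{s-2(n+1)}(c)$. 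For $(\Rightarrow)$, Lemma~\ref{cont.1} gives $g \in I^{s-2(n+1)}\bigl(I^{n+1} \cap (c)\bigr)$; a degree balance then forces the monomial factorization $g = p \cdot q$ with $p \in \mathcal{G}(I^{s-2(n+1)})$ and $q$ a minimal generator of $I^{n+1} \cap (c)$ of degree exactly $2n+2$. Such a $q$ must equal $cy$ for some variable $y$ with $cy \in \mathcal{G}(I^{n+1})$, i.e.\ a product of $n+1$ distinct edges forming a minimum edge cover. The uniqueness in Lemma~\ref{unique2} then forces $M_g$ to decompose as the multiset union of $M_p$ with those $n+1$ cover edges, so $\mathrm{supp}(M_g)$ contains the asserted min edge cover.

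With this bijection in hand, I count $|\mathcal{B}|$ by partitioning over the possible support $T$ of $M_g$. If $T \subseteq E(G)$ has size $n+1+k$ and contains a Type-I or Type-II min edge cover for $C_{2n+1}$ in $G$, then the number of multisets of size $s-(n+1)$ with support exactly $T$ is $\multiset{n+1+k}{s-2(n+1)-k}$: each of the $n+1+k$ edges of $T$ must appear at least once, and the remaining $s-2(n+1)-k$ slots form an arbitrary multiset on $T$. By Remark~\ref{ucc3} the number of such edge sets $T$ is $d_{n+1+k}$. Summing over $k = 0, 1, \ldots, n+l$ and subtracting from $\multiset{2n+1+l}{s-(n+1)}$ yields the claimed formula.

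The main obstacle is the $(\Rightarrow)$ half of the key equivalence: an arbitrary expression $g = hcy$ extracted just from $g \in I^{s-2(n+1)}(c)$ need not satisfy $cy \in I^{n+1}$, in which case one cannot directly read off a minimum edge cover from $M_g$. Lemma~\ref{cont.1} is precisely what permits rerouting the ``extra variable'' along the trees hanging off $C_{2n+1}$ so that, at the cost of adjusting $h$, one may take $cy \in \mathcal{G}(I^{n+1})$, thereby exhibiting a genuine minimum edge cover sitting inside $\mathrm{supp}(M_g)$.
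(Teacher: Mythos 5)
Your proposal is correct and follows essentially the same route as the paper: both use Lemma \ref{cont.1} to reduce membership in $I^{s-2(n+1)}(c)$ to a factorization $q=q_1q_2$ with $q_2=cy\in\mathcal{G}(I^{n+1})\cap(c)$ a product of $n+1$ edges forming a Type-I or Type-II minimum edge cover, and then count the ``bad'' generators by partitioning the edge-multisets according to their exact support of size $n+1+k$, contributing $d_{n+1+k}\multiset{n+1+k}{s-2(n+1)-k}$. Your explicit statement of the iff-characterization via $\mathrm{supp}(M_g)$ and the appeal to the uniqueness in Lemma \ref{unique2} only makes precise what the paper's counting argument does implicitly.
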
      
\begin{proof}	
Let $ \nu_{s-(n+1)} $ be the set   of elements of $ \mathcal{G}(I^{s-(n+1)}) $ which are  multiple of some element of   $ \mathcal{G}(I^{s-2(n+1)}(c))$  for $ s \geq 2n+2.$ First we want to  compute $|\nu_{s-(n+1)}| $  for $ s \geq 2n+2.$
\vspace*{0.1cm}\\
Let $q$ be an element of $ \mathcal{G}(I^{s-(n+1)}) $ which is  multiple of some element of   $ \mathcal{G}(I^{s-2(n+1)}(c)) $, i.e.,
 $q \in \mathcal{G}(I^{s-(n+1)}) \cap  I^{s-2(n+1)}(c) .$ By Lemma \ref{cont.1}, $q \in I^{s-2(n+1)} (I^{n+1}\cap (c)).$   
 Then $q = q_1 q_2$ for some $q_1 \in \mathcal{G}(I^{s-2(n+1)})$ and $q_2 \in  \mathcal{G}(I^{n+1}) \cap (c).$ Note that   $q_2 $ is  an element of   $\mathcal{G}(I^{n+1})$ and a multiple of $c.$ Since the product of any $n$ minimal generators of $I$ can not be a multiple of $c$  and   $\gamma^\prime(C_{2n+1}) = \gamma^{\prime\prime}(C_{2n+1}) =  n+1,$        
$q_2$ is the product of $n+1$ minimal generators of $I$ corresponding to some Type-I or Type-II minimum edge cover for $C_{2n+1}$ in $G.$ We can say  $q$ is the product of $s-(n+1)$ minimal generators of $I$ among which  $n+1$ minimal generators  are always corresponding to some Type-I or Type-II minimum edge cover for $C_{2n+1}$ in $G.$
\vspace*{0.1cm}\\
Fix $2n+2 \leq s \leq 3n+2+l.$ Then $n+1 \leq s-(n+1) \leq 2n+1+l.$  We want to choose $s-(n+1)$ minimal generators of $I$
containing $n+1$ minimal generators  corresponding to some Type-I or Type-II minimum edge cover for $C_{2n+1}$ in $G$
with repetition. 
While choosing  such $s-(n+1)$ minimal generators of $I$,  we can use only the distinct minimal generators of $I$ corresponding to some  edge set of size $n+1+k$   containing a Type-I or Type-II minimum edge cover of $C_{2n+1}$ in $G$ for each $k$ where $0 \leq k \leq s-2(n+1).$  So  $q$ is the product of  $s-(n+1)$ minimal generators of $I$ among which $n+1+k$ minimal generators  are corresponding to some edge set of size $n+1+k$ containing a Type-I or Type-II minimum edge cover for $C_{2n+1}$ in $G$ and the remaining from those $n+1+k$ minimal generators of $I$ with repetition  where $0 \leq k  \leq s-2(n+1).$ Thus
\vspace*{0.2cm}\\ 
 $\displaystyle{ |\nu_{n+1}| = |\nu_{(2n+2)-(n+1)}| =       d_{n+1}\multiset{n+1}{0}       }    
, $ \\

$\displaystyle{|\nu_{n+2}| = |\nu_{(2n+3)-(n+1)}| =      }\displaystyle{     d_{n+1}\multiset{n+1}{1}+d_{n+2}}  \displaystyle{\multiset{n+2}{0}           }   
, $  
\vspace*{0.2cm}\\
 \hspace*{0.5cm}\vdots\hspace*{2cm}\vdots \hspace*{2.5cm} \vdots 
\vspace*{0.2cm}\\
$\displaystyle{ |\nu_{2n+1+l}| = |\nu_{(3n+2+l)-(n+1)}| =          d_{n+1}\multiset{n+1}{n+l}+d_{n+2}}\displaystyle{\multiset{n+2}{n-1+l}}\displaystyle{+\cdots+ d_{2n+1+l} \multiset{2n+1+l}{0}        }.$
\vspace*{0.4cm}\\
Therefore by Remark \ref{unique3} for $2n+2 \leq s \leq 3n+2+l,$ we have  
$$\displaystyle{ |\mathscr{G}_{s-(n+1)}| =   \multiset{2n+1+l}{ s-(n+1)}  -   |\nu_{s-(n+1)}|      }  
.$$

Fix $ s \geq 3n+3+l.$ Then $ s-(n+1) \geq 2n+2+l.$ We want to choose $s-(n+1)$ minimal generators of $I$
containing $n+1$ minimal generators  corresponding to some Type-I or Type-II minimum edge cover for $C_{2n+1}$ in $G$
with repetition. 
While choosing  such $s-(n+1)$ minimal generators of $I$,
we can use only the distinct minimal generators of $I$ corresponding to some edge set of size $n+1+k$   containing a Type-I or Type-II minimum edge cover of $C_{2n+1}$ in $G$ for each $k$ where $0 \leq k \leq n+l.$  So  $q$ is the product of  $s-(n+1)$ minimal generators of $I$ among which $n+1+k$ minimal generators  are      corresponding to some edge set of size $n+1+k$ containing a Type-I or Type-II minimum edge cover for $C_{2n+1}$ in $G$ and the remaining from those $n+1+k$ minimal generators of $I$ with repetition  where $0 \leq k \leq n+l.$
\vspace*{0.2cm}\\
Thus for $ s \geq 3n+3+l ,$   
$\displaystyle{ |\nu_{s-(n+1)}| =       d_{n+1}\multiset{n+1}{s-2(n+1)}+d_{n+2}\multiset{n+2}{s-2(n+1)-1}}
\vspace*{0.2cm}\\
\hspace*{5.8cm}\displaystyle{+\cdots+  d_{2n+1+l} \multiset{2n+1+l}{s-2(n+1)-n-l}        }    
.$
\vspace*{0.2cm}\\
Therefore by Remark \ref{unique3} for $ s \geq 3n+3+l ,$ we have   
$$\displaystyle{ |\mathscr{G}_{s-(n+1)}| =   \multiset{2n+1+l}{ s-(n+1)}  -   |\nu_{s-(n+1)}|      }    
.$$
Hence for $ s \geq 2n+2,$
\begin{align*}                 
 |\mathscr{G}_{s-(n+1)}| &=\displaystyle{    \multiset{2n+1+l}{s-(n+1)}  - |\nu_{s-(n+1)}|  }\\
  &= \displaystyle{    \multiset{2n+1+l}{s-(n+1)}  - \bigg[    d_{n+1}\multiset{n+1}{s-2(n+1)}+d_{n+2}\multiset{n+2}{s-2(n+1)-1} +\cdots + }\\  
&\hspace*{0.5cm}\displaystyle{   d_{2n+1+l} \multiset{2n+1+l}{s-2(n+1)-n-l}        \bigg]}.
\end{align*}
\end{proof}  

  Let $ |\mathscr{G}_{s-(n+1)}|  $ is same as computed in Lemma \ref{uc2}   for $ s \geq 2n+2.$  For $ n + 1 \leq   s \leq 2n + 1 ,$ define $\displaystyle{ |\mathscr{G}_{s-(n+1)}| =  \multiset{2n+1+l}{s-(n+1)}   }$. In the next theorem, we compute all the symbolic defects of the edge ideal of $G.$    	
\begin{theorem}\label{sd.unicyclic}
	Let $ I = I(G) $ be   the edge ideal of $G$. Let $ s \in   \mathbb{N}  $ and write $ s = k(n + 1) + r  $ for some $ k \in \mathbb{Z}  $ and $ 0 \leq  r    \leq  n. $  Let $ d_{n+1},d_{n+2},\ldots,d_{2n+1+l} $ are same as defined in Remark \ref{ucc3}.   Then
	\begin{enumerate}
		\item 	for $ n + 1 \leq   s \leq 2n + 1   ,$
		 $$\displaystyle{ \sdefect({{ I}}, s) =|\mathscr{G}_{s-(n+1)}|  =    \multiset{2n+1+l}{s-(n+1)} },$$
 	\item for   $ s \geq 2n+2, $ 	$$\displaystyle{ \sdefect({{ I}}, s) =    |\mathscr{G}_{s-(n+1)}|      }    
 	+   \sdefect({{ I}}, s-(n+1))  .$$    
	\end{enumerate}

\end{theorem}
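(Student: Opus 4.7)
The strategy is to reduce $\sdefect(I,s)$ to a count of minimal generators of the smaller symbolic power $I^{(s-(n+1))}$. The first step is to rewrite Lemma~\ref{unicyclic} recursively as
\[
I^{(s)} \;=\; I^{s} \;+\; c\,I^{(s-(n+1))},
\]
where $c = x_1\cdots x_{2n+1}$; this follows by factoring $c$ out of each $t \geq 1$ term in Lemma~\ref{unicyclic} and reindexing, recognizing the inner sum as $I^{(s-(n+1))}$ via Lemma~\ref{unicyclic} applied to $s-(n+1) = (k-1)(n+1)+r$. From this identity I would then deduce
\[
\sdefect(I,s) \;=\; |\mathcal{G}(I^{(s-(n+1))})|.
\]
The crucial ingredient is the degree bound $\deg(cg') = (2n+1) + \deg(g') \leq 2s-1$ for any $g' \in \mathcal{G}(I^{(s-(n+1))})$, which keeps every element of $c\,\mathcal{G}(I^{(s-(n+1))})$ outside $I^s$. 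Since multiplication by $c$ gives a bijection $\mathcal{G}(I^{(s-(n+1))}) \to \mathcal{G}(c\,I^{(s-(n+1))})$, and since all degree-$2s$ minimal generators of $I^{(s)}$ coming from $\mathcal{G}(I^{s})$ lie in $I^{s}$, one has $\mathcal{G}(I^{(s)}) \setminus I^{s} = c\,\mathcal{G}(I^{(s-(n+1))})$ and hence the displayed equality via $\mu(I^{(s)}/I^s) = |\mathcal{G}(I^{(s)}) \setminus I^s|$.

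Part~(1) then falls out: for $n+1 \leq s \leq 2n+1$ we have $s-(n+1) \in [0,n]$, so Lemma~\ref{unicyclic} (with $k=0$) gives $I^{(s-(n+1))} = I^{s-(n+1)}$, and Remark~\ref{unique3} evaluates $|\mathcal{G}(I^{s-(n+1)})| = \multiset{2n+1+l}{s-(n+1)}$, matching the stipulated value of $|\mathscr{G}_{s-(n+1)}|$ on that range.

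For part~(2) with $s \geq 2n+2$, I would decompose
\[
\mathcal{G}(I^{(s-(n+1))}) \;=\; \mathcal{G}(D(s-(n+1))) \;\sqcup\; \mathscr{G}_{s-(n+1)}.
\]
Disjointness is a degree check: a minimal generator of $D(s-(n+1))$ sitting in $I^{s-(n+1)-t(n+1)}(c)^{t}$ with $t \geq 1$ has degree $2(s-(n+1)) - t < 2(s-(n+1))$, whereas $\mathscr{G}_{s-(n+1)} \subseteq \mathcal{G}(I^{s-(n+1)})$ consists of elements of degree exactly $2(s-(n+1))$. The content of the decomposition is that a minimal generator $g$ of $I^{s-(n+1)}$ survives as a minimal generator of $I^{(s-(n+1))}$ precisely when $g \notin I^{s-2(n+1)}(c)$: failure of minimality means $g$ is a multiple of some element of $D(s-(n+1))$, i.e.\ $g$ lies in $I^{s-(n+1)-t(n+1)}(c)^{t}$ for some $t \geq 1$. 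Lemma~\ref{cont.1} handles the $t=1$ case by identifying $I^{s-(n+1)} \cap I^{s-2(n+1)}(c)$ with $I^{s-2(n+1)}(I^{n+1}\cap(c))$, while Lemma~\ref{uc1} collapses the $t \geq 2$ terms into the $t=1$ term. Therefore $|\mathcal{G}(I^{(s-(n+1))})| = \sdefect(I, s-(n+1)) + |\mathscr{G}_{s-(n+1)}|$, and combining with the identity from Step~1 gives (2).

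The main technical hurdle is the equivalence between ``$g \in \mathcal{G}(I^{s-(n+1)})$ fails to be a minimal generator of $I^{(s-(n+1))}$'' and the concrete condition ``$g \in I^{s-2(n+1)}(c)$''; this is precisely where the full strength of Lemmas~\ref{cont.1} and~\ref{uc1} is used to reduce an arbitrary tower of $c$-power terms to the single ideal $I^{s-2(n+1)}(c)$. The remaining ingredients --- the recursive identity for $I^{(s)}$, the degree bookkeeping, and the extraction of $c$ as a bijection on monomial generating sets --- are routine once those lemmas are in hand.
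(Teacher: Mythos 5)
Your proposal is correct and follows essentially the same route as the paper: both factor $c$ out of the sum in Lemma \ref{unicyclic} to get $I^{(s)} = I^s + c\,I^{(s-(n+1))}$, use the degree bound $\deg(cg') \le 2s-1$ to keep these generators out of $I^s$, and then split $\mathcal{G}(I^{(s-(n+1))})$ into $\mathcal{G}(\sdefectideal(I^{(s-(n+1))}))$ and the survivors $\mathscr{G}_{s-(n+1)}$ via Lemmas \ref{cont.1} and \ref{uc1}. Your explicit justification that $\mu(I^{(s)}/I^s) = |\mathcal{G}(I^{(s)})\setminus I^s|$ and that multiplication by $c$ is a bijection on minimal monomial generating sets makes precise a step the paper treats implicitly, but the argument is the same.
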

\begin{proof}    
 By Lemma \ref{unicyclic} for $ s = k(n + 1) + r  $ where $ k \in \mathbb{Z}  $ and $ 0 \leq  r    \leq  n, $      we have  $${I}^{(s)} =   \displaystyle{ \sum_{t=0}^{k}  {I}^{s-t(n+1)}  (c)^t  }~ \mbox{where}~ c= x_1 \cdots x_{2n+1} .$$   
Fix	 $ n + 1 \leq   s \leq 2n + 1 .$ Then  
$$ {I}^{(s)} = I^{s}  +   \sdefectideal (I^{(s)}) = \displaystyle{ I^{s}  +   {I}^{s-(n+1)}  (c)  }.$$

Hence by Remark \ref{unique3} for $ n + 1 \leq   s \leq 2n + 1 ,  $ we have 
$$\sdefect({{ I}}, s) 
 = |\mathcal{G}(\sdefectideal (I^{(s)}) )|=  |\mathcal{G}({I}^{s-(n+1)}  (c)) |
  =   |\mathcal{G}({I}^{s-(n+1)}  ) |
 =\displaystyle{  \multiset{2n+1+l}{s-(n+1)} }.$$
  Fix   $  s \geq 2n + 2 $ and write $ s = k(n + 1) + r  $ for some $ k \in \mathbb{Z}  $ and
$ 0 \leq  r    \leq  n. $ Then
\begin{align}
  {I}^{(s)}
  &= \displaystyle{ I^{s}  +    \sdefectideal (I^{(s)})  }\label{4}\\
  &=   \displaystyle{ I^s +  \sum_{t=1}^{k}  {I}^{s-t(n+1)}  (c)^t  }\nonumber\\
  &=   \displaystyle{ I^s +  (c)\sum_{t=0}^{k-1}  {I}^{s-(n+1)-t(n+1)}  (c)^t  }\nonumber   \\ 
&=  \displaystyle{ I^s +  (c) [ {I}^{s-(n+1)} + {I}^{s-  2(n+1)}(c) + {I}^{s-  3(n+1)}(c)^2 + \cdots  + {I}^{s-  k(n+1)}(c)^{k-1} ]  }\label{5}\\
  &= \displaystyle{ I^s +  (c) [ {I}^{s-(n+1)} +  \sum_{t=1}^{k-1}  {I}^{s-(n+1)-t(n+1)}   (c)^t]  }\label{6}\\
  &= \displaystyle{ I^{s}  + (c) [ {I}^{s-(n+1)}   + \sdefectideal({I}^{({s-(n+1)})}) ]  }\label{7}.     	
\end{align}        
Thus by (\ref{4}), (\ref{5}), (\ref{6}) and (\ref{7}) for $ s \geq 2n + 2, $
\begin{align*}  
\sdefect(I,s)    & = |\mathcal{G}(\sdefectideal (I^{(s)}))|\\ 
&=  |\mathcal{G}((c) [ {I}^{s-(n+1)}   + \sdefectideal({I}^{({s-(n+1)})}) ])|\\
&=  |\mathcal{G}(  {I}^{s-(n+1)}   + \sdefectideal({I}^{({s-(n+1)})}) )|\\
&=  |\mathcal{G}(  {I}^{s-(n+1)}   + \displaystyle{\sum_{t=1}^{k-1}  {I}^{s-(n+1)-t(n+1)}   (c)^t} )|,
\end{align*}

i.e.,  $ \sdefect(I,s) $  is the  number of elements of $  \mathcal{G}(I^{s-(n+1)} ) $ which are not multiple of some element
  of  $\mathcal{G}(\displaystyle{\sum_{t=1}^{k-1}  {I}^{s-(n+1)-t(n+1)}   (c)^t} ) +  |\mathcal{G}(\displaystyle{\sum_{t=1}^{k-1}  {I}^{s-(n+1)-t(n+1)}   (c)^t} )|. $    By  Lemma \ref{uc1}, any element of $\mathcal{G}({I}^{s-(n+1)}) $ which   lies in the ideal $  \displaystyle{\sum_{t=1}^{k-1}  {I}^{s-(n+1)-t(n+1)}   (c)^t}$ must be a multiple of some element of $\mathcal{G}({I}^{s-2(n+1)}(c)) .$  This implies that  $ \sdefect(I,s) $ is  the number of elements of  $ \mathcal{G}(I^{s-(n+1)} ) $  which are not multiple of some element   of  $ \mathcal{G}(I^{s-2(n+1)}(c)) +  |\mathcal{G}(\displaystyle{\sum_{t=1}^{k-1}  {I}^{s-(n+1)-t(n+1)}   (c)^t} )|  
= |\mathscr{G}_{s-(n+1)}|  +  \sdefect(I,s-(n+1)). $
\end{proof}  	
 Next, we give  the exact values of symbolic defects of the edge ideal of $G$  in terms of  $|\mathscr{G}_{j}|$'s in the following corollary.   
\begin{corollary}\label{sd.unicyclic1}
	Let $ I = I(G) $ be   the edge ideal of $G$. Let $ s \in   \mathbb{N}  $ and write $ s = k(n + 1) + r  $ for some $ k \in \mathbb{Z}  $ and $ 0 \leq  r    \leq  n. $ 	Then
	
\begin{equation*}  
\sdefect({{ I}}, s)= 
\begin{cases}
&|\mathscr{G}_{(k-1)(n+1)}|+|\mathscr{G}_{(k-2)(n+1)}|+\cdots+|\mathscr{G}_{(n+1)}|   +  |\mathscr{G}_{0}| 
\vspace*{0.2cm}~\mbox{if}~  s = k(n + 1) \vspace*{0.2cm}\\
&|\mathscr{G}_{(k-1)(n+1)+1}|+|\mathscr{G}_{(k-2)(n+1)+1}|+\cdots+|\mathscr{G}_{(n+1)+1}|   +   |\mathscr{G}_{1}| 
\vspace*{0.2cm}\\ &\hspace*{7.8cm}~\mbox{if}~  s = k(n + 1) + 1\vspace*{0.2cm}\\
&  \hspace*{1cm}   \vdots\hspace*{3cm}  \vdots\hspace*{3cm}  \vdots\vspace*{0.2cm}\\
&|\mathscr{G}_{(k-1)(n+1)+n}|+|\mathscr{G}_{(k-2)(n+1)+n}|+\cdots+|\mathscr{G}_{(n+1)+n}|   +   |\mathscr{G}_{n}|
\vspace*{0.2cm}\\ 
&\hspace*{7.8cm}~\mbox{if}~  s = k(n + 1) + n       
\end{cases}.  
\end{equation*}	
	
\end{corollary}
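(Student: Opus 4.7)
The plan is to prove this corollary by a straightforward induction on $k$, unrolling the recurrence supplied by Theorem \ref{sd.unicyclic}. Fix $r$ with $0 \leq r \leq n$ and view $\sdefect(I, k(n+1)+r)$ as a function of $k \geq 1$. The goal is to show that iterating the recurrence of Theorem \ref{sd.unicyclic}(2) exactly $k-1$ times, and then terminating with the base case in Theorem \ref{sd.unicyclic}(1), produces the telescoping sum displayed in the statement.

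First I would handle the base case $k=1$. Here $s = n+1+r$ satisfies $n+1 \leq s \leq 2n+1$, so Theorem \ref{sd.unicyclic}(1) applies and gives $\sdefect(I,s) = |\mathscr{G}_r| = |\mathscr{G}_{0\cdot(n+1)+r}|$, which is the single-term version of the claimed formula. Next, for $k \geq 2$ we have $s = k(n+1)+r \geq 2(n+1) \geq 2n+2$, so Theorem \ref{sd.unicyclic}(2) applies and yields
\begin{equation*}
\sdefect(I,s) \;=\; |\mathscr{G}_{s-(n+1)}| + \sdefect(I, s-(n+1)) \;=\; |\mathscr{G}_{(k-1)(n+1)+r}| + \sdefect(I, (k-1)(n+1)+r).
\end{equation*}
By the inductive hypothesis applied to $(k-1)(n+1)+r$,
\begin{equation*}
\sdefect(I, (k-1)(n+1)+r) \;=\; |\mathscr{G}_{(k-2)(n+1)+r}| + |\mathscr{G}_{(k-3)(n+1)+r}| + \cdots + |\mathscr{G}_{(n+1)+r}| + |\mathscr{G}_r|.
\end{equation*}
Substituting this into the previous display gives the desired telescoping expression.

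Finally I would run through the cases $r = 0, 1, \ldots, n$ exactly as listed in the statement, which is simply a matter of reading the same telescoping identity with different values of $r$; no new content is needed beyond the single induction above. The argument carries no real obstacle: the only thing to verify carefully is that at each step of the iteration the index $(k-t)(n+1)+r$ still lies in the regime $s \geq 2n+2$ where the recurrence is valid, which holds for $t \leq k-2$, and that the very last invocation (at $t = k-1$, i.e.\ $s = n+1+r$) falls into the base range $n+1 \leq s \leq 2n+1$ where Theorem \ref{sd.unicyclic}(1) supplies the terminal term $|\mathscr{G}_r|$. Thus the entire proof is essentially a mechanical unwinding of Theorem \ref{sd.unicyclic}.
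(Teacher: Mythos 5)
Your proposal is correct and follows essentially the same route as the paper: both unwind the recurrence $\sdefect(I,s)=|\mathscr{G}_{s-(n+1)}|+\sdefect(I,s-(n+1))$ from Theorem \ref{sd.unicyclic} down to the base range $n+1\leq s\leq 2n+1$, where part (1) supplies the terminal term $|\mathscr{G}_r|$. Your version merely phrases the iteration as a formal induction on $k$ and is, if anything, slightly more careful about checking that each intermediate index stays in the regime where the recurrence applies.
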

\begin{proof}
By Theorem \ref{sd.unicyclic}	
for   $ s \geq n+1, $ we have 	$$\displaystyle{ \sdefect({{ I}}, s) =    |\mathscr{G}_{s-(n+1)}|      }    
	+   \sdefect({{ I}}, s-(n+1))  .$$
	If $ s = k(n+1), $ then 
	\begin{align*}
 \sdefect({{ I}}, s) &=    |\mathscr{G}_{(k-1)(n+1)}|          
+   \sdefect({{ I}}, (k-1)(n+1))\\
&=    |\mathscr{G}_{(k-1)(n+1)}|          
+   |\mathscr{G}_{(k-2)(n+1)}|          
+   \sdefect({{ I}}, (k-2)(n+1))\\
&  \hspace*{1cm}   \vdots\hspace*{3cm}  \vdots\hspace*{3cm}  \vdots\vspace*{0.2cm}\\\
&=|\mathscr{G}_{(k-1)(n+1)}|+|\mathscr{G}_{(k-2)(n+1)}|+\cdots+|\mathscr{G}_{(n+1)}|   +   \sdefect({{ I}}, (n+1))\\
&=|\mathscr{G}_{(k-1)(n+1)}|+|\mathscr{G}_{(k-2)(n+1)}|+\cdots+|\mathscr{G}_{(n+1)}|   +  |\mathscr{G}_{0}|. 
	\end{align*}
	Similarly if   $ s = k(n+1)+1, $ then 
	$$\sdefect({{ I}}, s) =|\mathscr{G}_{(k-1)(n+1)+1}|+|\mathscr{G}_{(k-2)(n+1)+1}|+\cdots+|\mathscr{G}_{(n+1)+1}|   +   |\mathscr{G}_{1}| .$$
	 \hspace*{3cm}   \vdots\hspace*{3cm}  \vdots\hspace*{3cm}  \vdots\vspace*{0.2cm}\\
 If   $ s = k(n+1)+n, $ then 
$$\sdefect({{ I}}, s) =|\mathscr{G}_{(k-1)(n+1)+n}|+|\mathscr{G}_{(k-2)(n+1)+n}|+\cdots+|\mathscr{G}_{(n+1)+n}|   +   |\mathscr{G}_{n}| .$$	 
\end{proof}

 In the following remark, we give one procedure to find the quasi-polynomial associated with the symbolic defects of edge ideal of $G$.

\begin{remark}\label{sd.unicyclic.2}
	Let $ I = I(G) $ be   the edge ideal of $G$. Let $ s \in   \mathbb{N}  $ and write $ s = k(n + 1) + r  $ for some $ k \in \mathbb{Z}  $ and $ 0 \leq  r    \leq  n. $   By Lemma  \ref{quasi}, the $\sdefect({{ I}}, s)$
	is a quasi-polynomial of quasi-period $n+1$.
\begin{align*}
\mbox{Let}~ f_0(k) &=|\mathscr{G}_{(k-1)(n+1)}|+|\mathscr{G}_{(k-2)(n+1)}|+\cdots+|\mathscr{G}_{(n+1)}|   +  |\mathscr{G}_{0}| 
\vspace*{0.2cm}~\mbox{if}~  s = k(n + 1) ,\vspace*{0.2cm}\\
f_1(k)&=|\mathscr{G}_{(k-1)(n+1)+1}|+|\mathscr{G}_{(k-2)(n+1)+1}|+\cdots+|\mathscr{G}_{(n+1)+1}|   +   |\mathscr{G}_{1}| 
~\mbox{if}~  s = k(n + 1) + 1,\vspace*{0.2cm}\\
&  \hspace*{1cm}   \vdots\hspace*{3cm}  \vdots\hspace*{3cm}  \vdots\vspace*{0.2cm}\\
f_{n}(k)&=|\mathscr{G}_{(k-1)(n+1)+n}|+|\mathscr{G}_{(k-2)(n+1)+n}|+\cdots+|\mathscr{G}_{(n+1)+n}|   +   |\mathscr{G}_{n}|
~\mbox{if}~  s = k(n + 1) + n. 
\end{align*}                  
Then by Corollary \ref{sd.unicyclic1}, we can say

\begin{equation*}  
\sdefect({{ I}}, s)= 
\begin{cases}
&f_0(k) 
\vspace*{0.2cm}~\mbox{if}~  s = k(n + 1) \\
&f_1(k)   ~\mbox{if}~  s = k(n + 1) + 1\vspace*{0.2cm}\\
&  \hspace*{0.5cm}   \vdots\hspace*{2cm}  \vdots\vspace*{0.2cm}\\
&f_{n}(k)  ~\mbox{if}~  s = k(n + 1) + n       
\end{cases}
\end{equation*}	
 grows as a  quasi-polynomial in $ s $ with quasi-period $n+1.
 $ 
\end{remark}    
We have seen that it is very difficult to compute explicitly the quasi-polynomial associated with the symbolic defects  of  edge ideal of any unicyclic graph on $n$ vertices where $n$ is large. We  compute explicitly the quasi-polynomial associated with the symbolic defects  of  edge ideal of an odd cycle of length $5$ in the following example.
           
\begin{example}    
Consider $G$ to be the odd cycle $C_5=(x_1,\ldots,x_5).$ Let $I = I(C_5)$ be the edge ideal of $C_5$. 
	Let $ d_{3},d_{4},d_{5} $ be the number of   edge sets of  size $ 3,4,5,$ respectively such that each edge set contains a Type-I or Type-II minimum   edge cover for $C_{5}$. Note that there is no  Type-II  minimum   edge cover for $C_{5}$. Then by Lemma \ref{type1}, we have  $ d_{3}=5,$ $d_{4}=5$ and $d_{5} = 1.$ Let $ |\mathscr{G}_{s-3}|  $ is same as defined earlier   for $ s \geq 3.$ So
	\vspace*{0.1cm}\\ $\displaystyle{ |\mathscr{G}_{0}| =    \multiset{5}{0} },$  $\displaystyle{ |\mathscr{G}_{1}| =    \multiset{5}{1} ,}$  $\displaystyle{ |\mathscr{G}_{2}| =    \multiset{5}{2} }$ and for $ s \geq 6,$ 
\vspace*{0.2cm}\\
$\displaystyle{ |\mathscr{G}_{s-3}| =     \multiset{5}{s-3}  - \bigg[    5\multiset{3}{(s-3)-3}+5\multiset{4}{(s-3)-4} +}\displaystyle{   1 \multiset{5}{(s-3)-5}        \bigg]}    $.
\vspace*{0.1cm}\\
	Fix $ s = 3k.$ Then by Corollary \ref{sd.unicyclic1}, we have
		$$ \sdefect(I,s)  = |\mathscr{G}_{3(k-1)}|+|\mathscr{G}_{3(k-2)}|+\cdots+|\mathscr{G}_{3(2)}| +|\mathscr{G}_{3(1)}|   +  |\mathscr{G}_{0}| .$$	
	Here  $\displaystyle{|\mathscr{G}_{0}| = \binom{4}{0}=1}$ and   $\displaystyle{ |\mathscr{G}_{3}| =     \multiset{5}{3}  -    5\multiset{3}{0}   =   \binom{7}{3}  - 5\binom{2}{0}=30  }$.
\vspace*{0.1cm}\\  
Now we assume that $s-3 \geq 6.$ Then  
\begin{align*}       
|\mathscr{G}_{3(k-1)}| &= \displaystyle{  \multiset{5}{3(k-1)}}   - \bigg[    5\multiset{3}{3(k-1)-3}+5\multiset{4}{3(k-1)-4} +  1 \multiset{5}{3(k-1)-5}        \bigg]\\
&=  \binom{3k+1}{3k-3}   - \bigg[    5 \binom{3k-4}{3k-6}+5 \binom{3k-4}{3k-7} +  1  \binom{3k-4}{3k-8}       \bigg]\\
&=\frac{45}{2}k^2 - \frac{75}{2}k + 15.        
\end{align*} 
Similarly, $\displaystyle{|\mathscr{G}_{3(k-i)}|=\frac{45}{2}(k-i+1)^2 - \frac{75}{2}(k-i+1) + 15  }$ for $2 \leq i \leq k-2$. Hence   
\begin{align*}    
\sdefect(I,s)  &= |\mathscr{G}_{3(k-1)}|+|\mathscr{G}_{3(k-2)}|+\cdots+|\mathscr{G}_{3(2)}| +|\mathscr{G}_{3(1)}|   +  |\mathscr{G}_{0}|\\
&=\big[|\mathscr{G}_{3(k-1)}|+|\mathscr{G}_{3(k-2)}|+\cdots+|\mathscr{G}_{3(2)}|\big] +|\mathscr{G}_{3(1)}|   +  |\mathscr{G}_{0}|\\
&=\bigg[\frac{45}{2}\big[k^2+(k-1)^2+\cdots+3^2\big] - \frac{75}{2}\big[k+(k-1)+\cdots+3\big] + 15(k-2)\bigg] + 30 + 1\\
&=\bigg[\frac{45}{2}\big[k^2+(k-1)^2+\cdots+1^2\big] - \frac{75}{2}\big[k+(k-1)+\cdots+1\big] + 15k\bigg]\\
&\hspace*{0.4cm}- \frac{45}{2}\big[2^2+1^2\big] + \frac{75}{2}[2+1] - 15(2) + 30 + 1\\
&=\bigg[\frac{45}{2}\frac{k(k+1)(2k+1)}{6} - \frac{75}{2}\frac{k(k+1)}{2} + 15k\bigg] + 1\\
&=\frac{15}{2}k^3 - \frac{15}{2}k^2 + 1
\end{align*}    
 
Fix $ s = 3k+1.$  Then by Corollary \ref{sd.unicyclic1}, we have	
$$ \sdefect(I,s)  = |\mathscr{G}_{3(k-1)+1}|+|\mathscr{G}_{3(k-2)+1}|+\cdots+|\mathscr{G}_{3(2)+1}| +|\mathscr{G}_{3(1)+1}|   +  |\mathscr{G}_{1}| .$$    

Here  $\displaystyle{|\mathscr{G}_{1}| = \binom{5}{1}=5}$ and   $\displaystyle{ |\mathscr{G}_{4}| =     \multiset{5}{4}  -    5\multiset{3}{1}   -  5\multiset{4}{0}  =   \binom{8}{4}  - 5\binom{3}{1} - 5\binom{3}{0}  =50  }$.
\vspace*{0.1cm}\\                
Now we assume that  $s-3 \geq 7.$ Then  
\begin{align*}       
|\mathscr{G}_{3(k-1)+1}| &= \displaystyle{  \multiset{5}{3(k-1)+1}}   - \bigg[    5\multiset{3}{3(k-1)-2}+5\multiset{4}{3(k-1)-3} +  1 \multiset{5}{3(k-1)-4}        \bigg]\\
&=  \binom{3k+2}{3k-2}   - \bigg[    5 \binom{3k-3}{3k-5}+5 \binom{3k-3}{3k-6} +  1  \binom{3k-3}{3k-7}       \bigg]\\
&=\frac{45}{2}k^2 - \frac{45}{2}k +  5.        
\end{align*} 
Similarly, $\displaystyle{|\mathscr{G}_{3(k-i)+1}|=\frac{45}{2}(k-i+1)^2 - \frac{45}{2}(k-i+1) + 5  }$ for $2 \leq i \leq k-2$. Hence   
\begin{align*}    
\sdefect(I,s)  &= |\mathscr{G}_{3(k-1)+  1}|+|\mathscr{G}_{3(k-2)+  1}|+\cdots+|\mathscr{G}_{3(2)+  1}| +|\mathscr{G}_{3(1)+  1}|   +  |\mathscr{G}_{1}|\\
&=\big[|\mathscr{G}_{3(k-1)+  1}|+|\mathscr{G}_{3(k-2)+  1}|+\cdots+|\mathscr{G}_{3(2)+  1}|\big] +|\mathscr{G}_{3(1)+  1}|   +  |\mathscr{G}_{1}|\\
&=\bigg[\frac{45}{2}\big[k^2+(k-1)^2+\cdots+3^2\big] - \frac{45}{2}\big[k+(k-1)+\cdots+3\big] + 5(k-2)\bigg] + 50 + 5\\
&=\bigg[\frac{45}{2}\big[k^2+(k-1)^2+\cdots+1^2\big] - \frac{45}{2}\big[k+(k-1)+\cdots+1\big] + 5k\bigg]\\
&\hspace*{0.4cm}- \frac{45}{2}\big[2^2+1^2\big] + \frac{45}{2}[2+1] -   5(2) + 55\\
&=\frac{45}{2}\frac{k(k+1)(2k+1)}{6} - \frac{45}{2}\frac{k(k+1)}{2} + 5k\\
&=\frac{15}{2}k^3 - \frac{5}{2}k.  
\end{align*}

Fix $ s = 3k+2.$  Then by Corollary   \ref{sd.unicyclic1}, we have
    	$$ \sdefect(I,s)  = |\mathscr{G}_{3(k-1)+2}|+|\mathscr{G}_{3(k-2)+2}|+\cdots+|\mathscr{G}_{3(2)+2}| +|\mathscr{G}_{3(1)+2}|   +  |\mathscr{G}_{2}| .$$    
Here  $\displaystyle{|\mathscr{G}_{2}| = \binom{6}{2}=15}$.     
Now we assume that $s-3 \geq 5.$  Then  
\begin{align*}           
|\mathscr{G}_{3(k-1)+2}| &= \displaystyle{  \multiset{5}{3(k-1)+2}}   - \bigg[    5\multiset{3}{3(k-1)-1}+5\multiset{4}{3(k-1)-2} +  1 \multiset{5}{3(k-1)-3}        \bigg]\\
&=  \binom{3k+3}{3k-1}   - \bigg[    5 \binom{3k-2}{3k-4}+5 \binom{3k-2}{3k-5} +  1  \binom{3k-2}{3k-6}       \bigg]\\
&=\frac{45}{2}k^2 - \frac{15}{2}k.        
\end{align*} 
Similarly, $\displaystyle{|\mathscr{G}_{3(k-i)+2}|=\frac{45}{2}(k-i+1)^2 - \frac{15}{2}(k-i+1)   }$ for $2 \leq i \leq k-1$. Hence  
\begin{align*}    
\sdefect(I,s)  &= |\mathscr{G}_{3(k-1)+  2}|+|\mathscr{G}_{3(k-2)+  2}|+\cdots+|\mathscr{G}_{3(2)+  2}| +|\mathscr{G}_{3(1)+  2}|   +  |\mathscr{G}_{2}|\\
&=\big[|\mathscr{G}_{3(k-1)+  2}|+|\mathscr{G}_{3(k-2)+  2}|+\cdots+|\mathscr{G}_{3(2)+  2}| +|\mathscr{G}_{3(1)+  2}|\big]   +  |\mathscr{G}_{2}|\\
&=\bigg[\frac{45}{2}\big[k^2+(k-1)^2+\cdots+2^2\big] - \frac{15}{2}\big[k+(k-1)+\cdots+2\big] \bigg]    + 15\\
&=\bigg[\frac{45}{2}\big[k^2+(k-1)^2+\cdots+1^2\big] - \frac{15}{2}\big[k+(k-1)+\cdots+1\big]  \bigg]\\
&\hspace*{0.4cm}- \frac{45}{2}(1^2) + \frac{15}{2}(1)  + 15\\
&=\frac{45}{2}\frac{k(k+1)(2k+1)}{6} - \frac{15}{2}\frac{k(k+1)}{2}  \\
&=\frac{15}{2}k^3 + \frac{15}{2}k^2.  
\end{align*}
Thus by Remark \ref{sd.unicyclic.2}, we can say

\begin{equation*}  
\sdefect({{ I}}, s)= 
\begin{cases}
&\displaystyle{\frac{15}{2}k^3 - \frac{15}{2}k^2 + 1}
\vspace*{0.2cm}~\mbox{if}~  s = 3k \\
&\displaystyle{\frac{15}{2}k^3 - \frac{5}{2}k}   ~\mbox{if}~  s = 3k + 1\vspace*{0.2cm}\\
&\displaystyle{\frac{15}{2}k^3 + \frac{15}{2}k^2}  ~\mbox{if}~  s = 3k + 2       
\end{cases}
\end{equation*}	
grows as a  quasi-polynomial in $ s $ with quasi-period $3.$
\end{example}

Next we compute explicitly the quasi-polynomial associated with the symbolic defects  of  edge ideal of a small  unicyclic graph.

\begin{example}    
	Consider $G$ to be the  unicyclic graph  in Figure \ref{fig.2}. Let $I = I(G)$ be the edge ideal of $G$. 
	Let $ d_{2},d_{3},d_{4} $ be the number of   edge sets of  size $ 2,3,4,$ respectively such that each edge set contains a Type-I or Type-II minimum   edge cover for $C_{3}$.  
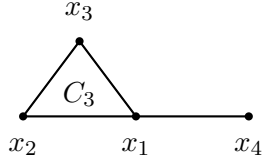
\begin{figure}[!ht]
	\begin{tikzpicture}[scale=1]
	\begin{scope}[ thick, every node/.style={sloped,allow upside down}] 
	\definecolor{ultramarine}{rgb}{0.07, 0.04, 0.56} 
	\definecolor{zaffre}{rgb}{0.0, 0.08, 0.66}
	
\draw[fill][-,thick] (0,0) --(1.5,0);  
	\draw[fill][-,thick](0,0) --(0.75,1);
\draw[fill][-,thick](0.75,1) --(1.5,0);
\draw[fill][-,thick]  (1.5,0) --(3,0);

	\draw[fill][-,thick] (0,0) circle [radius=0.04];
	\draw[fill][-,thick] (1.5,0) circle [radius=0.04];
	\draw[fill][-,thick] (0.75,1) circle [radius=0.04];
	\draw[fill][-,thick] (3,0) circle [radius=0.04];

	\node at (0,-0.4) {$x_2$};
	\node at (1.5,-0.4) {$x_1$};
	\node at (0.75,1.4) {$x_{3}$};  
	\node at (3,-0.4) {$x_{4}$};

	\node at (0.75,0.3) {$C_3$};	

	\end{scope}
	\end{tikzpicture}
	\caption{A unicyclic graph $G$ with an odd  cycle  $C_3$ of length $3$.}\label{fig.2}
\end{figure}  	
Then by Lemma \ref{type1}, we have  $ d_{2}=4,$ $d_{3}=4$ and $d_{4} = 1.$ Let $ |\mathscr{G}_{s-2}|  $ is same as defined earlier   for $ s \geq 2.$ So
 $\displaystyle{ |\mathscr{G}_{0}| =    \multiset{4}{0} },$  $\displaystyle{ |\mathscr{G}_{1}| =    \multiset{4}{1}  }$   and for $ s \geq 4,$ 
	\vspace*{0.2cm}\\
	$\displaystyle{ |\mathscr{G}_{s-2}| =     \multiset{4}{s-2}  - \bigg[    4\multiset{2}{(s-2)-2}+4\multiset{3}{(s-2)-3} +}\displaystyle{   1 \multiset{4}{(s-2)-4}        \bigg]}  $.
	\vspace*{0.1cm}\\
	Fix $ s = 2k.$ Then by Corollary \ref{sd.unicyclic1}, we have
	$$ \sdefect(I,s)  = |\mathscr{G}_{2(k-1)}|+|\mathscr{G}_{2(k-2)}|+\cdots+|\mathscr{G}_{2(2)}| +|\mathscr{G}_{2(1)}|   +  |\mathscr{G}_{0}| .$$	
	Here  $\displaystyle{|\mathscr{G}_{0}| = \binom{3}{0}=1}$ and   $\displaystyle{ |\mathscr{G}_{2}| =     \multiset{4}{2}  -    4\multiset{2}{0}   =   \binom{5}{2}  - 4\binom{1}{0}=6  }$.
	\vspace*{0.1cm}\\  
	Now we assume that $s-2 \geq 4.$ Then  
	\begin{align*}       
	|\mathscr{G}_{2(k-1)}| &= \displaystyle{  \multiset{4}{2(k-1)}}   - \bigg[    4\multiset{2}{2(k-1)-2}+4\multiset{3}{2(k-1)-3} +  1 \multiset{4}{2(k-1)-4}        \bigg]\\
	&=  \binom{2k+1}{2k-2}   - \bigg[    4 \binom{2k-3}{2k-4}+4 \binom{2k-3}{2k-5} +  1  \binom{2k-3}{2k-6}       \bigg]\\
	&=4k - 2.        
	\end{align*} 
	Similarly, $\displaystyle{|\mathscr{G}_{2(k-i)}|=4(k-i+1) - 2  }$ for $2 \leq i \leq k-2$. Hence   
	\begin{align*}    
	\sdefect(I,s)  &= |\mathscr{G}_{2(k-1)}|+|\mathscr{G}_{2(k-2)}|+\cdots+|\mathscr{G}_{2(2)}| +|\mathscr{G}_{2(1)}|   +  |\mathscr{G}_{0}|\\
	&=\big[|\mathscr{G}_{2(k-1)}|+|\mathscr{G}_{2(k-2)}|+\cdots+|\mathscr{G}_{2(2)}|\big] +|\mathscr{G}_{2(1)}|   +  |\mathscr{G}_{0}|\\
	&=\big[ 4\big[k+(k-1)+\cdots+3\big] - 2(k-2)\big] + 6 + 1\\
	&= 4\big[k+(k-1)+\cdots+1\big]  -   4[2+1]  - 2k  + 4   +  6 + 1\\
	&= 2k(k+1)   - 2k  - 1\\
	&=  {2}k^2 - 1
	\end{align*}    
	
	Fix $ s = 2k+1.$  Then by Corollary \ref{sd.unicyclic1}, we have	
	$$ \sdefect(I,s)  = |\mathscr{G}_{2(k-1)+1}|+|\mathscr{G}_{2(k-2)+1}|+\cdots+|\mathscr{G}_{2(2)+1}| +|\mathscr{G}_{2(1)+1}|   +  |\mathscr{G}_{1}| .$$    
	
	Here  $\displaystyle{|\mathscr{G}_{1}| = \binom{4}{1}=4}$ and   $\displaystyle{ |\mathscr{G}_{3}| =     \multiset{4}{3}  -    4\multiset{2}{1}   -  4\multiset{3}{0}  =   \binom{6}{3}  - 4\binom{2}{1} - 4\binom{2}{0}  =8  }$.
	\vspace*{0.1cm}\\                
	Now we assume that  $s-2 \geq  5  .$ Then    
	\begin{align*}       
	|\mathscr{G}_{2(k-1)+1}| &= \displaystyle{  \multiset{4}{2(k-1)+1}}   - \bigg[    4\multiset{2}{2(k-1)-1}+4\multiset{3}{2(k-1)-2} +  1 \multiset{4}{2(k-1)-3}        \bigg]\\
	&=  \binom{2k+2}{2k-1}   - \bigg[    4 \binom{2k-2}{2k-3}+4 \binom{2k-2}{2k-4} +  1  \binom{2k-2}{2k-5}       \bigg]\\
	&=4k.        
	\end{align*} 
	Similarly, $\displaystyle{|\mathscr{G}_{2(k-i)+1}|=  4(k-i+1)   }$ for $2 \leq i \leq k-2$. Hence      
	\begin{align*}    
	\sdefect(I,s)  &= |\mathscr{G}_{2(k-1)+  1}|+|\mathscr{G}_{2(k-2)+  1}|+\cdots+|\mathscr{G}_{2(2)+  1}| +|\mathscr{G}_{2(1)+  1}|   +  |\mathscr{G}_{1}|\\
	&=  4k + 4(k-1) + \cdots + 4(3) + 8 + 4\\
	&=  4[k + (k-1) + \cdots  + 3 +  2 + 1]\\
	&=2k^2 + 2k.  
	\end{align*}

	Thus by Remark \ref{sd.unicyclic.2}, we can say
	
\begin{equation*}  
    \sdefect({{ I}}, s)= 
    \begin{cases}
    &2k^2 - 1
    \vspace*{0.2cm}~\mbox{if}~  s = 2k \\
	&2k^2  +  2k   ~\mbox{if}~  s = 2k + 1     
	\end{cases}
	\end{equation*}	
	grows as a  quasi-polynomial in $ s $ with quasi-period $2.$
\end{example} 

\begin{remark}
Consider $G$ to be the odd cycle $C_3=(x_1,x_2,x_3).$ Let $ I(C_3)$ be the edge ideal of $C_3.$ Note that $C_3$ is a complete graph $K_3$ on vertices $x_1,x_2$ and $x_3$. Let $ J(K_3)$ be the cover ideal of $K_3.$	Here $I(C_3)=J(K_3).$
We have seen that the quasi-polynomial associated with the symbolic defects  of $ I(C_3)$  obtained by  our procedure as in Remark \ref{sd.unicyclic.2}  coincide with the  quasi-polynomial associated with the symbolic defects  of $ J(K_3)$    obtained by \cite[Theorem 5.7]{drabkin}.

\end{remark}

In the  next remark we see that the symbolic defects of edge ideals of two different  unicyclic graphs are the same under certain conditions.

\begin{remark}\label{remark}
 Let $ G_1 $  and  $G_2$  be  unicyclic graphs     with     unique  cycle  $ C_{2n+1}=(x_1,\ldots,x_{2n+1}). $  Let  $T$  and  $T^{\prime}$  are  the forests  $G_1  \setminus  E(C_{2n+1})$  and $G_2  \setminus  E(C_{2n+1})$,  respectively.  Let    $ {T}_r = V(C_{2n+1}) \cap V(T) = \{ y_1,y_2,\ldots,y_m \}  $  where each $ y_i = x_j $ for some  $ j \in [2n+1]$   and  $ {T}_r^{\prime} = V(C_{2n+1}) \cap V(T^{\prime}) = \{ z_1,z_2,\ldots,z_m \}  $  where each $ z_i = x_j $ for some  $ j \in [2n+1]$.  Let  $|E(T)|=l$  and  $|E(T^{\prime})|=l^{\prime}$.  Let $ l_1,l_2,\ldots,l_m $      be the number of edges connected to those $ m $ roots  $ y_1,y_2,\ldots,y_m   $ in   $ T,  $   respectively. Let $ l_1^{\prime},l_2^{\prime},\ldots,l_m^{\prime} $      be the number of edges connected to those $ m $ roots  $ z_1,z_2,\ldots,z_m   $ in   $ T^{\prime},  $   respectively.  If $l=l^{\prime}, l_1=l_1^{\prime} ,  l_2=l_2^{\prime},\ldots, l_m=l_m^{\prime}  $,  then by Theorem \ref{sd.unicyclic},  the symbolic defects of $I(G_1)$   and  $I(G_2)$  are same.  For example	consider $ G_1 $  and  $G_2$ to be the  unicyclic graphs as in Figure \ref{fig.3}. Note  that  $l = l^{\prime} = 4$ and  $l_1=l_1^{\prime}=2$.  Hence  the  symbolic defects of $I(G_1)$   and  $I(G_2)$  are same.   		    
 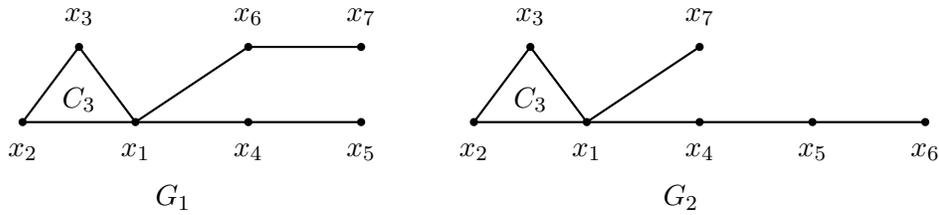
\begin{figure}[!ht]
 	\begin{tikzpicture}[scale=1]
 	\begin{scope}[ thick, every node/.style={sloped,allow upside down}] 
 	\definecolor{ultramarine}{rgb}{0.07, 0.04, 0.56} 
 	\definecolor{zaffre}{rgb}{0.0, 0.08, 0.66}
 	
 	\draw[fill][-,thick] (0,0) --(1.5,0);  
 	\draw[fill][-,thick](0,0) --(0.75,1);
 	\draw[fill][-,thick](0.75,1) --(1.5,0);
 	\draw[fill][-,thick]  (1.5,0) --(3,0);     
 	\draw[fill][-,thick]  (3,0) --(4.5,0);
 	\draw[fill][-,thick]  (1.5,0) --(3,1);     
 	\draw[fill][-,thick]  (3,1) --(4.5,1); 
 	
 	\draw[fill][-,thick] (0,0) circle [radius=0.04];
 	\draw[fill][-,thick] (1.5,0) circle [radius=0.04];
 	\draw[fill][-,thick] (0.75,1) circle [radius=0.04];
 	\draw[fill][-,thick] (3,0) circle [radius=0.04];
    \draw[fill][-,thick] (4.5,0) circle [radius=0.04];
 	\draw[fill][-,thick] (3,1) circle [radius=0.04];
 	\draw[fill][-,thick] (4.5,1) circle [radius=0.04];
 	
 	\node at (0,-0.4) {$x_2$};
 	\node at (1.5,-0.4) {$x_1$};
 	\node at (0.75,1.4) {$x_{3}$};  
 	\node at (3,-0.4) {$x_{4}$};    
 	\node at (4.5,-0.4) {$x_{5}$};
 	\node at (3,1.4) {$x_{6}$};
 	\node at (4.5,1.4) {$x_{7}$};
 	\node at (2,-1) {$G_1$};            
 	
 	\node at (0.75,0.3) {$C_3$};	
 	
 	\draw[fill][-,thick] (6,0) --(7.5,0);  
\draw[fill][-,thick](6,0) --(6.75,1);
\draw[fill][-,thick](6.75,1) --(7.5,0);
\draw[fill][-,thick]  (7.5,0) --(9,0);     
\draw[fill][-,thick]  (9,0) --(10.5,0);
\draw[fill][-,thick]  (7.5,0) --(9,1);     
\draw[fill][-,thick]  (10.5,0) --(12,0); 

\draw[fill][-,thick] (6,0) circle [radius=0.04];
\draw[fill][-,thick] (7.5,0) circle [radius=0.04];
\draw[fill][-,thick] (6.75,1) circle [radius=0.04];
\draw[fill][-,thick] (9,0) circle [radius=0.04];
\draw[fill][-,thick] (10.5,0) circle [radius=0.04];
\draw[fill][-,thick] (9,1) circle [radius=0.04];
\draw[fill][-,thick] (12,0) circle [radius=0.04];

\node at (6,-0.4) {$x_2$};
\node at (7.5,-0.4) {$x_1$};
\node at (6.75,1.4) {$x_{3}$};  
\node at (9,-0.4) {$x_{4}$};    
\node at (10.5,-0.4) {$x_{5}$};
\node at (9,1.4) {$x_{7}$};
\node at (12,-0.4) {$x_{6}$};
\node at (8.75,-1) {$G_2$};            

\node at (6.75,0.3) {$C_3$}; 	
 	
 	\end{scope}
 	\end{tikzpicture}
 	\caption{Two unicyclic graphs $G_1$ and $G_2$ with unique  cycle  $C_3$.}\label{fig.3}
 \end{figure}

\end{remark}

\section{Hilbert Function of Quotient module $I^{(s)}/I^s$}
Let $I \subset R=k[x_1,\ldots,x_n]$ be a homogeneous ideal. Let $\alpha(I)$ denote the degree of least degree monomial of $I$ and set $I_k$ to be the set of elements of $I$ of degree $k$.
Let $M = \displaystyle{\bigoplus_{i=0}^{\infty} M_i}$ be a finitely generated graded $R$-module	
	where $M_i$ denotes the $i$-th graded component of $M.$ Then the Hilbert function of the  $R$-module $M$ is defined as $$ h (d) = \dim_k M_d.$$
	Now consider  $I$ to be the  edge ideal of a unicyclic graph $G$ with a unique odd cycle.
In this section,    we give necessary and sufficient conditions on the structure of $G$   for some power of the maximal ideal to annihilate $ I^{(s)}/I^s$ and  also for those class of graphs, we compute the Hilbert function of the module $I^{(s)}/I^s$. 
\begin{theorem}\label{annhilator}
	Let $I$ be the edge ideal of a unicyclic graph  $G$   with a unique  cycle  $ C_{2n+1}=(x_1,\ldots,x_{2n+1}). $  Let $ s \in \mathbb{N} $ and write $ s = k(n + 1) + r $ for some $ k \in \mathbb{Z} $ and $ 0 \leq  r    \leq  n.$	Assume that $s \geq n+1.$ Let $ \m  $ be the maximal homogeneous ideal of $R$ generated by the elements of $V(G).$ Then $\m^{\gamma(s)} I^{(s)} \subseteq I^s$ for some  $ \gamma(s) $ if and only if  each vertex of $G$  lies in the set $N[V(C_{2n+1})].$  
\end{theorem}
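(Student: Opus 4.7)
The plan is to leverage the explicit expression $I^{(s)} = \sum_{t=0}^{k} I^{s-t(n+1)}(c)^t$ from Lemma~\ref{unicyclic}, where $c = x_1\cdots x_{2n+1}$, together with the key observation that $cv \in I^{n+1}$ for every $v \in N[V(C_{2n+1})]$. Indeed, if $v = x_i \in V(C_{2n+1})$, then $cx_i = x_i^2\prod_{j \neq i} x_j$ factors as a product of $n+1$ edges of $C_{2n+1}$ (using a Type-I minimum edge cover), and if $v \notin V(C_{2n+1})$ is adjacent to some $x_i \in V(C_{2n+1})$, then $cv = (x_iv)\prod_{j \neq i} x_j$ is a product of $n+1$ edges of $G$ (using a Type-II minimum edge cover).

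For the sufficiency direction ($\Leftarrow$), assuming $V(G) = N[V(C_{2n+1})]$, I would show that $\m^{k} I^{(s)} \subseteq I^s$ works. Given any monomial $m = v_1\cdots v_N \in \m^{N}$ with $N \geq k$ and any generator $hc^t$ of $I^{(s)}$ with $0 \leq t \leq k$ and $h \in I^{s-t(n+1)}$, I regroup as
\[
m \cdot hc^t \;=\; (v_1c)(v_2c)\cdots(v_tc)\cdot h \cdot v_{t+1}\cdots v_N \;\in\; I^{t(n+1)}\cdot I^{s-t(n+1)} \;\subseteq\; I^s,
\]
since each $v_jc \in I^{n+1}$ by the key observation. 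Thus $\gamma(s) = k$ suffices.

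For the necessity direction ($\Rightarrow$), I argue contrapositively: supposing some $v^{*} \in V(G)\setminus N[V(C_{2n+1})]$ exists, I would construct an element of $I^{(s)}$ that no power of $\m$ pushes into $I^s$. Because $G$ is unicyclic with unique cycle $C_{2n+1}$, the subgraph $G\setminus V(C_{2n+1})$ is a forest; selecting a vertex $v$ of maximal distance from $C_{2n+1}$ among those not in $N[V(C_{2n+1})]$ yields a leaf of the forest, and since $v$ has no neighbors on $C_{2n+1}$, $v$ is simultaneously a leaf of $G$. Let $w$ be its unique neighbor, and set $f = c\cdot(vw)^{s-(n+1)} \in (c)I^{s-(n+1)} \subseteq I^{(s)}$. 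For any $N \geq 1$, I would show $v^N f \notin I^s$: writing $v^N f = c\cdot v^{N+s-(n+1)}w^{s-(n+1)}$, if this lay in $I^s$ it would factor as a product of $s$ edges of $G$, but $vw$ is the only edge incident to $v$, forcing $vw$ to appear in the factorization at least $N+s-(n+1)$ times, hence contributing at least $N+s-(n+1)$ copies of $w$, which contradicts the fact that only $w^{s-(n+1)}$ is present. Hence $\m^{N} I^{(s)} \not\subseteq I^s$ for every $N$.

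The delicate step is the selection of the witness leaf $v$ and the verification that it is genuinely a leaf of $G$ rather than merely of the non-cycle forest; this is precisely where the unicyclic hypothesis enters, since any vertex outside $N[V(C_{2n+1})]$ has all of its $G$-incident edges inside $G\setminus V(C_{2n+1})$, making forest-leafness and $G$-leafness coincide. With this structural observation and Lemma~\ref{unicyclic} in hand, both implications reduce to the short calculations indicated above.
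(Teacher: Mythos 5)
Your proof is correct, and the sufficiency direction is essentially the paper's: both rest on the observation that $\m c \subseteq I^{n+1}$ when every vertex lies in $N[V(C_{2n+1})]$, so that $\m^{k}$ absorbs all the $(c)^t$ factors in the decomposition of Lemma \ref{unicyclic} and $\gamma(s)=k$ works. The necessity direction, however, takes a genuinely different route. The paper picks an arbitrary vertex $x_{i_j}\notin N[V(C_{2n+1})]$ and the witness $g=x_{i_j}^{\gamma(s)}(x_1x_2)^{s-k(n+1)}c^{k}$ coming from the $t=k$ summand: every variable of $g$ other than $x_{i_j}$ lies on the cycle and $x_{i_j}$ is adjacent to none of them, so no edge in a hypothetical factorization can use $x_{i_j}$, while the remaining part has degree $2s-k<2s$ and hence cannot lie in $I^s$. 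You instead locate a genuine leaf $v$ of $G$ outside $N[V(C_{2n+1})]$ (correctly using unicyclicity to guarantee that a vertex of maximal distance from the cycle is a leaf of $G$), take the $t=1$ witness $c(vw)^{s-(n+1)}$, and count occurrences of $w$. Both work; the paper's choice avoids the structural digression about leaves because its witness monomial is supported entirely on $V(C_{2n+1})\cup\{x_{i_j}\}$, so the ``unpaired variable'' argument needs nothing about the neighbourhood of $x_{i_j}$ beyond the hypothesis itself, whereas your choice needs $vw$ to be the unique edge at $v$ to make the counting clean.

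One small imprecision to repair in your counting: a monomial lies in $I^s$ when it is \emph{divisible by} a product of $s$ edges, not when it equals one. Since $\deg(v^{N}f)=2s+N-1$, the cofactor has degree $N-1$ and can absorb up to $N-1$ copies of $v$, so the claim that $vw$ occurs at least $N+s-(n+1)$ times among the $s$ edges is only valid for $N=1$. The corrected count still succeeds: at least $\bigl(N+s-(n+1)\bigr)-(N-1)=s-n$ of the $s$ edges must contain $v$ and hence equal $vw$, forcing $w^{s-n}$ to divide $v^{N}f$, while the exponent of $w$ in $v^{N}f$ is only $s-n-1$ (as $w\notin V(C_{2n+1})$, so $w\nmid c$). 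The contradiction survives, and with this adjustment your argument is complete.
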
	
	\begin{proof}
 By Lemma \ref{unicyclic} for $ s = k(n + 1) + r  $ where $ k \in \mathbb{Z}  $ and $ 0 \leq  r    \leq  n, $      we have  $${I}^{(s)} =   \displaystyle{ \sum_{t=0}^{k}  {I}^{s-t(n+1)}  (c)^t  }~ \mbox{where}~ c= x_1 \cdots x_{2n+1} .$$		
 Assume that $\m^{\gamma(s)} I^{(s)} \subseteq I^s$ for some  $ \gamma(s) .$
Suppose there  exists a vertex $x_{i_j}$ of $G$ which does not lie in the set $N[V(C_{2n+1})].$ 
Let $ g = x_{i_j}^{\gamma(s)}(x_1x_2)^{s-k(n+1)}  c^k $.	 Then $ g \in  \m^{\gamma(s)}I^{s-k(n+1)}(c)^k$. Here  $ \deg(\frac{g}{ x_{i_j}^{\gamma(s)}}) = \deg((x_1x_2)^{s-k(n+1)}  c^k)    =2(s-k(n+1))+ k(2n+1) =  2s-k < 2s $ because $k \geq 1$  and  so $\frac{g}{ x_{i_j}^{\gamma(s)}} \notin I^s$. 
Note that each  variable  dividing $ \frac{g}{ x_{i_j}^{\gamma(s)}}  $  lies in  $V(C_{2n+1})$.  
Since $x_{i_j} \notin N[V(C_{2n+1})],$  $x_{i_j}$  can  not pair up with any element of  $V(C_{2n+1})$ to get a minimal generator of $I$. 
This implies  $x_{i_j}^{\gamma(s)}$ in $g$ remains unpaired.  Thus  $g = x_{i_j}^{\gamma(s)}. \frac{g}{ x_{i_j}^{\gamma(s)}} \notin I^s$  because  $\frac{g}{ x_{i_j}^{\gamma(s)}} \notin I^s$.
  Therefore    $ \m^{\gamma(s)}I^{s-k(n+1)}(c)^k \nsubseteq I^s.$ Hence $\m^{\gamma(s)} I^{(s)}  = \m^{\gamma(s)}(  \displaystyle{ \sum_{t=0}^{k}  {I}^{s-t(n+1)}  (c)^t  })  \nsubseteq I^s,$ i.e., a contradiction.
\vspace*{0.1cm}\\		
		Assume that each vertex of $G$  lies in the set $N[V(C_{2n+1})].$
   Let $u \in \mathcal{G}(\m c).$ Then $u= x_{i_1}c$ for some $x_{i_1} \in N[V(C_{2n+1})].$ Note that $cx_{i_1} \in \mathcal{G}(I^{n+1}) .$
Thus $ \m c \subseteq I^{n+1}.  $  
Therefore $ \m^{k}  {I}^{s-k(n+1)}  (c)^k  =  {I}^{s-k(n+1)} \m^{k}   (c)^k \subseteq {I}^{s-k(n+1)} I^{k(n+1)} = I^s. $        
Similarly for $1 \leq t \leq k-1, $ $ \m^{k}  {I}^{s-t(n+1)}  (c)^t  =  \m^{k-t}  {I}^{s-t(n+1)} \m^{t}   (c)^t \subseteq \m^{k-t}{I}^{s-t(n+1)} I^{t(n+1)} = \m^{k-t}I^s   \subseteq I^s. $      
This implies that  $ \m^{k}  {I}^{(s)}  = \m^{k}(  \displaystyle{ \sum_{t=0}^{k}  {I}^{s-t(n+1)}  (c)^t  }) \subseteq  I^s. $ Hence $\m^{\gamma(s)} I^{(s)} \subseteq I^s$ for $ \gamma(s) = k .$
\end{proof}   	

In the following remark, we answer question (2) asked by Herzog as noted in the introduction if $I$ is the edge ideal of a unicyclic graph with a unique odd cycle.    
%
\begin{remark}
Let $I$ be the edge ideal of a unicyclic graph $G$ with a unique  cycle  $ C_{2n+1}=(x_1,\ldots,x_{2n+1}) $. Let $ s \in \mathbb{N} $ and write $ s = k(n + 1) + r $ for some $ k \in \mathbb{Z} $ and $ 0 \leq  r    \leq  n.$	Assume that $s \geq n+1.$ Let $ \m  $ be the maximal homogeneous ideal of $R$ generated by the elements of $V(G).$	
By Lemma \ref{unicyclic} for $ s = k(n + 1) + r  $ where $ k \in \mathbb{Z}  $ and $ 0 \leq  r    \leq  n, $      we have  $${I}^{(s)} =   \displaystyle{ \sum_{t=0}^{k}  {I}^{s-t(n+1)}  (c)^t  }~ \mbox{where}~ c= x_1 \cdots x_{2n+1} .$$  
 If there exists some vertex of  $G$ which does not lie  in the set $N[V(C_{2n+1})],$ then by Lemma \ref{annhilator},   
$\m^{\gamma(s)} I^{(s)} \nsubseteq I^s$ for any $ \gamma(s) $. Hence no powers of maximal ideal can be  annihilator of $ I^{(s)}/I^s$ in this case.\\
Now we assume that $ G $ is a unicyclic graph     with a unique  cycle  $ C_{2n+1}=(x_1,\ldots,x_{2n+1}) $ where    each vertex of $G$  lies in the set $N[V(C_{2n+1})]$. By Lemma \ref{annhilator},   
$\m^{k} I^{(s)} \subseteq I^s$. Let $ g = x_{1}^{k-1}(x_1x_2)^{s-k(n+1)}  c^k $. Notice that $ g \in  \m^{k-1}I^{s-k(n+1)}(c)^{k}$. 		Here $ \deg(g) = k-1 + 2(s-k(n+1)) + k(2n+1) = 2s-1$. So   $g \notin I^s$.  
Thus    $ \m^{k-1}I^{s-k(n+1)}(c)^k \nsubseteq I^s.$ This implies $\m^{k-1} I^{(s)}  = \m^{k-1}(  \displaystyle{ \sum_{t=0}^{k}  {I}^{s-t(n+1)}  (c)^t  })  \nsubseteq I^s$.
Therefore $k$ is the least integer such that $\m^{k} I^{(s)} \subseteq I^s.$ Hence 
$ \m ^{ \gamma(s)}. (I^{(s)}/I^s) = 0 $ where $\gamma(s) \geq k$ and  $\gamma(s) $ increases as $s$ increases.
\end{remark}    

\begin{lemma}\label{degree2s}
	
	Let $I$ be the edge ideal of a unicyclic graph $G$     with a unique  cycle  $ C_{2n+1}=(x_1,\ldots,x_{2n+1}) $ where    each vertex of $G$  lies in the set $N[V(C_{2n+1})]$. Let $ s \in \mathbb{N} $ and write $ s = k(n + 1) + r $ for some $ k \in \mathbb{Z} $ and $ 0 \leq  r    \leq  n.$   Then $ I^{s}_{2s}  =    I^{(s)}_{2s}  $.

\end{lemma}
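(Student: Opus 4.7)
The plan is to prove the nontrivial inclusion $I^{(s)}_{2s} \subseteq I^s_{2s}$; the reverse inclusion is immediate from $I^s \subseteq I^{(s)}$. Pick a monomial $g \in I^{(s)}_{2s}$. By Lemma \ref{unicyclic}, $g$ lies in $I^{s-t(n+1)}(c)^t$ for some $0 \leq t \leq k$, where $c = x_1 \cdots x_{2n+1}$. If $t = 0$ then $g$ already lies in $I^s$, so the only case of interest is $t \geq 1$.

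Every monomial of $I^{s-t(n+1)}(c)^t$ has degree at least $2(s-t(n+1)) + t(2n+1) = 2s - t$, and equality with $2s$ forces $g = h \cdot c^t \cdot x_{i_1} \cdots x_{i_t}$ for some $h \in \mathcal{G}(I^{s-t(n+1)})$ and some vertices $x_{i_1}, \ldots, x_{i_t} \in V(G)$. The entire problem therefore reduces to showing $c \cdot x_i \in I^{n+1}$ for every $x_i \in V(G)$, because this yields $c^t \cdot x_{i_1} \cdots x_{i_t} \in I^{t(n+1)}$, whence $g \in I^{s-t(n+1)} \cdot I^{t(n+1)} = I^s$.

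To prove $c \cdot x_i \in I^{n+1}$, I would split on whether $x_i$ lies on the cycle. If $x_i = x_r$ with $r \in [2n+1]$, the factorization
\[
c \cdot x_r = (x_r x_{r+1})(x_{r+2}x_{r+3}) \cdots (x_{r+2n-2}x_{r+2n-1})(x_{r+2n}x_r),
\]
with indices taken mod $2n+1$, exhibits $c \cdot x_r$ as a product of $n+1$ cycle edges. If instead $x_i \notin V(C_{2n+1})$, then the hypothesis $V(G) = N[V(C_{2n+1})]$ supplies a cycle vertex $x_r$ adjacent to $x_i$, and one factors
\[
c \cdot x_i = (x_i x_r)(x_{r+1}x_{r+2})(x_{r+3}x_{r+4}) \cdots (x_{r+2n-1}x_{r+2n}),
\]
a tree edge together with $n$ cycle edges, again $n+1$ minimal generators of $I$ in total.

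The only delicate point is the combinatorial bookkeeping of this pairing step, which is exactly where the hypothesis $V(G) = N[V(C_{2n+1})]$ is used: in the degree-$2s$ slice of $I^{(s)}$ there is no room for an extra variable lying outside $N[V(C_{2n+1})]$, so the construction in Case (1) of Lemma \ref{cont.1} can be applied in a single clean pass without invoking the longer path-walking argument of Case (2).
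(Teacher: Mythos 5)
Your proof is correct and follows essentially the same route as the paper: reduce to the inclusion $I^{(s)}_{2s} \subseteq I^s_{2s}$, use Lemma \ref{unicyclic} and the degree count $2s = 2(s-t(n+1)) + t(2n+1) + t$ to write $g = h\,c^t x_{i_1}\cdots x_{i_t}$, and absorb each factor $c\,x_{i_j}$ into $I^{n+1}$ using the hypothesis $V(G) = N[V(C_{2n+1})]$. The only cosmetic difference is that you exhibit the factorization of $c\,x_i$ into $n+1$ edges explicitly (both factorizations check out), whereas the paper cites Case (1) of Lemma \ref{cont.1} for that fact.
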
    

\begin{proof}
	Since $I^s \subseteq I^{(s)},$ it follows that    $ I^{s}_{2s} \subseteq       I^{(s)}_{2s}  .$\\
	By Lemma \ref{unicyclic} for $ s = k(n + 1) + r  $ where $ k \in \mathbb{Z}  $ and $ 0 \leq  r    \leq  n, $      we have  $${I}^{(s)} =   \displaystyle{ \sum_{t=0}^{k}  {I}^{s-t(n+1)}  (c)^t  }~ \mbox{where}~ c= x_1 \cdots x_{2n+1} .$$  
	
	Let $ g \in I^{(s)}_{2s}.$ Then $ g \in  {I}^{s-t(n+1)}  (c)^t$ for some $t$ where $0 \leq t \leq k.$ Here $ \deg(g) = 2s =  2 (s-t(n+1)) +   t(2n+1)  + t.$ So  $ g = hc^tx_{i_1} \cdots x_{i_t}$
	for some $h \in \mathcal{G}({I}^{s-t(n+1)})$ and $x_{i_j} \in N[V(C_{2n+1})]$ for $1 \leq j \leq t.$
	Note that $cx_{i_{j}}$  $\in \mathcal{G}(I^{n+1})$ for   $1 \leq j \leq t.$     
	Then $ c^t{x_{i_1}\cdots   x_{i_{t}}}
	=(c{x_{i_{1}}})\cdots (c{x_{i_{t}}})  $
	$ 	 \in \mathcal{G}({I^{t(n+1)}}) .$
	Thus
	$ g = hc^t{x_{i_1}\cdots   x_{i_{t}}} $
	$ 	 \in \mathcal{G}(I^{s-t(n+1)}{I^{t(n+1)}})=\mathcal{G}(I^s) .$ Hence $g \in I^{s}_{2s}.$ 
\end{proof}

\begin{theorem}\label{hilbert.thm}
	Let $I$ be the edge ideal of a unicyclic graph $G$     with a unique  cycle  $ C_{2n+1}=(x_1,\ldots,x_{2n+1}) $ where    each vertex of $G$  lies in the set $N[V(C_{2n+1})]$. Let $|E(G) \setminus E(C_{2n+1})| = l.$   Let $ s \in \mathbb{N} $ and write $ s = k(n + 1) + r $ for some $ k \in \mathbb{Z} $ and $ 0 \leq  r    \leq  n.$ Assume that $s \geq n+1.$
	Then the Hilbert function of the module $I^{(s)}/I^s$ is given by 
 $\displaystyle{h(d = 2s-k + i ) = \binom{2n+l + s-(k-i)(n+1) }{s-(k-i)(n+1)}}$   for $0 \leq i \leq k-1,$\\
 $h(d)=0$ otherwise.

\end{theorem}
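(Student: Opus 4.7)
The plan is to compute $\dim_k(I^{(s)}/I^s)_d$ for each degree $d$ using the decomposition
$$I^{(s)} = \sum_{t=0}^{k} I^{s-t(n+1)}(c)^t$$
from Lemma \ref{unicyclic}, where $c = x_1\cdots x_{2n+1}$. A direct computation shows that the minimum degree of an element in the $t$-th summand is $2(s-t(n+1)) + t(2n+1) = 2s-t$, so the minimum degree of $I^{(s)}$ is $2s-k$ while the minimum degree of $I^s$ is $2s$. This splits the analysis into three ranges: $d < 2s-k$, $d \geq 2s$, and $2s-k \leq d \leq 2s-1$. The first range is immediate, since both $(I^{(s)})_d$ and $(I^s)_d$ vanish and therefore $h(d) = 0$.

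For $d \geq 2s$, I will generalize Lemma \ref{degree2s} using the same pairing technique. Any monomial $g \in (I^{s-t(n+1)}(c)^t)_d$ factors as $g = f c^t u$ with $f \in \mathcal{G}(I^{s-t(n+1)})$ and $u$ a monomial of degree $d - (2s-t) \geq t$. Since every variable of $u$ lies in $N[V(C_{2n+1})]$, choosing any $t$ variables $x_{i_1},\ldots,x_{i_t}$ of $u$ yields $cx_{i_\ell} \in I^{n+1}$ for each $\ell$, so that $g \in I^{s-t(n+1)} \cdot I^{t(n+1)} = I^s$. Consequently $h(d) = 0$ for all $d \geq 2s$.

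The substantive case is $2s-k \leq d \leq 2s-1$, which I reparametrize as $d = 2s - t'$ with $t' = k - i$ and $1 \leq t' \leq k$. Since $d < 2s$ we have $(I^s)_d = 0$, so $h(d) = \dim_k (I^{(s)})_d$. I claim that
$$(I^{(s)})_d = (I^{s-t'(n+1)}(c)^{t'})_d.$$
Indeed, summands with $t < t'$ contribute nothing since their minimum degree $2s-t$ exceeds $d$, while a monomial $f c^t u$ in the summand with $t > t'$ has $\deg u = t - t' \geq 1$ and can be rewritten, by pairing $t - t'$ variables of $u$ with $t-t'$ copies of $c$, as an element of $I^{s-t(n+1)} \cdot I^{(t-t')(n+1)} \cdot (c)^{t'} = I^{s-t'(n+1)}(c)^{t'}$.

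Finally, the minimum-degree component $(I^{s-t'(n+1)}(c)^{t'})_{2s-t'}$ is $k$-linearly spanned by $\{\, f \cdot c^{t'} : f \in \mathcal{G}(I^{s-t'(n+1)}) \,\}$; these are pairwise distinct monomials (the map $f \mapsto fc^{t'}$ is injective on distinct monomials), hence linearly independent. Invoking Remark \ref{unique3} gives
$$h(2s - t') \;=\; |\mathcal{G}(I^{s-t'(n+1)})| \;=\; \binom{2n+l+s-t'(n+1)}{s-t'(n+1)},$$
and substituting $t' = k-i$ recovers the claimed formula. The main delicate step is justifying the collapse of all summands of $I^{(s)}$ with index $t \geq t'$ into the single summand indexed by $t'$ in the relevant degree; this hinges decisively on the hypothesis $V(G) \subseteq N[V(C_{2n+1})]$, which allows any ``extra'' variable to be absorbed into a copy of $c$ so as to produce an element of $I^{n+1}$.
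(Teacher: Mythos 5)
Your proof is correct and follows essentially the same route as the paper: the decomposition $I^{(s)}=\sum_{t=0}^{k}I^{s-t(n+1)}(c)^{t}$ from Lemma \ref{unicyclic}, the degree count $\alpha(I^{s-t(n+1)}(c)^{t})=2s-t$, the absorption of neighbouring variables via $cy\in\mathcal{G}(I^{n+1})$ for $y\in N[V(C_{2n+1})]$ to collapse the summands with larger $t$ into the one indexed by $t'=k-i$, and the final count through Remark \ref{unique3}. Your explicit treatment of all degrees $d\geq 2s$ (rather than only $d=2s$ as in Lemma \ref{degree2s}) is a small but welcome tightening of the paper's argument.
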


\begin{proof}
	Since $|E(G) \setminus E(C_{2n+1})| = l,$  $|E(G)| = 2n + 1 + l.$ By Lemma \ref{unicyclic} for $ s = k(n + 1) + r  $ where $ k \in \mathbb{Z}  $ and $ 0 \leq  r    \leq  n, $      we have  $${I}^{(s)} =   \displaystyle{ \sum_{t=0}^{k}  {I}^{s-t(n+1)}  (c)^t  }~ \mbox{where}~ c= x_1 \cdots x_{2n+1} .$$
	Here $I^{(s)}/I^s \cong    \displaystyle{ \sum_{t=1}^{k}  {I}^{s-t(n+1)}  (c)^t  }(\mbox{mod}~ I^s).$ Since $\deg(c) = 2n + 1,$ $\alpha(I^{s-t(n+1)}(c)^t)= 2 (s-t(n+1)) + t(2n+1) = 2s-t $ for $1 \leq t \leq k.$
	If $t_1 \leq t_2,$ then 
	$\alpha(I^{s-t_2(n+1)}(c)^{t_2}) \leq\alpha(I^{s-t_1(n+1)}(c)^{t_1}) .$ Thus $\alpha(I^{(s)}/I^s) = \alpha(I^{s-k(n+1)}(c)^k) = 2s-k .$ So the Hilbert function of  $I^{(s)}/I^s$ is zero in degrees    $ < \alpha(I^{(s)}/I^s)= 2s-k .$
	Also by Lemma \ref{degree2s},   
	the Hilbert function of  $I^{(s)}/I^s$ is zero in degrees  $\geq  2s.$
	\vspace*{0.1cm}\\
Let $ g \in [{I}^{s-k(n+1)}  (c)^k]_{ 2s-k +r} $  for some $r \geq 0$ such that $g \notin I^s.$ Here $ \deg(g) = 2s-k +r =  2 (s-k(n+1)) +   k(2n+1)  + r.$ So $g = h c^k x_{i_1}\cdots x_{i_r}$ where $h \in \mathcal{G}(I^{s-k(n+1)})$ and each $x_{i_j} \in N[V(C_{2n+1})].$ Note that $cx_{i_j} \in \mathcal{G}({I}^{n+1})$ for  $1 \leq j \leq r.$
	If $r \geq k,$ then 
	\begin{align*}
	g &= h c^k (x_{i_1}\cdots x_{i_k}) (x_{i_{k+1}}\cdots x_{i_r} )\\
	&= h [(c x_{i_1})\cdots (cx_{i_k})] (x_{i_{k+1}}\cdots x_{i_r} )\\
	&  \in {I}^{s-k(n+1)}{I}^{k(n+1)} (x_{i_{k+1}}\cdots x_{i_r} ) \subseteq I^s,~\mbox{	i.e., a contradiction.}
	\end{align*}
 
	So $r < k.$ Then 
	$g = h c^k   (x_{i_1}\cdots x_{i_r})
	= h c^r   (x_{i_1}\cdots x_{i_r})c^{k-r}= h [(cx_{i_1})\cdots (cx_{i_r})]c^{k-r} .$  
	Since $ h \in   \mathcal{G}(I^{s-k(n+1)}) $ and 
	$(cx_{i_1})\cdots (cx_{i_r}) \in \mathcal{G}({I}^{r(n+1)})  ,$ 
	$h [(c x_{i_1})\cdots (cx_{i_k})]   \in \mathcal{G}({I}^{s-k(n+1)}I^{r(n+1)}) =  \mathcal{G}({I}^{s-(k-r)(n+1)}).$ Thus $ g  \in    \mathcal{G}({I}^{s-(k-r)(n+1)}(c)^{k-r})$
	where $ 1 \leq k-r  \leq k.$
	\vspace*{0.1cm}\\
Therefore  multiple of any element of the set $  [{I}^{s-k(n+1)}  (c)^k]_{ 2s-k }  $ of degree $\geq  2s-k $ not lying in $I^s$ must lie in some set $ \mathcal{G}({I}^{s-t(n+1)}(c)^{t}) $  where $1 \leq t \leq k.$
	\vspace*{0.1cm}\\
	Similarly, the multiple of any element of the set $  [{I}^{s-p(n+1)}  (c)^p]_{2s-p }  $  of degree $\geq  2s-p $ for  $1 \leq p \leq k-1$ not lying   in $I^s$ must lie in   some    $ \mathcal{G}({I}^{s-t(n+1)}(c)^{t})  $  where $1 \leq t \leq p.$ Thus the set of elements of ${I}^{(s)}/I^{s}$ of degree $ 2s-t  $ is $ \mathcal{G}({I}^{s-t(n+1)}(c)^{t})~(\mbox{mod } I^s)    $ for  $1 \leq t \leq k,$  
i.e.,
	$[{I}^{(s)}/I^{s}]_{2s-(k - i) }    
	= \mathcal{G}(I^{s-(k-i)(n+1)} (c)^{(k-i)})~(\mbox{mod } I^s)$ for $ 0\leq i\leq k-1.$
%
%
	 Hence by Remark \ref{unique3} for $0 \leq i \leq k-1$,  we have
	\begin{align*}
	h( 2s-k + i) &= h(2s-(k - i))\\
	&= \big|[{I}^{(s)}/I^{s}]_{ 2s-(k - i)}\big|\\    
	&= \big|\mathcal{G}(I^{s-(k-i)(n+1)} (c)^{(k-i)})\big|\\
	&= \big|\mathcal{G}(I^{s-(k-i)(n+1)} )\big|\\
	&=\multiset{2n+1+l}{s-(k-i)(n+1)}\\
	&=  \binom{2n+l + s-(k-i)(n+1) }{s-(k-i)(n+1)}. 
	\end{align*}

\end{proof}

\begin{remark}
	 Let $ G_1 $  and  $G_2$  be  unicyclic graphs     with     unique  cycle  $ C_{2n+1}=(x_1,\ldots,x_{2n+1}). $ Let $I_1 = I(G_1)$ and  $I_2 = I(G_2)$.  Let  $T$  and  $T^{\prime}$  are  the forests  $G_1  \setminus  E(C_{2n+1})$  and $G_2  \setminus  E(C_{2n+1})$,  respectively.    Let  $|E(T)|=l$  and  $|E(T^{\prime})|=l^{\prime}$.   If $l=l^{\prime}$, then by Theorem \ref{hilbert.thm},  the Hilbert  functions of ${I_1}^{(s)}/{I_1}^{s}$   and  ${I_2}^{(s)}/{I_2}^{s}$  are equal for $s \geq n+1$.  For example	consider $ G_1 $  and  $G_2$ to be the  unicyclic graphs  in Figure \ref{fig.4}. Note  that  $l = l^{\prime} = 4$.  Hence  the  Hilbert  functions of ${I_1}^{(s)}/{I_1}^{s}$   and  ${I_2}^{(s)}/{I_2}^{s}$  are same for $s \geq 2$.   		    
	\begin{figure}[!ht]
		\begin{tikzpicture}[scale=1]
		\begin{scope}[ thick, every node/.style={sloped,allow upside down}] 
		\definecolor{ultramarine}{rgb}{0.07, 0.04, 0.56} 
		\definecolor{zaffre}{rgb}{0.0, 0.08, 0.66}
		
		\draw[fill][-,thick] (0,0) --(1.5,0);  
		\draw[fill][-,thick](0,0) --(0.75,1);
		\draw[fill][-,thick](0.75,1) --(1.5,0);
		\draw[fill][-,thick]  (1.5,0) --(3,0);     
		\draw[fill][-,thick]  (3,0) --(4.5,0);
		\draw[fill][-,thick]  (0.75,1) --(2.25,1);     
		\draw[fill][-,thick]  (2.25,1) --(3.75,1); 
		
		\draw[fill][-,thick] (0,0) circle [radius=0.04];
		\draw[fill][-,thick] (1.5,0) circle [radius=0.04];
		\draw[fill][-,thick] (0.75,1) circle [radius=0.04];
		\draw[fill][-,thick] (3,0) circle [radius=0.04];
		\draw[fill][-,thick] (4.5,0) circle [radius=0.04];
		\draw[fill][-,thick] (2.25,1) circle [radius=0.04];
		\draw[fill][-,thick] (3.75,1) circle [radius=0.04];
		
		\node at (0,-0.4) {$x_2$};
		\node at (1.5,-0.4) {$x_1$};
		\node at (0.75,1.4) {$x_{3}$};  
		\node at (3,-0.4) {$x_{4}$};    
		\node at (4.5,-0.4) {$x_{5}$};
		\node at (2.25,1.4) {$x_{6}$};
		\node at (3.75,1.4) {$x_{7}$};
		\node at (2,-1) {$G_1$};            
		
		\node at (0.75,0.3) {$C_3$};	
		
		\draw[fill][-,thick] (6,0) --(7.5,0);  
		\draw[fill][-,thick](6,0) --(6.75,1);
		\draw[fill][-,thick](6.75,1) --(7.5,0);
		\draw[fill][-,thick]  (7.5,0) --(9,0);     
		\draw[fill][-,thick]  (9,0) --(10.5,0);
		\draw[fill][-,thick]  (7.5,0) --(9,1);     
		\draw[fill][-,thick]  (10.5,0) --(12,0); 
		
		\draw[fill][-,thick] (6,0) circle [radius=0.04];
		\draw[fill][-,thick] (7.5,0) circle [radius=0.04];
		\draw[fill][-,thick] (6.75,1) circle [radius=0.04];
		\draw[fill][-,thick] (9,0) circle [radius=0.04];
		\draw[fill][-,thick] (10.5,0) circle [radius=0.04];
		\draw[fill][-,thick] (9,1) circle [radius=0.04];
		\draw[fill][-,thick] (12,0) circle [radius=0.04];
		
		\node at (6,-0.4) {$x_2$};
		\node at (7.5,-0.4) {$x_1$};
		\node at (6.75,1.4) {$x_{3}$};  
		\node at (9,-0.4) {$x_{4}$};    
		\node at (10.5,-0.4) {$x_{5}$};
		\node at (9,1.4) {$x_{7}$};
		\node at (12,-0.4) {$x_{6}$};
		\node at (8.75,-1) {$G_2$};            
		
		\node at (6.75,0.3) {$C_3$}; 	
		
		\end{scope}
		\end{tikzpicture}
		\caption{Two unicyclic graphs $G_1$ and $G_2$ with unique  cycle  $C_3$.}\label{fig.4}
	\end{figure}
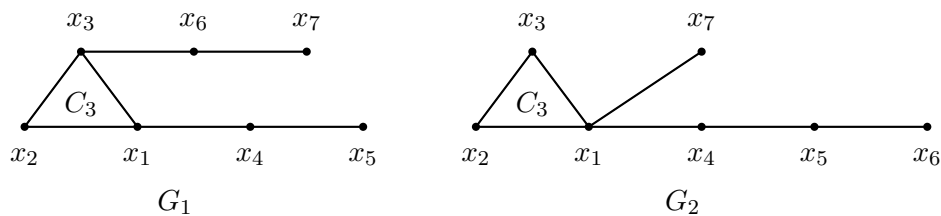

\end{remark}

\textbf{Acknowledgements}
\vspace*{0.15cm}\\
  We would like to thanks the anonymous referee for his/her helpful  suggestions and pointing out the example in Remark \ref{remark}. The first author was supported by SERB (grant No.: EMR/2016/006997), India.

\end{document}